\documentclass[notitlepage,11pt,a4paper]{scrartcl}

\usepackage[UKenglish]{babel}
\selectlanguage{UKenglish}

\usepackage{abstract}
\usepackage[utf8]{inputenc}
\usepackage{amsmath}
%%%%%%%%%%Für Referenzgespringe im pdf:
%\usepackage[hyperref,colorlinks=false,bookmarks=true,hyperfootnotes=false]{hyperref}
\usepackage{amsfonts,amssymb,latexsym,exscale}
\usepackage{xspace}
\usepackage{amsthm}
\usepackage{txfonts}
\usepackage{upgreek}
\usepackage{tikz}
\usetikzlibrary{matrix,arrows,decorations.pathreplacing}
\usepackage{tensor} %tensor ist gut für Indizes links, rechts oben unten usw.
\usepackage{stmaryrd} %stmaryrd ist gut für kleine Pfeile nach rechts unten
\usepackage{mathtools} %soll für speziellere underbrackets* und co sein
\usepackage{url}
%\usepackage{array}
%-------------------------------------------------------------------
%-------------------------------------------------------- Umgebungen
%-------------------------------------------------------------------
\newtheorem{theo}{Theorem}[section]
	
	\newtheorem{main}[theo]{Main Theorem}
	\newtheorem{cor}[theo]{Corollary}
	\newtheorem*{cor*}{Corollary}	

	\newtheorem{lem}[theo]{Lemma}
	\newtheorem*{lem*}{Lemma}
	
	\newtheorem{prop}[theo]{Proposition}
	\newtheorem*{prop*}{Proposition}
	\newtheorem{df}[theo]{Definition}
	\newtheorem*{df*}{Definition}

	\newtheorem*{Example*}{Example}
 \newtheorem*{rem*}{Remark}
 \newtheorem{rem}{Remark}  
\numberwithin{rem}{theo}

\numberwithin{equation}{section}

\numberwithin{theo}{section}

\makeatletter
\@addtoreset{rem}{section}
\makeatother
%------------------------------------------------------------
%---------------------Nun die DeclareMathOperator-Geschichten
%------------------------------------------------------------
\DeclareMathOperator{\ldef}{\mathrel{\mathop:}=}

\DeclareMathOperator{\rdef}{=\mathrel{\mathop:}}

\DeclareMathOperator{\im}{\mathrm{im}} 
\DeclareMathOperator{\coker}{\mathrm{coker}}
\DeclareMathOperator{\NN}{\mathbb{N}} 

\DeclareMathOperator{\CC}{\mathbb{C}}

\DeclareMathOperator{\id}{\mathrm{id}}

\DeclareMathOperator{\B}{\mathcal{B}} 

\DeclareMathOperator{\E}{E}

\DeclareMathOperator{\I}{\mathfrak{I}}

\DeclareMathOperator{\konk}{\centerdot}
\DeclareMathOperator{\ohne}{\!\setminus}
\DeclareMathOperator{\ru}{\leftharpoonup}
\DeclareMathOperator{\kleint}{\prec}
\DeclareMathOperator{\nkleint}{\nprec}
\DeclareMathOperator{\groint}{\succ}
\DeclareMathOperator{\wa}{\relbar\relbar}
\DeclareMathOperator{\stver}{\,\rightrightarrows\,} 
\DeclareMathOperator{\nstver}{\rightrightarrows\!\!\!\!\! \left\vert\right. \;} 
\DeclareMathOperator{\Po}{\mathcal{P}} 
\DeclareMathOperator{\W}{{W}} 
\DeclareMathOperator{\tW}{\widetilde{\W}}
\DeclareMathOperator{\Q}{\widetilde{Q}} 
\DeclareMathOperator{\K}{\widetilde{K}} 

 \newcommand{\case}[1]{\underline{\smash{#1}\rule[-0.15ex]{0pt}{1ex}}\quad}
 \newcommand{\pot}[1]{\ensuremath{\wp(#1)}}
 \newcommand{\mo}[1]{\ensuremath{#1\text{-}\mathrm{mod}}}
 \newcommand{\gen}[1]{\ensuremath{\mathrm{gen}\textit{-}#1}}
 \newcommand{\rep}[2]{\ensuremath{#2\text{-}\tensor*[_{#1}]{\mathrm{rep}}{}}}
 \newcommand{\Inc}[1]{\ensuremath{\mathrm{Inc}\left(#1\right)}}
 \newcommand{\Ext}[3]{\ensuremath{\mathrm{Ext}_{#1}^{#2}\!\left(#3\right)}}
 \newcommand{\Hom}[2]{\ensuremath{\mathrm{Hom}_{#1}\!\left(#2\right)}}
 \newcommand{\End}[2]{\ensuremath{\mathrm{End}_{#1}\!\left(#2\right)}}

 \newcommand{\rad}[1]{\ensuremath{\mathrm{rad}({#1})}}
 \newcommand{\extko}[1]{\ensuremath{\Gamma(#1)}}
 \newcommand{\set}[2]{\ensuremath{\{#1\:|\;#2\}}}
 
 \newcommand{\Ann}[2]{\ensuremath{\mathrm{Ann}_{#1}(#2)}}
 \newcommand{\kong}[1]{\ensuremath{\underset{#1}{\cong}}} 
 \newcommand{\itonew}[1]{\ensuremath{{\underline{#1\,}{\raise-0.4ex\hbox{\vphantom{$#1$ }\rule[-0.114ex]{0.125ex}{1.5ex}}}}\, }}
 \newcommand{\ito}[1]{\ensuremath{\itonew{#1}}}
 \newcommand{\oto}[1]{\ensuremath{\itonew{#1}\cup\{0\}}}
 \newcommand{\A}[1]{\ensuremath{\mathcal{A}_{#1}}} 
 \newcommand{\subf}[1]{\ensuremath{\mathsf{t}_{#1}}} 
 \newcommand{\p}[1]{\ensuremath{\mathsf{P}\!_{#1}}} 
 
 \newcommand{\monoid}[1]{\ensuremath{\uppi_{#1}}} 
 \newcommand{\free}[1]{\ensuremath{\tensor*{#1}{*^{*}_{0}}}} 
 \newcommand{\senke}[2]{\ensuremath{\mathrm{S}_{#2}{#1}}} 
 \newcommand{\y}[2]{\ensuremath{\tensor*{y}{*^{(#2)}_{#1}}}}
 \newcommand{\yJ}[1]{\ensuremath{y(#1)}} 
 \newcommand{\Jk}[2]{\ensuremath{\tensor*{\I}{*^{(#2)}_{#1}}}}
 \newcommand{\z}[2]{\ensuremath{\tensor*{z}{*^{(#2)}_{#1}}}} 
 \newcommand{\g}[2]{\ensuremath{\tensor*{g}{*^{(#2)}_{#1}}}} 
 \newcommand{\N}[2]{\ensuremath{\tensor*{N}{*^{(#2)}_{#1}}}} 
 \newcommand{\f}[2]{\ensuremath{\tensor*{f}{*^{(#2)}_{#1}}}} 
 \newcommand{\m}[2]{\ensuremath{\tensor*{\mathsf{m}}{*^{(#2)}_{[#1]}}}} 
 \newcommand{\U}[2]{\ensuremath{\tensor*{U}{*^{(#2)}_{#1}}}} 
  
 \newcommand{\kln}[1]{\ensuremath{\leq_{#1}}}
 \newcommand{\grn}[1]{\ensuremath{\geq_{#1}}}

 \newcommand{\vgrn}[1]{\rotatebox[origin=c]{180}{\ensuremath{\lessdot}}}

\pagestyle{plain}
\hyphenpenalty=3000
\tolerance=500
\begin{document}
\parindent0pt
\title{Monoid algebras of projection functors}
\author{Anna-Louise Grensing\\ Fakult\"at f\"ur Mathematik\\ Universit\"at Bielefeld\\ D - 36615 Bielefeld\\ agrensin@math.uni-bielefeld.de}
\maketitle
\begin{abstract} We study the monoid of so called projection functors $\p{S}$ attached to simple modules $S$ of a finite dimensional algebra, which appear naturally in the study of torsion pairs. We determine defining relations in special cases of path algebras. For the linearly oriented Dynkin quiver of Type $A$, we get an isomorphism to the monoid of non-decreasing parking functions. Moreover we give an explicit isomorphism between the monoid algebra of non-decreasing parking functions and a certain incidence algebra independent of the field.
\end{abstract}
%\pagestyle{scrheadings}
%\ifoot[\normalfont\tiny\today\currenttime]{\normalfont\tiny\today\currenttime}
%\ifoot{\normalfont\tiny\today\ \currenttime}
%\pagestyle{useheadings}
%\deftripstyle{draft}{}{}{\chaptermark}{\normalfont\tiny\today\currenttime}{}{\pagemark}
%\deftripstyle{draft}{}{}{\thesection}{\normalfont\tiny\today\currenttime}{}{\pagemark}
%\pagestyle{daft}
%\pagenumbering{arabic}
%\setcounter{page}{1}
\section{Introduction}
The study of $\mathrm{Hom}$-orthogonal subcategories is a classical tool in the representation theory of finite dimensional algebras. The process of passing from the module category to orthogonal subcategories is essential as pointed out for example in \cite{GeigleLenzing}, \cite{Schofield}. 
Here the so called projection functors appear naturally. We start by defining them for arbitrary modules $U$ over  an associative finite dimensional unital algebra $A$ over a field $k$. We denote by $\mo{A}$ the category of finite dimensional left $A$-modules and by $\gen{U}$ its full subcategory which consists of those modules isomorphic to a quotient of some $U^{\oplus d}$. We first define the endofunctor $\subf{U}$ on $\mo{A}$ by sending a module $M$ to its greatest submodule $\subf{U}(M)$ which lies in $\gen{U}$. This gives a subfunctor of the identity functor $\mathrm{id}_{\mo{A}}$, i.e.\ the embeddings $\iota_M$ of $\subf{U}(M)$ into $M$ for every module $M$ yield a natural transformation $\iota\colon \subf{U} \longrightarrow \mathrm{id}_{\mo{A}}$.
We obtain the so called projection functor $\p{U}\colon\mo{A}\longrightarrow \mo{A}$ by passing to the cokernel $(\pi_M)_{M\in \mo{A}}\colon \mathrm{id}_{\mo{A}}\longrightarrow \p{U}$ of $\iota$. 

The question of describing the relations between such functors arises naturally. It fits into the general categorification programmme of realizing Lie-theoretic objects as functors on module categories \cite{Mazorchuk}.  

We concretrize this and concentrate just on the multiplicative interplay between certain projection functors. To create the framework we consider the monoid $\monoid{A}$ generated by \mbox{$\set{\p{S}}{S \text{simple}}$} up to natural isomorphism. For this we recall that the composition of endofunctors on $\mo{A}$ induces a multiplication on the isomorphism classes of endofunctors. Hence $\monoid{A}$ is the set of the isomorphism classes $[\p{S_1}\p{S_2}\ldots\p{S_r}]$ with $r\geq 0$ and $S_1,S_2,\ldots, S_r$ simple $A$-modules together with that multiplication. In this sense we speak of monomials over $\set{\p{S}}{S \text{simple }}$, omit the brackets indicating the isomorphism classes and write $1$ for the isomorphism class of the identity functor.
To study this monoid and its monoid algebra a good way to start is finding a set of defining relations. We give some relations:
\begin{prop}\label{relations proposition}
 Let $S$ and $T$ be simple $A$-modules without any non-trivial self-extensions. Then the following relations hold:
 \begin{itemize}
  \item[(a)] $\p{S}\circ\p{S} = \p{S}$
  \item[(b)] If $\Ext{A}{1}{T,S}=0$ holds, we have $\p{S}\circ\p{T}\circ \p{S} = \p{T}\circ\p{S}\circ\p{T}= \p{S}\circ\p{T}$.
  \end{itemize}
\end{prop}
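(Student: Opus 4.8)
The plan is to first make $\p{S}$ and $\p{T}$ completely explicit and extract a single computational lemma that drives both parts. Since $S$ has no non-trivial self-extensions, $\gen{S}$ coincides with $\add{S}$, the class of $S$-isotypic semisimple modules, so $\subf{S}(M)$ is exactly the $S$-isotypic part of $\Soc{M}$ and $\p{S}(M)=M/\subf{S}(M)$; the same holds for $T$. The lemma I would record is: for any submodule $K\subseteq\subf{S}(M)$ one has $\subf{S}(M/K)=\subf{S}(M)/K$. Indeed, a copy of $S$ sitting in $\Soc(M/K)$ lifts to a submodule $P\supseteq K$ with $P/K\cong S$, and the class of $0\to K\to P\to S\to 0$ lies in $\Ext{A}{1}{S,K}$, which vanishes because $K$ is $S$-isotypic and $\Ext{A}{1}{S,S}=0$; the sequence therefore splits, so $P$ contains a genuine copy of $S$ already present in $M$ and no new $S$ is created. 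Taking $K=\subf{S}(M)$ gives $\subf{S}(\p{S}(M))=0$, which is precisely relation~(a).

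For~(b) I may assume $S\not\cong T$, as the case $S\cong T$ reduces to~(a). The key intermediate step is the identity
\[ \p{T}\circ\p{S}(M)\;=\;M\big/\bigl(\subf{S}(M)+\subf{T}(M)\bigr), \]
asserting that applying $\p{T}$ after $\p{S}$ merely deletes the combined $(S\oplus T)$-isotypic socle. This is the point at which the hypothesis $\Ext{A}{1}{T,S}=0$ is used, and getting the direction of the $\mathrm{Ext}$-vanishing right is the crux of the argument. A $T$ appearing in $\Soc\bigl(M/\subf{S}(M)\bigr)$ lifts to $0\to\subf{S}(M)\to P\to T\to 0$, whose class lies in $\Ext{A}{1}{T,\subf{S}(M)}\cong\Ext{A}{1}{T,S}^{\oplus a}$; this vanishes under our assumption, so the extension splits and the $T$ was already a submodule of $M$. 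Hence no new $T$ is produced, which yields the displayed formula after identifying $\subf{T}\bigl(M/\subf{S}M\bigr)=(\subf{S}M+\subf{T}M)/\subf{S}M$.

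Both required equalities then follow formally. For $\p{T}\circ\p{S}\circ\p{T}=\p{S}\circ\p{T}$ I would apply the intermediate identity to $\bar M=\p{T}(M)$: since $\subf{T}(\bar M)=0$ by~(a) applied to $T$, the combined socle reduces to $\subf{S}(\bar M)$ and $\p{T}\circ\p{S}(\bar M)=\bar M/\subf{S}(\bar M)=\p{S}(\bar M)=\p{S}\circ\p{T}(M)$. For $\p{S}\circ\p{T}\circ\p{S}=\p{S}\circ\p{T}$ I would regroup as $\p{S}\circ(\p{T}\circ\p{S})$, substitute the intermediate identity to reach $\p{S}\bigl(M/(\subf{S}M+\subf{T}M)\bigr)$, and then invoke the driving lemma once more — now applied to the $S$-isotypic submodule $K=(\subf{S}M+\subf{T}M)/\subf{T}M$ of $M/\subf{T}M$ — to collapse this back to $\p{S}\bigl(M/\subf{T}M\bigr)=\p{S}\circ\p{T}(M)$.

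Finally I would observe that every isomorphism above is the canonical map furnished by the third isomorphism theorem and induced by $\id_M$, hence natural in $M$, so that the identities hold in the monoid $\monoid{A}$ of isomorphism classes as stated. I expect the main obstacle to be conceptual rather than computational: the hypotheses are deliberately asymmetric, assuming only $\Ext{A}{1}{T,S}=0$ and not $\Ext{A}{1}{S,T}=0$, so $\p{S}\circ\p{T}$ itself is \emph{not} symmetric in $S$ and $T$. One must therefore resist treating the two sides symmetrically and instead single out $\p{S}\circ\p{T}$ as the common value, verifying that each of the two triple composites deletes exactly the same submodule of $M$.
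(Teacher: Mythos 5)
Your argument is correct, and its backbone is the same as the paper's: both proofs rest on identifying $\p{T}\circ\p{S}(M)$ with the quotient of $M$ by the combined socle $\subf{S}M+\subf{T}M$ (the paper states this as a natural isomorphism $\p{T}\circ\p{S}\cong\p{S\oplus T}$, which is the same thing since $\subf{S\oplus T}M=\subf{S}M\oplus\subf{T}M$), and both invoke $\Ext{A}{1}{T,S}=0$ at exactly one point, namely to show that no new copy of $T$ appears in the socle of $M/\subf{S}M$. Where you genuinely diverge is in the machinery. The paper proves the key identification via the natural isomorphism $\subf{T}\cong T\otimes_{\End{A}{T}}\Hom{A}{T,\_}$ together with $\Hom{A}{T,M}\cong\Hom{A}{T,\p{S}M}$, and it handles the step $\p{S}\circ\p{S\oplus T}\cong\p{S}\circ\p{T}$ by introducing a natural transformation $\alpha\colon\p{T}\longrightarrow\p{S\oplus T}$ and running the snake lemma on the diagram of $\alpha_M$ induced by $\p{S}$. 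You instead isolate the single reusable lemma $\subf{S}(M/K)=\subf{S}(M)/K$ for $K\subseteq\subf{S}M$, proved by lifting a copy of $S$ to an extension whose class lies in $\Ext{A}{1}{S,K}=0$, and apply it twice: with $K=\subf{S}M$ to get (a), and with $K=(\subf{S}M+\subf{T}M)/\subf{T}M$ inside $\p{T}M$ to collapse $\p{S}\circ\p{T}\circ\p{S}$ to $\p{S}\circ\p{T}$. This buys a more elementary and more uniform argument in which every step is explicit submodule bookkeeping and naturality is immediate because all identifications are induced by $\id_M$; what it gives up is the intermediate functor $\p{S\oplus T}$, which the paper's formulation makes available as a statement in its own right (an if-and-only-if criterion for $\p{T}\circ\p{S}\cong\p{S\oplus T}$). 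One cosmetic remark: the equality $\gen{S}=\add{S}$ holds for any simple $S$, since a quotient of $S^{\oplus d}$ is automatically $S$-isotypic semisimple; the vanishing of self-extensions is not needed there, only in your driving lemma.
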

As a consequence of the second relations we get $\p{S}\circ\p{T}= \p{T}\circ\p{S}$ if $\Ext{A}{1}{T,S}=0=\Ext{A}{1}{S,T}$ holds. Thus certain generators of $\monoid{A}$ satisfy the braid relations of type $A$.

Now if $A$ is the path algebra $kQ$ of a finite acyclic quiver $Q=(Q_0,Q_1)$ over $k$, then each simple $kQ$-module $S_t$ (attached to the vertex $t$) has no non-trivial self-extensions and hence the proposition holds for all generators of $\monoid{Q}\ldef \monoid{kQ}$. We conjecture that these relations are defining for $\monoid{Q}$.
To determine whether the above relations are defining ones we compare $k\monoid{Q}$ with the algebra $B_Q$, which we define by generators $X_t$ with $t\in Q_0$ and the following relations: \begin{itemize}
 \item $X_s^2= X_s$ for all $s\in Q_0$
 \item $X_sX_tX_s=X_tX_sX_t$ for all $s,t \in Q_0$
 \item $X_tX_sX_t= X_sX_t$ for all $s,t \in Q_0$, such that there is no arrow from $t$ to $s$ 
\end{itemize}
There is an epimorphism $\psi_Q$ of algebras from $\B_Q$ onto $k\monoid{Q}$ with $X_t\mapsto \p{S_t}\rdef \p{t}$ for all $t\in Q_0$ by the above relations of the projection functors since the dimension of $\Ext{kQ}{1}{S_t,S_s}$ coincides with the number of arrows from $t$ to $s$. Note that if this epimorphism $\psi_Q$ is an isomorphism, then $\monoid{Q}$ is isomorphic to the Hecke-Kiselman semigroup associated with $Q$ introduced in \cite{MazorchukKiselman}. We introduce a method to detect when $\psi$ is an isomorphism in section 3. So far we applied it successfully to tree quivers with a specific orientation including bipartite tree quivers, $m$-subspace quivers, star quivers and each Dynkin quiver of type $A$ as well as to a couple of families of symmetrically shaped quivers. In this article we just discuss the former ones (see Prop.\ \ref{Special cases  prop normal form of B_n lin.oriented} and Theo.\ \ref{Special cases theo normal form of admissable trees}). As the relations are independent of the number of arrows unless there are none, one expects that multiple arrows have no impact. Moreover the relations are local as they just take direct neighbourhoods into account. We treat these aspects in section 3 for $\B_Q$ where it is obvious and for $k\monoid{Q}$.   

In section 4 we study the algebra $\B_Q$ and compute its Gabriel quiver. By definition its underlying monoid is the Hecke-Kiselman semigroup associated with $Q$. These algebras emerge as finite dimensional (see Cor.\ \ref{Special_cases Cor finite}) and basic (see Prop.\ \ref{extkoecher prop Radikal und alle Einfachen}) regardless of the representation type of $Q$. The Gabriel quiver can be described using the combinatorics given by the shape of the original quiver. 
\begin{theo}\label{extkoecher-theo-Extkoecher}
The simple modules $E_M$ of $\B_Q$, hence the vertices of the Gabriel quiver $\extko{\B_Q}$, are parametrized by the subsets $M$ of $Q_0$. Moreover there is at most one arrow between two vertices. More precisely we have $[\E_M]\rightarrow [\E_N] \in \extko{\B_Q}$ for two subsets $M,N$ of $Q_0$ if and only if $M\ohne N$ and $N\ohne M$ are non-empty and if for each pair $(m,n)\in M\ohne N\times N\ohne M$ there is an arrow $m\rightarrow n$ in $Q$. 
\end{theo}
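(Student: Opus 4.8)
The plan is to exploit that $\B_Q$ is basic and finite-dimensional (by the results cited in the introduction), so that all simple modules are one-dimensional and the number of arrows $[\E_M]\rightarrow[\E_N]$ in $\extko{\B_Q}$ equals $\dim_k\Ext{\B_Q}{1}{E_M,E_N}$. First I would classify the simple modules. A one-dimensional module is an algebra homomorphism $\B_Q\rightarrow k$, and since $X_t^2=X_t$ its image $\epsilon_t$ is an idempotent of the field $k$, hence $\epsilon_t\in\{0,1\}$. Conversely, any assignment of values in $\{0,1\}$ satisfies all three families of relations automatically: after using idempotency the braid relation reads $\epsilon_s\epsilon_t=\epsilon_t\epsilon_s$ and the third relation reads $\epsilon_t\epsilon_s=\epsilon_s\epsilon_t$, both trivially true in $k$. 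Writing $\epsilon^L_t=1$ for $t\in L$ and $0$ otherwise, the simples are therefore exactly the $E_M$ on which $X_t$ acts by $\epsilon^M_t$, parametrized by subsets $M\subseteq Q_0$, and distinct subsets give non-isomorphic modules since any isomorphism of one-dimensional modules is scalar.

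For the arrows I would compute $\Ext{\B_Q}{1}{E_M,E_N}$ directly through two-dimensional extensions $0\rightarrow E_N\rightarrow V\rightarrow E_M\rightarrow 0$, which carry all extension classes. In a basis $(v_N,v_M)$ with $v_N$ spanning the sub $E_N$ and $v_M$ lifting the quotient $E_M$, each generator acts by $X_t=\left(\begin{smallmatrix}\epsilon^N_t & c_t\\ 0 & \epsilon^M_t\end{smallmatrix}\right)$, the off-diagonal $c_t\in k$ being the extension datum. The relation $X_t^2=X_t$ forces $c_t(\epsilon^N_t+\epsilon^M_t-1)=0$, and $\epsilon^N_t+\epsilon^M_t-1=0$ precisely when exactly one of $t\in M$, $t\in N$ holds; hence $c_t$ can be non-zero only for $t\in M\triangle N=(M\setminus N)\sqcup(N\setminus M)$, independently of the characteristic. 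Moreover the change of basis $v_M\mapsto v_M+\lambda v_N$ shifts $c_t$ by $\lambda(\epsilon^M_t-\epsilon^N_t)$, so the split classes form the line $(c_t)=\bigl(\lambda(\epsilon^M_t-\epsilon^N_t)\bigr)_t$; the Ext group is the space of admissible data modulo this line.

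Next I would read off the remaining relations from the off-diagonal entries of products of these triangular matrices. The braid relation applied to two indices of $M\setminus N$ forces all those $c_m$ to equal a single scalar $\alpha$, and applied to two indices of $N\setminus M$ forces all those $c_n$ to equal a single scalar $\beta$; the braid relation across the two blocks collapses to $0=0$. The only relation carrying information is the third one across the blocks: for $m\in M\setminus N$ and $n\in N\setminus M$ one computes $X_mX_nX_m=0$ while $X_nX_m=\left(\begin{smallmatrix}0&\alpha+\beta\\0&0\end{smallmatrix}\right)$, so the identity $X_mX_nX_m=X_nX_m$, which by definition of $\B_Q$ is imposed exactly when there is no arrow $m\rightarrow n$, yields the single equation $\alpha+\beta=0$. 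All instances involving a generator indexed outside $M\triangle N$ reduce to trivial identities. Thus the admissible data is $(\alpha,\beta)\in k^2$, subject to $\alpha+\beta=0$ precisely when some pair $(m,n)\in(M\setminus N)\times(N\setminus M)$ admits no arrow $m\rightarrow n$, and the split line is $(\alpha,\beta)=(\lambda,-\lambda)$.

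Finally I would assemble the dimension count. If $M\setminus N$ or $N\setminus M$ is empty, at most one of $\alpha,\beta$ exists and the split line already exhausts it, so the Ext group vanishes and there is no arrow. If both are non-empty and every pair carries an arrow $m\rightarrow n$, the data ranges over all of $k^2$ and the quotient by the one-dimensional split line has dimension $1$, giving exactly one arrow. If both are non-empty but some pair lacks an arrow, the constraint line $\alpha+\beta=0$ coincides with the split line, so the Ext group vanishes and there is no arrow. This simultaneously yields the ``at most one arrow'' statement and the stated criterion. I expect the main obstacle to be the bookkeeping of the third paragraph: one must verify carefully that the braid relations really collapse the data to the two scalars $\alpha,\beta$ and that no hidden constraint arises from relations indexed outside $M\triangle N$, since it is precisely this reduction to a plane that makes the constraint line and the split line coincide in exactly the degenerate case.
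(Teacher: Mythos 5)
Your computation of the extension groups (paragraphs two to four) is correct and is essentially the paper's own argument in subsection 4.2: the paper likewise parametrises the two-dimensional extensions by tuples $(a_q)_{q\in Q_0}$, uses idempotency to force $a_q=0$ off the symmetric difference, uses the remaining relations to collapse the data to two scalars constant on $M\ohne N$ and on $N\ohne M$, identifies the split classes with the line spanned by $(m_q-n_q)_{q\in Q_0}$, and observes that the only non-vacuous constraint, $\alpha+\beta=0$, is imposed exactly when some pair $(m,n)\in M\ohne N\times N\ohne M$ lacks an arrow $m\rightarrow n$, in which case the constraint line coincides with the split line. That bookkeeping, which you flag as the main obstacle, checks out and is in fact the routine half.

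The genuine gap is in your first paragraph. To parametrise the vertices of $\extko{\B_Q}$ you must show that \emph{every} simple $\B_Q$-module is isomorphic to some $\E_M$, i.e.\ that all simples are one-dimensional. You derive this from ``$\B_Q$ is basic, by the results cited in the introduction'', but the basicness of $\B_Q$ is exactly Proposition \ref{extkoecher prop Radikal und alle Einfachen}, which is one of the two subsections constituting the paper's proof of the theorem you are proving -- citing it is circular. Classifying the one-dimensional modules, as you do, only shows that the $\E_M$ are pairwise non-isomorphic simples; nothing in your argument excludes higher-dimensional simple modules. The paper closes this by computing the radical explicitly: it shows that the $k$-span of the elements $X_{\{w\}}-X_w$ equals the intersection of the annihilators of the $\E_M$, and that each such element is nilpotent via the identity $X_w^{s}=X_{\{w\}}$ with $s=s(\{w\})$, proved by induction using the generalized relations of Lemma \ref{Normalform-lem-generalized relations}; this sandwiches the radical and forces $\B_Q/\rad{\B_Q}\cong k^{2^{|Q_0|}}$. (Alternatively one may invoke the $\mathcal{J}$-triviality of the Hecke--Kiselman monoid and \cite{DHST}, but some such argument is indispensable.) Note also that ``basic'' over a non-algebraically-closed field does not by itself give one-dimensional simples; what is really needed, and what the radical computation delivers, is that $\B_Q$ modulo its radical is split semisimple.
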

Since the Hecke-Kiselman semigroup associated with an acyclic finite quiver is $\mathcal{J}$-trivial, these results can be deduced from \cite{DHST}. The author thanks Anne Schilling for pointing this reference out. Let $Q_n$ denote the following linearly oriented Dynkin quiver of type $A$:
 \begin{center}
 \begin{tikzpicture}[description/.style={fill=white,inner sep=2pt}, font=\normalsize]
            \matrix(m) [matrix of math nodes, row sep=1em, column sep=2em, text height=1.5ex, text depth=0.25ex]
            {1 & 2 &	{}\ldots & n\\};
	    \path[->,font=\scriptsize]
            (m-1-1) edge (m-1-2)
            (m-1-2) edge (m-1-3)
            (m-1-3) edge (m-1-4);
\end{tikzpicture}
\end{center}
This family of quivers has a special role, we show in subsection \ref{subsection properties of Gabriel quiver}:
\begin{prop} \label{Gabriel prop connected components}
Let $Q$ be connected, finite and acyclic.
\begin{itemize}
\item[(a)] If $Q=Q_n$ then the Gabriel quiver of $\B_{Q_n}$ has exactly $n+1$ connected components. 
\item[(b)] If $Q$ is distinct from the linearly oriented Dynkin quiver of type $A$, then the Gabriel quiver of $\B_Q$ has exactly $3$ connected components.
\end{itemize}
\end{prop}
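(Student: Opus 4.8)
The plan is to read off from Theorem~\ref{extkoecher-theo-Extkoecher} the underlying undirected graph $G_Q$ whose vertices are the subsets $M\subseteq Q_0$ and whose edges join $M$ and $N$ exactly when $A\ldef M\setminus N$ and $B\ldef N\setminus M$ are both non-empty and, writing ``$A$ dominates $B$'' for the property that $Q$ contains an arrow $m\rightarrow n$ for every $(m,n)\in A\times B$, either $A$ dominates $B$ or $B$ dominates $A$. The connected components of $\extko{\B_Q}$ are by definition the components of $G_Q$. First I would record two cheap observations valid for every finite acyclic $Q$: the vertices $\emptyset$ and $Q_0$ are isolated, since for them one of $A,B$ is always empty. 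They therefore contribute two components in every case, and the whole problem reduces to counting the components of $G_Q$ restricted to the proper non-empty subsets.

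Next I would classify the edges into two kinds. An edge with $|A|=|B|=1$, say $A=\{a\}$ and $B=\{b\}$, is a \emph{swap} $M\mapsto(M\setminus\{a\})\cup\{b\}$; it exists iff $a,b$ are adjacent in $Q$, and it preserves $|M|$. An edge with $|A|\neq|B|$ changes $|M|$, and it can exist only if some vertex of $Q$ has two distinct out-neighbours (forced on the $A$-side when $|B|>|A|\geq 1$, so $|B|\geq 2$) or two distinct in-neighbours (forced on the $B$-side when $|A|>|B|\geq 1$). The technical heart of both parts is the \textbf{token-sliding} statement: because the underlying graph of $Q$ is connected, the swaps act transitively on the $k$-subsets for every $0\leq k\leq|Q_0|$, so all subsets of a fixed cardinality lie in a single component of the subgraph of swap edges. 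I would prove this by the standard argument that indistinguishable tokens on a connected graph (equivalently, on a spanning tree) can be moved to any prescribed configuration of the same cardinality, by induction on the number of vertices via deletion of a leaf. This is the one genuinely non-formal ingredient, and hence the step I expect to be the main obstacle.

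For part~(a), let $Q=Q_n$, so an arrow $m\rightarrow n$ exists iff $n=m+1$. Then ``$A$ dominates $B$'' forces $b=a+1$ for all $a\in A$ and $b\in B$, which is possible only when $A=\{i\}$ and $B=\{i+1\}$. Thus every edge of $G_{Q_n}$ is a swap $i\leftrightarrow i+1$ and in particular preserves cardinality. Consequently the components refine according to $|M|$, while token-sliding on the (connected) path $Q_n$ shows that each cardinality class is connected; the components are therefore exactly the classes $k=0,1,\dots,n$, giving $n+1$ components.

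For part~(b), $Q$ is connected, acyclic and not a directed path, so some vertex $w$ has two distinct out-neighbours $x\neq y$ (the case of two distinct in-neighbours is symmetric, since reversing all arrows of $Q$ leaves $G_Q$ unchanged; and if all in- and out-degrees were $\leq 1$ a connected acyclic quiver would be a directed path). For each $1\leq k\leq n-2$, where $n=|Q_0|\geq 3$, I would choose a $k$-subset $M$ with $w\in M$ and $x,y\notin M$; then $M$ and $(M\setminus\{w\})\cup\{x,y\}$ are joined by a (\emph{split}) edge, linking the size-$k$ component to the size-$(k+1)$ component. Combining this ladder of split edges with the within-class connectivity supplied by token-sliding, all proper non-empty subsets, namely those of sizes $1,\dots,n-1$, collapse into a single component. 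Together with the two isolated vertices $\emptyset$ and $Q_0$ this yields exactly $3$ components, as claimed.
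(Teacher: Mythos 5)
Your proposal is correct and is essentially the paper's own argument: isolate $\emptyset$ and $Q_0$, prove that each cardinality class of subsets is connected through single-element swap edges (the paper's auxiliary lemma, which you recover as unlabelled token sliding on the connected underlying graph of $Q$), note that for $Q_n$ every edge of the Gabriel quiver is such a swap, and otherwise use a vertex with two in- or out-neighbours to produce edges between $D\cup\{x_1,x_2\}$ and $D\cup\{s\}$ that chain the classes of sizes $1,\dots,n-1$ together. The only divergence is in how the swap-connectivity lemma is established --- the paper inducts on the number of alternations, relative to $A$, of a walk in $Q$ from $a$ to $b$, whereas you invoke the standard leaf-deletion induction for token sliding --- and both are sound.
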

Therefore we devote section 5 to the algebra $\B_{Q_n}$, which is isomorphic to $k\monoid{Q_n}$ by section 3. It can be read almost independent. Now the monoid algebra of non-decreasing parking functions $\mathrm{NDPF}_{n+1}$ has the same defining relations as $\B_{Q_n}$ (see \cite{Hivert} or \cite{MazorchukKiselman}) and is thus isomorphic to $\B_{Q_n}$. As shown in \cite{Hivert} it is isomorphic to the incidence algebra $\Inc{\Po_n}$ of the product order $\grn{n}$ on the powerset of $\ito{n}\ldef\{1,2,\ldots, n\}$ if the underlying field $k$ is the field of complex numbers $\CC$. We recall, that for two subsets $K=\{k_1<\ldots < k_r\}$ and $J=\{j_1<\ldots < j_m\}$ of $\ito{n}$ we have $K\grn{n} J$ iff $m=r$ and $k_i\geq j_i$ holds for all $i\in \ito{r}$.  They use the representation theory of the symmetric group. In \cite{DHST} this is generalized using the $\mathcal{J}$-triviality of the underlying monoid. Here (see Main Theorem \ref{Dynkin maintheorem}) we inductively construct an isomorphism from $\Inc{\Po_n}$ to $\B_{Q_n}$ independent of the field -- in fact it holds for a commutative ring -- using the structure of the tower of algebras $\B_{Q_1}\subset \B_{Q_2}\subset \ldots$ and the action of $\monoid{Q_n}$ on the (injective indecomposable) $kQ_n$-modules. 
We expect that a notion of non-decreasing parking functions for arbitrary quivers determines defining relations for $\monoid{Q}$.
\newline\textbf{Acknowledgments:}
\newline This work is part of my PhD-thesis \cite{Paasch} supervised by Markus Reineke whom I thank for posing this question. 
\section{Relations}
For every homomorphism $\varphi\colon M\longrightarrow N$ in $\mo{A}$ we get the following commutative diagram defining $\p{U}\varphi$, which we will call the diagram of $\varphi$ induced by $\p{U}$, with exact rows, which we will call the short exact sequence of $M$ resp. $N$ induced by $\p{U}$: 
\begin{equation*}
        \begin{tikzpicture}[description/.style={fill=white,inner sep=2pt}]
            \matrix(m) [matrix of math nodes, row sep=3em, column sep=4em, text height=1.5ex, text depth=0.25ex]
            { 	0 & \subf{U}M & M & \p{U}M = {M}/{\subf{U}M} & 0  \\
		0 & \subf{U}N & N & \p{U}N = {N}/{\subf{U}N} & 0\\ };
            \path[->,font=\normalsize]
            (m-1-1) edge  (m-1-2)
            (m-1-2) edge  node[above]{$\iota_M$} (m-1-3)
	    (m-1-2) edge node[auto, swap]{$\subf{U}(\varphi)=\varphi|_{\subf{U}M}$}  (m-2-2)
	    (m-1-3) edge node[auto, swap]{$\varphi$} (m-2-3)
            (m-1-3) edge   node[above]{$\pi_M$} (m-1-4) 
	    (m-1-4) edge  (m-1-5)
	    (m-1-4) edge [dotted] node[auto, swap]{$\p{U}(\varphi)$}  (m-2-4)
            (m-2-1) edge  (m-2-2)
	    (m-2-2) edge node[above]{$\iota_N$} (m-2-3)
	    (m-2-3) edge  node[above]{$\pi_N$} (m-2-4)
	    (m-2-4) edge  (m-2-5);
        \end{tikzpicture}
    \end{equation*} 
Other descriptions of the endofunctor $\subf{U}$ are useful. Obviously for every $A$-module $M$, the module $\subf{U}M$ is the sum over all submodules $X$ of $M$ lying in $\gen{U}$. Therefore $\subf{U}M$ is the image of the evaluation map \mbox{$\mathrm{ev}_{U,M}\colon U \otimes_{\End{A}{U}} \Hom{A}{U,M} \longrightarrow M$} with \mbox{$u\otimes \varphi \mapsto \varphi(u)$}.
Now if $U=S$ is simple then the module $\subf{S}M$ is isomorphic to some $S^{\oplus d}$ and \mbox{$\mathrm{ev}\colon S \otimes_{\End{A}{S}} \Hom{A}{S,\_} \longrightarrow \subf{S}$} is thus even a natural isomorphism. 
\begin{proof}[Proof of Proposition \ref{relations proposition}(a).]
If $S$ has no non-trivial self-extensions, then $\subf{S}$ is a torsion radical, i.e.\ a subfunctor of the identity functor such that \mbox{$\subf{S}\bigl(M/\subf{S}M\bigr) = 0$} holds for all $M\in \mo{A}$, and $\im\p{S}=\ker\Hom{A}{S,\_}$ holds. 
\end{proof}
We will prove (b) of Proposition \ref{relations proposition} with (and after) the following lemma. 
\begin{lem}
Let $S$ and $T$ be two non-isomorphic simple modules. Then $\p{T}\circ \p{S}$ and $\p{S\oplus T}$ are naturally isomorphic if and only if $\Ext{A}{1}{T,S}=0$ holds.
\end{lem}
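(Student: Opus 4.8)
The plan is to describe the three subfunctors $\subf{S},\subf{T},\subf{S\oplus T}$ explicitly as socle components, to translate the natural-isomorphism assertion into an equality of subfunctors of the identity, and then to read off the role of $\Ext{A}{1}{T,S}$ from a single short exact sequence. As a first step I would record the concrete descriptions: since $S$ is simple, $\gen{S}$ consists exactly of the semisimple modules $S^{\oplus d}$, so $\subf{S}M$ is the $S$-isotypic part of $\Soc{M}$; and as $S\not\cong T$ are simple, $\gen{S\oplus T}$ consists of the semisimple modules $S^{\oplus a}\oplus T^{\oplus b}$, whence $\subf{S\oplus T}M$ is the $\{S,T\}$-isotypic part of $\Soc{M}$.

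Next I would reduce everything to a comparison of submodules of $M$. Writing $N\ldef\subf{S}M$ and letting $N'\subseteq M$ be the preimage of $\subf{T}(M/N)$ under the projection $M\to\p{S}M$, the third isomorphism theorem gives $\p{T}\p{S}M\cong M/N'$. The composite $\eta_M\ldef\pi^T_{\p{S}M}\circ\pi^S_M\colon M\to\p{T}\p{S}M$ of the two projection natural transformations is a natural epimorphism, so its kernel $N'$ is a subfunctor of $\id_{\mo{A}}$. Since $\p{S\oplus T}=\id/\subf{S\oplus T}$ is likewise the cokernel of a subfunctor, a natural isomorphism $\p{T}\p{S}\cong\p{S\oplus T}$ is equivalent to the equality $N'(M)=\subf{S\oplus T}M$ of subfunctors of the identity. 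Thus the lemma reduces to comparing the two submodules $N'$ and $\sigma\ldef\subf{S\oplus T}M$ of $M$.

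One inclusion holds unconditionally: the $S$-isotypic component of $\sigma$ is exactly $N$, so $\sigma/N$ is $T$-isotypic and semisimple, hence contained in $\subf{T}(M/N)=N'/N$, giving $\sigma\subseteq N'$. The reverse inclusion is where the hypothesis enters. I would use the short exact sequence $0\to N\to N'\to\subf{T}(M/N)\to 0$, which exhibits $N'$ as an extension of the $T$-isotypic module $\subf{T}(M/N)\cong T^{\oplus b}$ by the $S$-isotypic module $N\cong S^{\oplus a}$. If $\Ext{A}{1}{T,S}=0$, then the extension class in $\Ext{A}{1}{T^{\oplus b},S^{\oplus a}}$ vanishes, the sequence splits, and $N'$ is semisimple with composition factors among $\{S,T\}$; therefore $N'\subseteq\Soc{M}$ lies in $\sigma$, and $N'=\sigma$ follows, yielding the natural isomorphism.

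For the converse I would exhibit a single witness: if $\Ext{A}{1}{T,S}\neq0$, pick a non-split extension $0\to S\to E\to T\to0$. Being non-semisimple of length two, $E$ has $\Soc{E}=S$, so $\subf{S}E=S$, $\subf{T}E=0$ and $\subf{S\oplus T}E=S$; hence $\p{T}\p{S}E=\p{T}(E/S)=\p{T}(T)=0$ while $\p{S\oplus T}E=E/S\cong T\neq0$, so the two functors already disagree on $E$ and cannot be naturally isomorphic. The step I expect to require the most care is the reduction in the second paragraph: checking that $N'$ genuinely defines a subfunctor of $\id_{\mo{A}}$ and that an equality of these kernel-subfunctors delivers a \emph{natural}, not merely objectwise, isomorphism of the quotient functors; once this is in place, the remaining short-exact-sequence bookkeeping and the witness computation are routine.
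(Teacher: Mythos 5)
Your proof is correct, and the converse direction (the witness $0\to S\to E\to T\to 0$ with $\p{T}\p{S}E=0\neq T\cong\p{S\oplus T}E$) is literally the paper's argument. For the forward direction you take a genuinely different route. The paper constructs an explicit natural epimorphism $\hat{\pi}_M\colon \p{S\oplus T}M\to\p{T}\p{S}M$ by comparing the two induced short exact sequences, and proves injectivity by showing $\pi'\colon\subf{S\oplus T}M\to\subf{T}(\p{S}M)$ is onto, using the natural isomorphism $\subf{T}\cong T\otimes_{\End{}{T}}\Hom{A}{T,\_}$ together with $\Hom{A}{T,M}\cong\Hom{A}{T,\p{S}M}$ (this is where $\Ext{A}{1}{T,S}=0$ enters); naturality of $\hat{\pi}$ then needs a separate cube-diagram chase. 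You instead realise both functors as quotients of $\mathrm{id}_{\mo{A}}$ by subfunctors of the identity and prove the subfunctors $N'$ and $\subf{S\oplus T}$ coincide, the hypothesis entering through the splitting of $0\to\subf{S}M\to N'\to\subf{T}(M/\subf{S}M)\to 0$. The homological input is the same (vanishing of $\Ext{A}{1}{T,\subf{S}M}$, once as exactness of $\Hom{A}{T,\_}$, once as splitting of an extension), but your packaging makes naturality automatic, which is a real simplification over the paper's cube. One small imprecision: you assert that a natural isomorphism $\p{T}\p{S}\cong\p{S\oplus T}$ is \emph{equivalent} to equality of the two kernel subfunctors, but only the implication from kernel equality to natural isomorphism is justified (an abstract natural isomorphism need not commute with the projections from the identity functor); since the converse of the lemma is settled by the explicit witness $E$, you never use the unproved implication, so the argument stands --- just weaken ``equivalent to'' to ``implied by''.
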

\begin{proof}
 If there exists a non-split short exact sequence $0\longrightarrow S \overset{f}{\longrightarrow} X \longrightarrow T \longrightarrow 0$, then
the functors $\p{T}\circ \p{S} $ and $\p{S\oplus T}$ are not naturally isomorphic since we have:
\[
 \p{T}\circ \p{S} (X) = \p{T}\bigl(X/f(S)\bigr) = 0 \neq T \cong X/f(S) = \p{S\oplus T} (X)
\]
Now we assume $\Ext{A}{1}{T,S}=0$. Let $M$ be a module. Since $\subf{S\oplus T}M$ is semi-simple it coincides with $\subf{S}M \oplus \subf{T}M$. For that reason the restriction of the canonical projection \mbox{$\pi\colon M \longrightarrow M/\subf{S}M$} to $\subf{S\oplus T} M $ factors through \mbox{$\pi'\colon \subf{S\oplus T} M\longrightarrow \subf{T} (\p{S}M)$}. By passing to the cokernels we get the following commutative diagram whose exact rows are induced by $\p{S\oplus T}$ and $\p{T}$:
 \begin{equation*}
 \begin{tikzpicture}[description/.style={fill=white,inner sep=2pt}, font=\normalsize]
            \matrix(m) [matrix of math nodes, row sep=3em, column sep=2em, text height=1.5ex, text depth=0.25ex]
            {0 & \subf{S\oplus T} M			& M		& \p{S\oplus T}M ={M}/\subf{S\oplus T} M  	& 0\\
	    0  & \subf{T}(\p{S}M)		& \p{S}M 	& \p{T}\p{S}M = (\p{S}M)/\bigl(\subf{T}(M/\subf{S}M)\bigr)	& 0\\};
	    \path[->,font=\normalsize]
            (m-1-1) edge (m-1-2)
	    (m-1-4) edge (m-1-5)
	    (m-1-2) edge  (m-1-3)
            (m-1-2) edge node[auto] {$\pi'$} (m-2-2)
	    (m-1-3) edge  (m-1-4)  
            (m-1-3) edge node[auto] {$\pi$} (m-2-3)
	    (m-2-2) edge  (m-2-3)
            (m-2-3) edge  (m-2-4)
	    (m-2-1) edge (m-2-2)
	    (m-2-4) edge (m-2-5)
	    (m-1-4) edge [dotted] node[auto] {$\hat{\pi}=\hat{\pi}_M$} (m-2-4);
\end{tikzpicture}
\end{equation*}
To see that the epimorphism $\hat{\pi}_M$ is an isomorphism, it suffices to show that $\pi'$ is an epimorphism because of $\ker\pi'\cong \ker\pi$. Since $\Hom{A}{T,M}$ and $\Hom{A}{T,\p{S}M}$ are isomorphic by the assumptions, this follows from:
\[
 \subf{T}(\p{S}M) \cong T \otimes_{\End{}{T}} \Hom{A}{T,\p{S}M} \cong T \otimes_{\End{}{T}} \Hom{A}{T,M} \cong \subf{T}M\cong \pi'(\subf{S\oplus T}M) 
\]
Moreover we get for every morphism $\varphi\colon M\longrightarrow N$ the following cube,in which the sides on the left commute and in which thus the right square commutes. Thus \mbox{$\hat{\pi}$} is a natural isomorphism.
\begin{equation*}
\begin{tikzpicture}
  \matrix (m) [matrix of math nodes, row sep=1.5em, column sep=1.5em]
  { 	& N 	& 			&\p{S\oplus T}N &\\
 M  	& 	& \p{S\oplus T}M 	& 		&0\\
	& \p{S}N& 			& \p{T}\p{S}N 	&\\
 \p{S}M & 	& \p{T}\p{S}M 		& 		&\\};
  \path[-stealth, font=\scriptsize]
    (m-1-2) edge (m-1-4) 
            edge [densely dotted] (m-3-2)
    (m-1-4) edge node[above=15pt, right] {$\hat{\pi}_N$} (m-3-4) 
    (m-2-1) edge [-,line width=6pt,draw=white] (m-2-3)
    (m-2-1) edge  (m-2-3) edge (m-4-1) 
    (m-2-1) edge node[auto, swap] {$\varphi$} (m-1-2) 
    (m-3-2) edge [densely dotted] (m-3-4)
    (m-4-1) edge [densely dotted] node[auto, swap] {$\p{S}\varphi$}(m-3-2)
    (m-4-1) edge (m-4-3)
    (m-2-3) edge [-,line width=6pt,draw=white] (m-4-3)
            edge node[above=15pt, right] {$\hat{\pi}_M$}(m-4-3)
    (m-2-3) edge node[auto, swap] {$\p{S\oplus T}\varphi$} (m-1-4)
    (m-2-3) edge [-,line width=6pt,draw=white] (m-2-5)
	    edge (m-2-5)
    (m-4-3) edge node[auto, swap] {$\p{T}\p{S}\varphi$} (m-3-4);
\end{tikzpicture}
\end{equation*}
\end{proof}
\begin{proof}[Proof of Proposition \ref{relations proposition}(b).]
By the previous lemma it suffices to show \newline\mbox{\quad(A)\quad $\p{S\oplus T}\circ \p{T} \sim \p{S}\circ\p{T}$}\quad and \mbox{\quad(B)\quad $\p{S}\circ\p{S\oplus T} \sim \p{S}\circ\p{T}$}.
\newline\textit{Proof of (A).} Since $T$ has no non-trivial self-extensions and is as simple as $S$, a natural isomorphism is induced by:
\[
 \subf{S\oplus T}(\p{T}M)\cong \subf{S}(\p{T}M)\oplus \subf{T}(\p{T}M) \cong \subf{S}(\p{T}M)
\]
\newline\textit{Proof of (B).} Let $\alpha\colon \p{T}\longrightarrow \p{S\oplus T}$ be the natural transformation given by the following composition $\alpha_M$ of the canonical epimorphism and isomorphism for every module $M$:
\begin{equation*}
\begin{tikzpicture}%[description/.style={fill=white,inner sep=0pt}, bij/ .style={above, sloped,inner sep = 0.5pt}]
            \matrix(m) [matrix of math nodes, row sep=3em, column sep=3.5em, text height=1.5ex, text depth=0.25ex]
	    {\p{T}M={M}/{\subf{T}M} &(M/\subf{T}M)\bigg/\biggl((\subf{S}M\oplus \subf{T}M)/\subf{T}M\biggr) &  \p{S\oplus T} M \\};
	    \path[->>, font=\normalsize]
		(m-1-1) edge (m-1-2);
	    \path[->, font=\normalsize]	
		(m-1-1.north east) edge [bend left = 12] node[auto]{$\alpha_M$} (m-1-3.north west)
		(m-1-2) edge node[above]{$\cong$} (m-1-3);    
	    \end{tikzpicture} 
\end{equation*}
We claim, that the natural transformation \mbox{$(\p{S}\alpha_M)_{M\in \mo{A}}\colon \p{S}\circ\p{T}\longrightarrow \p{S}\circ \p{S\oplus T}$} is a natural isomorphism. For this we look at the diagram of $\alpha_M$ induced by $\p{S}$ and consider its exact sequence of kernels and cokernels given by the snake lemma:
\[
0\rightarrow \ker{\subf{S}\alpha_M} \overset{\iota}{\longrightarrow} \ker{\alpha_M}\longrightarrow \ker{\p{S}\alpha_M}
\longrightarrow \coker{\subf{S}\alpha_M} \longrightarrow 0 \longrightarrow \coker{\p{S}\alpha_M}\rightarrow 0
\]
Now $\ker \alpha_M\cong \subf{S}M\in \gen{S}$ is a submodule of $\subf{S}\p{T}M$. Hence the monomorphism $\iota$ is an isomorphism. Moreover $\subf{S} \alpha_M$ is surjective. This is seen by using the assumptions on $S$, which yield the surjectivity of $\Hom{A}{S,\alpha_M}$, and the natural isomorphism of $\subf{S}$ and $S\otimes \Hom{A}{S,\_}$.
\end{proof}
\section{The monoid algebras $\mathbf{k\monoid{Q}}$ and $\mathbf{\B_Q}$ for path algebras $\mathbf{kQ}$}
We now turn towards the class of finite dimensional path algebras. So let $Q=(Q_0,Q_1)$ be a finite acyclic quiver, i.e.\ $Q$ is an oriented graph without oriented cycles and with finite sets $Q_0$ and $Q_1$ of vertices and arrows respectively. We will denote an arrow in $Q$ by $\alpha\colon s\rightarrow t \in Q_1$ or $s\overset{\alpha}{\rightarrow} t$,  and a path $t_n\overset{\beta_n}{\leftarrow}\ldots\overset{\beta_{2}}{\leftarrow}t_{1}\overset{\beta_1}{\leftarrow}t$ by $\beta_n\ldots\beta_{2}\beta_1$. The path algebra of $Q$ over a field $k$ is denoted by $kQ$. The category $\mo{kQ}$ and the category $\rep{k}{Q}$ of finite dimensional $Q$-representations over $k$ are equivalent and we will not distinguish between them. 
Let $(\free{Q},\konk)$ be the free monoid over $Q_0$, i.e.\ the monoid of words over the alphabet $Q_0$ with the concatenation $\konk$ as multiplication. There is the canonical epimorphism $\rho\colon k\free{Q} \longrightarrow \B_Q$ with $q\mapsto X_q$ for all $q\in Q_0$. We will denote the image of a word $w\in \free{Q}$ under $\rho$ by $X_w$. By $\p{w}$ we denote the image of $X_w$ under the canonical epimorphism $\psi_Q\colon \B_Q\longrightarrow k\monoid{Q}$. 
\begin{df}
We call a subset $W$ of $\free{Q}$ an admissible normal form associated with $Q$ if the following conditions hold:
\begin{itemize}
 \item[(1)] $\{\emptyset\} \cup Q_0 \subseteq W$
 \item[(2)] $B_W\ldef \{X_w\mid w\in W\}$ is closed under (right-) multiplication with the generators $X_t$ of $\B_Q$.
 \item[(3)] For all words $v\neq w$ in $W$, there is a $Q$-representation $V$ with $\p{v} V \cong\!\!\!\! \left\vert\right.\; \p{w} V$.
\end{itemize}
\end{df}
Obviously the set $\set{X_w}{w\in \free{Q}}$ is a ${k}$-linear generating system of $\B_Q$. This definition extracts suitable conditions on a subset of $\set{X_v}{v\in \free{Q}}$ to be a basis of $\B_Q$ forcing $\psi_Q$ to be an isomorphism: due to the conditions (1) and (2), $B_W$ is a submonoid of $\B_Q$ which contains the generators $X_q$ of $\B_Q$ and the unit $1$. Hence the $k$-linear span of $B_W$ is the monoid algebra $\B_Q$ itself. Furthermore condition (3) ensures that the elements of $B_W$ are indexed by $W$. Thus $B_W$ is a $k$-linear basis of $\B_Q$ with $|W|$ elements. Moreover the canonical epimorphism $\psi_Q$ is an isomorphism because of condition (3).
The existence of an admissible normal form is not obvious. But finding one summarizes our strategy of proving that the relations are defining in several special cases.
\subsection{Tools for (2) and reductions}\label{subsection tools for (2)}
To begin with one needs a better understanding of the multiplication of two arbitrary monomials in $\B_Q$, that is to say of the defining relations. Since $Q$ is acyclic the third relation (under the two first ones) is equivalent to the following two:
\begin{itemize}
\item $X_tX_sX_t= X_sX_t$ for all $s,t \in Q_0$, such that there is an arrow $\alpha\colon s\rightarrow t$
\item $X_sX_t= X_tX_s$ for all vertices $s,t \in Q_0$ which are not connected by an arrow
\end{itemize}
Therefore the underlying monoid of $\B_Q$ is isomorphic to the Hecke-Kiselman semigroup associated with $Q$ introduced in \cite{MazorchukKiselman}. Let us fix some more notation. We will write $\{v\}$ for the set of the letters occuring in the word $v\in \free{Q}$. For example we have $\{v\}= \{1,5,7,15\}$ if $Q_0=\{1,2,\ldots, 15\}$ and $v= 1\konk5\konk15\konk7\konk1\konk5$. For a subset $M$ of $Q_0$ we will denote by $Q_M$ the full subquiver of $Q$ whose underlying set of vertices is exactly $M$ and we will abbreviate $Q_v\ldef Q_{\{v\}}$.  A vertex $t \in Q_0$ is called a sink (source), if no arrow has tail (head) $t$. The condition on $s,t\in Q_0$ in the third relation defining $\B_Q$ could be replaced by requiring $t$ to be a sink in $Q_{s\konk t}$. The defining relations of $\B_Q$ generalize to the following identities in $\B_Q$ (and thus in $k\monoid{Q}$):
\begin{lem}\label{Normalform-lem-generalized relations}
For all $t\in Q_0$ and all words $v,w$ over $Q_0$ we have:
\begin{itemize}
 \item[] $X_tX_wX_t=X_wX_t$ \qquad\, if $t$ is a sink in the subquiver $Q_{t\konk w}$
 \item[] $X_tX_wX_t=X_tX_w$ \qquad\: if $t$ is a source in the subquiver $Q_{t\konk w}$ 
\item[] $X_vX_w=X_wX_v$ \qquad\: if there is no arrow between the subquivers $Q_{v}$ and $Q_{w}$
\end{itemize}
\end{lem}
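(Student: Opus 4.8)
The plan is to deduce all three identities from the one-letter defining relations of $\B_Q$ by induction on the length of $w$ (resp.\ $v$). First I would record the relevant \emph{local} facts about a single letter $s\in\{w\}$. If $t$ is a sink in $Q_{t\konk w}$ then there is no arrow $t\to s$, so the third defining relation gives $X_tX_sX_t=X_sX_t$. If $t$ is a source in $Q_{t\konk w}$ then there is no arrow $s\to t$, so the same relation with the two vertices interchanged gives $X_sX_tX_s=X_tX_s$, and combining this with the braid relation $X_sX_tX_s=X_tX_sX_t$ yields $X_tX_sX_t=X_tX_s$. Finally, ``no arrow between $Q_v$ and $Q_w$'' means that any letter $p\in\{v\}$ and any letter $q\in\{w\}$ are unconnected, whence $X_pX_qX_p=X_qX_p$, $X_qX_pX_q=X_pX_q$ and the braid relation together force $X_pX_q=X_qX_p$. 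Note that all of these are consequences of the defining relations alone.

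For the sink identity I would induct on $|w|$, the base case $w=\emptyset$ being idempotency $X_t^2=X_t$. Writing $w=s\konk w'$, the sink hypothesis restricts to $w'$ because $\{w'\}\subseteq\{w\}$, so the induction hypothesis gives $X_{w'}X_t=X_tX_{w'}X_t$. The decisive manoeuvre is to use the induction hypothesis \emph{backwards} to reinsert an inner $X_t$, which creates the pattern to which the local fact applies:
\[
X_tX_wX_t=X_tX_s\,(X_{w'}X_t)=X_tX_s\,(X_tX_{w'}X_t)=(X_tX_sX_t)X_{w'}X_t=(X_sX_t)X_{w'}X_t=X_s\,(X_{w'}X_t)=X_wX_t,
\]
the fourth equality being the local fact and the fifth the induction hypothesis once more. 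The source identity is proved dually, inducting from the opposite end (write $w=w'\konk s$) and using $X_tX_sX_t=X_tX_s$ in place of $X_tX_sX_t=X_sX_t$.

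The commutation identity is the most routine part: it is the formal statement that two words commute once their letters commute pairwise across the words. I would first show by induction on $|v|$ that a single generator $X_q$ with $q\in\{w\}$ commutes with $X_v$, and then induct on $|w|$ to obtain $X_vX_w=X_wX_v$.

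The only genuinely non-obvious point --- and hence the main obstacle --- is that none of these identities can be reached by naively peeling the outer $X_t$ off at one end: the defining relation only acts on a three-term block $X_tX_sX_t$, which is destroyed by a plain split of the word. The trick, displayed above, is to run the induction hypothesis in reverse to regenerate that block. Beyond this, the proof is bookkeeping: one must check that the sink (resp.\ source) property passes to the relevant subword at each inductive step, which is immediate since deleting letters only shrinks the subquiver $Q_{t\konk w}$.
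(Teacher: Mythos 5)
Your proof is correct and follows essentially the same route as the paper: induction on the length of $w$, with the same key manoeuvre of running the induction hypothesis backwards to reinsert an inner $X_t$ and recreate the three-term block $X_tX_sX_t$. The only cosmetic differences are that you peel a letter off the front of $w$ where the paper peels one off the end, and that you prove the source identity by a mirrored induction where the paper invokes duality.
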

\begin{proof}
The second identity follows by duality from the first one, meanwhile the third one results directly from the others. We prove the first identity by an induction on the length of $w$.  If $w=s$ the identity is just one of the defining relations of $\B_Q$. So let $w=u\konk s$ for some $s\in Q_0$ and some word $u$. The induction hypothesis applies now to $s$ and to $u$:
 \[
  X_tX_wX_t= X_tX_uX_sX_t = X_tX_uX_tX_sX_t= X_uX_tX_sX_t= X_uX_sX_t=X_wX_t
 \]
\end{proof}
Essentially all calculations are abbreviated using these generalized relations. For example the finiteness of $\monoid{Q}$ can be deduced from them. 
\begin{cor}\label{Special_cases Cor finite}
The $k$-algebra $\B_Q$ is finite dimensional. Hence the monoid $\monoid{Q}$ is finite.
\end{cor}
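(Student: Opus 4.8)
The plan is to prove that $\B_Q$ is finite dimensional by showing that the set of distinct monomials $X_w$ ($w\in\free{Q}$) spanning $\B_Q$ is finite, proceeding by induction on the number $n=|Q_0|$ of vertices. The clause ``hence $\monoid{Q}$ is finite'' is then immediate: by construction $k\monoid{Q}$ has $\monoid{Q}$ as a $k$-basis, and the surjection $\psi_Q\colon\B_Q\longrightarrow k\monoid{Q}$ forces $|\monoid{Q}|=\dim_k k\monoid{Q}\leq \dim_k\B_Q<\infty$. The base case $n\leq 1$ is clear, since for a single vertex $\B_Q\cong k[X]/(X^2-X)$ is two-dimensional.

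For the inductive step I would choose a sink $t\in Q_0$, which exists because $Q$ is finite and acyclic. As $t$ has no outgoing arrows in $Q$, it remains a sink in every full subquiver $Q_{t\konk w}$, so the first identity of Lemma \ref{Normalform-lem-generalized relations} applies to \emph{every} word $w$ over $Q_0$:
\[
 X_t X_w X_t = X_w X_t .
\]
Using this repeatedly, I would argue that every element of the monoid admits a representative word in which the letter $t$ occurs at most once, and, if it occurs, only as the rightmost letter: given a word with at least two occurrences of $t$, apply the displayed relation to the leftmost pair (with $w$ the subword strictly between them) to delete the left occurrence, and iterate. Consequently every monomial equals either $X_u$ or $X_u X_t$ for some $t$-free word $u$.

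It then remains to bound the number of monomials $X_u$ with $t$ not occurring in $u$. Such $u$ lie in the submonoid generated by $\{X_s\mid s\in Q_0\ohne\{t\}\}$. Since the defining relations of $\B_Q$ are local — each only constrains the full subquiver on the letters involved — the relations among these generators are exactly those of $\B_{Q'}$ for the full subquiver $Q'\ldef Q_{Q_0\ohne\{t\}}$; hence $X_s\mapsto X_s$ ($s\neq t$) defines an algebra homomorphism $\B_{Q'}\longrightarrow\B_Q$ whose image contains all the $X_u$. By the inductive hypothesis $\B_{Q'}$ is finite dimensional, so there are only finitely many distinct such $X_u$, and therefore $\B_Q$ is spanned by at most $2\dim_k\B_{Q'}$ monomials.

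The step I expect to require the most care is this last one: verifying that a $t$-free monomial is governed precisely by the relations of the smaller quiver $Q'$, i.e.\ that passing to a full subquiver does not introduce or lose relations. This rests on the locality of the presentation (the third relation for $s,s'\neq t$ refers only to arrows between $s$ and $s'$, which coincide in $Q$ and in the full subquiver $Q'$), which the article treats separately; once this is in place, the reduction to a single trailing $t$ together with the induction closes the argument.
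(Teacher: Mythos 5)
Your overall strategy --- induct on $|Q_0|$, pick a sink $t$, and use the generalized relation $X_tX_wX_t=X_wX_t$ of Lemma \ref{Normalform-lem-generalized relations} to cut the number of occurrences of $t$ down to one --- is the paper's strategy, and the reduction to \emph{at most one occurrence of $t$} is correct. But the further claim that this surviving occurrence can be made the \emph{rightmost} letter, so that every monomial equals $X_u$ or $X_uX_t$ with $u$ a $t$-free word, is false, and with it the bound $2\dim_k\B_{Q'}$. After the cancellation step a monomial has the form $X_{u_1}X_tX_{u_2}$ with $u_1,u_2$ $t$-free (the surviving $t$ sits at the position of the \emph{last} occurrence in the original word, not at the end), and you cannot in general commute $X_t$ past $X_{u_2}$: that would require no arrow between $t$ and the letters of $u_2$. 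Concretely, take $Q=Q_2\colon 1\rightarrow 2$ and $t=2$. By Proposition \ref{Special cases  prop normal form of B_n lin.oriented} the monomials $1, X_1, X_2, X_{1\konk 2}, X_{2\konk 1}$ form a basis, so $X_2X_1$ is distinct from all of $1,X_1,X_2,X_1X_2$; your claimed spanning set $\{X_u,\,X_uX_2 \mid u\in\{\emptyset,1\}\}$ has only four elements and misses it, and indeed $\dim_k\B_{Q_2}=C_3=5>4=2\dim_k\B_{Q_1}$.

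The error does not sink the conclusion: the part of your argument that is correct already shows that every monomial lies in $\B_{Q'}\cup\B_{Q'}X_t\B_{Q'}$, so if $B$ is a finite set of monomials spanning $\B_{Q'}$ (supplied by the induction hypothesis), then $B\cup BX_tB$ is a finite spanning set of $\B_Q$ --- which is precisely the paper's proof. Note also that for this you only need the easy direction of your ``locality'' step, namely that the subalgebra of $\B_Q$ generated by $\{X_s\mid s\in Q_0\ohne\{t\}\}$ is a quotient of $\B_{Q'}$ (every defining relation of $\B_{Q'}$ holds in $\B_Q$ because $Q'$ is a full subquiver); the stronger assertion that the relations are ``exactly'' those of $\B_{Q'}$, i.e.\ injectivity of $\B_{Q'}\rightarrow\B_Q$ as in Proposition \ref{specialcases prop BK and BQ}(a), is not needed. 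So: drop the ``rightmost letter'' claim, replace $B\cup BX_t$ by $B\cup BX_tB$, and the argument closes.
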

\begin{proof}
If $Q$ just consists of one vertex, $\B_Q$ is two-dimensional. So now we assume $Q$ to have at least two vertices, pick a sink $s\in Q_0$ and consider the quiver $K\ldef Q_{Q_0\setminus \{s\}}$. Inductively $\B_K\subseteq \B_Q$ is finite dimensional, hence has a finite basis $B$ of monomials over $\set{X_t}{t\in Q_0\!\setminus \!\{s\}}$. Due to Lemma \ref{Normalform-lem-generalized relations} every monomial $X_w \in \B_Q$ lies either in $\B_KX_s\B_K$ (if $s\in \{w\}$) or in $\B_K$ (if $s\notin \{w\}$). Thus $B\cup BX_sB$ is a finite $k$-linear generating system of $\B_Q$.
\end{proof}
The next observations enable us to restrict to isomorphism classes of finite, acyclic and connected quivers without multiple arrows. We will call a subquiver $K=(K_0,K_1)$ of $Q$ ``the quiver reduced by multiple arrows of $Q$`` if $K_0=Q_0$ holds and if there is exactly  one arrow between two vertices $s$ and $t$ in $K_1$ whenever there exists (at least) one arrow between $s$ and $t$ in $Q_1$. 
\begin{prop}\label{specialcases prop BK and BQ} 
Let $K, K'$ and $Q$ be finite, acyclic quivers.
\newline (a)\quad If $K$ is a full subquiver of $Q$, then $\B_K$ is a subalgebra of $\B_Q$. 
\newline (b)\quad If $Q$ and $K$ are (anti-) isomorphic, then $\B_Q$ and $\B_K$ are (anti-) isomorphic. 
\newline (c)\quad If $K$ is the quiver reduced by multiple arrows of $Q$, then $\B_K$ and $\B_Q$ are isomorphic. 
\newline (d)\quad If $K$ and $K'$ are the connected components of $Q$, then $\B_{Q}$ and $\B_{K}\otimes_k \B_{K'}$ are isomorphic.
\end{prop}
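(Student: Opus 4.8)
The plan is to exploit the explicit presentation of $\B_Q$ by generators $X_t$ and the three families of relations throughout. For each of the four parts I will build maps directly between the generator sets and verify that they respect the defining relations, which suffices because $\B_Q$ is the monoid algebra on the Hecke-Kiselman presentation.

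For part (a), I would define a $k$-algebra homomorphism $\iota\colon \B_K \longrightarrow \B_Q$ by $X_t\mapsto X_t$ for all $t\in K_0$. Since $K$ is a \emph{full} subquiver of $Q$, the arrow relations among vertices in $K_0$ are literally the same in $K$ and in $Q$; hence $\iota$ is well defined on relations, giving an algebra homomorphism. The real content is injectivity, and I would argue it is split: there is a retraction $\B_Q\twoheadrightarrow \B_K$ sending $X_t\mapsto X_t$ for $t\in K_0$ and $X_t\mapsto 1$ for $t\in Q_0\setminus K_0$. To see this retraction is well defined one checks that each defining relation of $\B_Q$ maps to a valid identity in $\B_K$: relations involving only vertices of $K_0$ are preserved (fullness again), while any relation involving a vertex outside $K_0$ collapses --- for instance $X_sX_tX_s=X_tX_sX_t$ becomes a trivial identity once the outside generators are replaced by $1$. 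Composing, the retraction times $\iota$ is the identity on $\B_K$, so $\iota$ is injective and $\B_K$ is a subalgebra.

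Parts (b) and (c) are then straightforward presentation checks. For (b), a quiver isomorphism $Q\to K$ induces a bijection $Q_0\to K_0$ preserving the arrow-incidence data, so sending $X_t$ to the image generator matches relations on the nose; an anti-isomorphism reverses all arrows, which (since the third relation was seen to be self-dual up to swapping the sink/source roles) corresponds exactly to the algebra anti-automorphism $X_{t_1}\cdots X_{t_r}\mapsto X_{t_r}\cdots X_{t_1}$. For (c), the key observation is already highlighted in the excerpt: the relations of $\B_Q$ are \emph{independent of the number of arrows} between two vertices --- only the presence or absence of an arrow, and its direction, enters the third relation. Since $K$ is the quiver reduced by multiple arrows of $Q$, it has the same vertex set and the same underlying directed adjacency (one arrow $s\to t$ in $K_1$ precisely when there is at least one arrow from $s$ to $t$ in $Q_1$), so the two presentations are literally identical and $X_t\mapsto X_t$ is an isomorphism.

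For part (d), I would define mutually inverse homomorphisms between $\B_Q$ and $\B_K\otimes_k\B_{K'}$. Because $K$ and $K'$ are the connected components, $Q_0=K_0\sqcup K'_0$ and there is \emph{no} arrow between any vertex of $K$ and any vertex of $K'$; by the third bullet of Lemma \ref{Normalform-lem-generalized relations}, $X_sX_{s'}=X_{s'}X_s$ for all $s\in K_0$, $s'\in K'_0$. I would send $X_s\mapsto X_s\otimes 1$ for $s\in K_0$ and $X_{s'}\mapsto 1\otimes X_{s'}$ for $s'\in K'_0$; the commuting relations across components are exactly what is needed for this to respect the presentation, since in the tensor product the two tensor factors commute automatically. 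The inverse sends $X_w\otimes X_{w'}\mapsto X_wX_{w'}$, well defined because the two blocks of generators commute in $\B_Q$, and the two maps are visibly inverse on generators. I expect this last part to require the most care, as one must confirm that \emph{every} relation of $\B_Q$ is accounted for --- relations internal to $K$ land in the first factor, those internal to $K'$ in the second, and the only cross-component relations are the commutativity ones, which are precisely the tensor-product relations; the disjointness of the two vertex sets and the absence of connecting arrows is what closes this off.
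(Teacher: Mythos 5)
Your proof is correct and follows the presentation-based approach the paper has in mind: the paper states this proposition without proof, declaring in the introduction that these facts are obvious for $\B_Q$ from its defining relations, and your argument is a faithful elaboration of exactly that. The one step with genuine content --- injectivity in (a) --- is handled cleanly by your split retraction $X_t\mapsto 1$ for $t\in Q_0\setminus K_0$, which (together with using the braid relation to reconcile $X_sX_tX_s$ with $X_tX_sX_t$ in the anti-isomorphism case of (b)) makes the paper's implicit claims rigorous.
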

The analogous statements for $k\monoid{K}$ and $k\monoid{Q}$ are a priori not clear. However, if there is an admissible normal form associated with $K$ most of them hold (see Proposition \ref{specialcases prop monoidK and monoidQ}). 

\subsection{Tools for (3) and a criteria for reductions}\label{subsection tools for (3)}
For any vertex $t\in Q_0$ we abbreviate $\p{t}= \p{t}^{(Q)}\ldef \p{S_t}$ and similar for $\subf{S_t}$.
The projection functor $\p{t}$ on $\rep{k}{Q}$ associated with $S_t$ is  easily computed for representations $V$ of $Q$, since the functor $\subf{S_t}$ maps $V$ to that submodule $U$ of the socle $\mathrm{Soc}(V)\subseteq V$, which is given by $U_t= (\mathrm{Soc}V)_t$  and $U_s=0$ for all $s\in Q_0\ohne\{t\}$. Therefore $\p{t}V$ is described by $(\p{t}V)_t=V_t/U_t$ and $(\p{t}V)_s=V_s$ for all $s\in Q_0\ohne\{t\}$  and the respectively induced $k$-linear maps. Consequences, in particular for simples and injective indecomposable representations, are summarized in the next remarks.  Let $I_x$ be the injective envelope of $S_x$.
\begin{rem}
A $Q$-representation $V$ is fixed under the action of those $\p{t}$ with $t\notin \mathrm{supp} V \linebreak\ldef \set{q\in Q_0}{V(q)\neq 0}$. Thus $\p{w}{V}= V$ holds for all words $w$ over $Q_0\ohne\mathrm{supp}(V)$. In particular we have for all $w\in \free{Q}$:
\[
\p{w}(S_t) = \begin{cases}
             S_t \quad &\text{if }\; t\notin \{w\}\\ 
             0 \quad &\text{if }\; t\in \{w\}\\
\end{cases}
\]
\end{rem}
\begin{rem}
The action of $\p{t}$ on the injective indecomposable representation $I_x$ is:
\[
 \p{t}(I_x) = \begin{cases}
               I_x\quad &\text{if }\; x\neq t\\
               I_t/S_t = \bigoplus_{s\rightarrow t} I_s \quad&\text{if }\; x=t
              \end{cases}
\]
Hence $I_x$ is fixed under $\p{w}$ for all words $w$ over $Q_0$ not containing $x$. 
\end{rem}
Let $K$ be a subquiver of $Q$ and $F\colon\rep{k}{K} \rightarrow \rep{k}{Q}$ the canonical embedding functor. Recall that for every $K$-representation $U$ the $Q$-representation $FU$ is defined by setting for all $t\in Q_0$ and $\alpha\in Q_1$:
\[
(FU)_t \ldef \begin{cases}
              U_t \quad &\text{if } t\in K_0\\
              0\quad &\text{otherwise}
             \end{cases} \qquad \text{and } \quad (FU)_\alpha \ldef \begin{cases}
              U_\alpha \quad &\text{if } \alpha\in K_1\\
              0\quad &\text{otherwise}
             \end{cases}
\]
\begin{lem}\label{Special_cases lem embedding functor}
Let $K$ be a subquiver of $Q$, $x\in K_0$ and $v, w\in\free{K}$.
\newline (a)\quad The functors $F\p{x}^{(K)}$ und $\p{x}^{(Q)}F$ are naturally isomorphic.
\newline (b)\quad If there is a $K$-representation $V$ with \mbox{$\p{v}^{(K)}V\ncong \p{w}^{(K)}V$}, then $\p{v}^{(Q)}$ and $\p{w}^{(Q)}$ are not naturally isomorphic.
\end{lem}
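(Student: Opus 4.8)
The plan is to prove (a) first and then to deduce (b) from it by iteration. For (a) I would reduce the claim about the projection functors to the corresponding claim about the torsion radicals $\subf{x}^{(K)}$ and $\subf{x}^{(Q)}$. The embedding $F$ extends a representation by zero on $Q_0\ohne K_0$ and $Q_1\ohne K_1$, so it is exact and in particular commutes with cokernels. As $\p{x}^{(K)}$ is by construction the cokernel of the inclusion $\subf{x}^{(K)}\hookrightarrow\id$ and $\p{x}^{(Q)}$ that of $\subf{x}^{(Q)}\hookrightarrow\id$, it suffices to produce a natural identification $F\subf{x}^{(K)}\cong\subf{x}^{(Q)}F$ of subfunctors of $F$; passing to cokernels then gives $F\p{x}^{(K)}\cong\p{x}^{(Q)}F$.

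To compare the two subfunctors I would use the explicit description of $\subf{x}$ recalled above: for a representation $V$ the module $\subf{x}V$ is supported only at $x$, with $(\subf{x}V)_x=(\Soc{V})_x=\bigcap_{\alpha\colon x\to y}\ker V_\alpha$. Hence, for a $K$-representation $U$, both $F(\subf{x}^{(K)}U)$ and $\subf{x}^{(Q)}(FU)$ are the submodule of $FU$ that vanishes off $x$, and the whole question reduces to the subspace at $x$. The decisive point is that every arrow of $Q$ starting at $x$ which does not lie in $K_1$ acts as the zero map on $FU$, and a zero map imposes no socle condition since its kernel is all of $U_x$. Therefore $(\Soc{FU})_x=\bigcap_{\alpha\in K_1,\,\alpha\colon x\to y}\ker U_\alpha=(\Soc{U})_x$, so the two submodules of $FU$ literally coincide and the identification is natural. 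This socle computation -- especially the bookkeeping caused by $K$ not being full, so that $Q$ may carry extra arrows between vertices of $K_0$ -- is the one step that needs care and is the main obstacle.

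For (b) I would first iterate (a): for any word $v\in\free{K}$ an easy induction on the length of $v$ yields $\p{v}^{(Q)}F\cong F\p{v}^{(K)}$, moving one generator $\p{x}^{(Q)}$ across $F$ at a time. Next I would note that $F$ is fully faithful -- a morphism $FU\to FU'$ must vanish off $K_0$, and the arrows of $Q_1\ohne K_1$ only impose the trivial relation $0=0$, so that $\Hom{Q}{FU,FU'}=\Hom{K}{U,U'}$ -- and that a fully faithful functor reflects isomorphisms. The lemma then follows by contraposition: suppose $\p{v}^{(Q)}\sim\p{w}^{(Q)}$; precomposing this natural isomorphism with $F$ and invoking the iterated form of (a) gives $F\p{v}^{(K)}\sim F\p{w}^{(K)}$, hence $F(\p{v}^{(K)}V)\cong F(\p{w}^{(K)}V)$ for every $K$-representation $V$. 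Since $F$ reflects isomorphisms this forces $\p{v}^{(K)}V\cong\p{w}^{(K)}V$ for all $V$; equivalently, the existence of a single $V$ with $\p{v}^{(K)}V\ncong\p{w}^{(K)}V$ prevents $\p{v}^{(Q)}$ and $\p{w}^{(Q)}$ from being naturally isomorphic, which is exactly the assertion of (b).
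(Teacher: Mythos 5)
Your proof is correct and follows essentially the same route as the paper: apply the exact embedding $F$ to the short exact sequence induced by $\p{x}^{(K)}$, identify $F\subf{x}^{(K)}U$ with $\subf{x}^{(Q)}FU$, and pass to cokernels, then deduce (b) by iterating (a) and using that $F$ separates non-isomorphic objects. You merely spell out two points the paper leaves implicit, namely the socle computation showing the two subfunctors coincide even when $K$ is not full, and the full faithfulness of $F$ used to reflect the non-isomorphism.
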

\begin{proof}
We apply $F$ to the short exact sequence of a $K$-representation $U$ induced by $\p{x}^{(K)}$ and get the short exact sequence:
\[
0\longrightarrow F\subf{x}^{(K)} U \longrightarrow FU \longrightarrow F\p{x}^{(K)} U\longrightarrow 0
\]
This is already the short exact sequence  of $FU$ induced by $\p{x}^{(Q)}$ because of \mbox{$F\subf{x}^{(K)} U=\subf{x}^{(Q)}FU$} and the uniqueness (up to isomorphism) of the cokernel.
Therefore (a) holds. In particular, we conclude (b) from: $\p{v}^{(Q)}FV \cong F\p{v}^{(K)} V \ncong F\p{w}^{(K)} V \cong \p{w}^{(Q)}F V$. 
\end{proof}
\begin{prop} \label{specialcases prop monoidK and monoidQ}
Let $K,K'$ and $Q$ be finite, acyclic quivers. Assume that there are admissible normal forms $W$ and $W'$ associated with $K$ and $K'$ respectively. 
\newline (a)\quad If $K$ is a full subquiver of $Q$, then $k\monoid{K}$ is a subalgebra of $k\monoid{Q}$. 
\newline (b)\quad If $Q$ and $K$ are isomorphic, then $k\monoid{K}$ and $k\monoid{Q}$ are isomorphic. 
\newline (c)\quad If $K$ is the quiver reduced by multiple arrows of $Q$, then $k\monoid{K}$ and $k\monoid{Q}$ are isomorphic. 
\newline (d)\quad If $K$ and $K'$ are the connected components of $Q$, then $k\monoid{Q}$ and $k\monoid{K}\otimes_k k\monoid{K'}$ are isomorphic.
\end{prop}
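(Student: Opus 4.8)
\emph{Overall strategy.} In every part the plan is to transport the corresponding statement for the algebras $\B$ (Proposition \ref{specialcases prop BK and BQ}) along the quotient maps $\psi$, exploiting that the existence of an admissible normal form forces $\psi$ to be an isomorphism. Concretely, since $W$ is admissible for $K$ the map $\psi_K$ is an isomorphism, so $B_W=\{X_w\mid w\in W\}$ is a $k$-basis of $\B_K$; likewise $\psi_{K'}$ is an isomorphism with basis $B_{W'}$. For (a) I would restrict $\psi_Q$ to the subalgebra $\B_K\subseteq\B_Q$ provided by Proposition \ref{specialcases prop BK and BQ}(a) and set $\theta\ldef\psi_Q|_{\B_K}$, an algebra homomorphism sending the basis element $X_w$ to $\p{w}^{(Q)}$. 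By condition (3) for $W$, distinct $v,w\in W$ are separated by some $K$-representation, i.e.\ $\p{v}^{(K)}V\ncong\p{w}^{(K)}V$; Lemma \ref{Special_cases lem embedding functor}(b) then yields $\p{v}^{(Q)}\ncong\p{w}^{(Q)}$. Hence the $\p{w}^{(Q)}$ with $w\in W$ are pairwise distinct elements of $\monoid{Q}$, so they are linearly independent in $k\monoid{Q}$, $\theta$ is injective, and $k\monoid{K}\cong\B_K\cong\theta(\B_K)$ is a subalgebra of $k\monoid{Q}$ (via $\p{x}^{(K)}\mapsto\p{x}^{(Q)}$). Note this argument does \emph{not} need $\psi_Q$ to be an isomorphism.

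For (b) a quiver isomorphism $\sigma\colon K\to Q$ induces an isomorphism of representation categories intertwining $\p{x}^{(K)}$ with $\p{\sigma(x)}^{(Q)}$, hence a monoid isomorphism $\monoid{K}\cong\monoid{Q}$ and thus $k\monoid{K}\cong k\monoid{Q}$ directly; alternatively one transports $W$ to an admissible normal form $\sigma(W)$ for $Q$ and chains $k\monoid{Q}\cong\B_Q\cong\B_K\cong k\monoid{K}$ using Proposition \ref{specialcases prop BK and BQ}(b). For (c) the plan is to show that the \emph{same} $W$ is admissible for $Q$. Conditions (1) and (2) refer only to $\B_Q$, which is isomorphic to $\B_K$ via $X_t\mapsto X_t$ (Proposition \ref{specialcases prop BK and BQ}(c), same vertex set), so they hold automatically. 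For condition (3) I would introduce the diagonal functor $G\colon\rep{k}{K}\to\rep{k}{Q}$ assigning to each arrow of $Q$ the linear map of the unique parallel arrow of $K$; since $K_0=Q_0$ it is the identity on underlying graded vector spaces, hence exact, fully faithful, and reflecting isomorphisms. Because the socle at a vertex $x$ depends only on the maps leaving $x$, and these have the same kernel for $U$ and $GU$, one gets $\subf{x}^{(Q)}GU=G\subf{x}^{(K)}U$ and therefore $\p{x}^{(Q)}G\cong G\p{x}^{(K)}$, exactly as in Lemma \ref{Special_cases lem embedding functor}(a). Separating $\p{v}^{(K)},\p{w}^{(K)}$ by a representation $V$ then separates $\p{v}^{(Q)},\p{w}^{(Q)}$ by $GV$. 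Thus $W$ is admissible for $Q$, $\psi_Q$ is an isomorphism, and $k\monoid{K}\cong\B_K\cong\B_Q\cong k\monoid{Q}$.

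For (d), where $Q=K\sqcup K'$, I would construct the admissible normal form $W\konk W'=\{v\konk v'\mid v\in W,\ v'\in W'\}$ for $Q$; over the disjoint alphabets $K_0,K_0'$ the concatenation is injective, so $|W\konk W'|=|W|\,|W'|$. Condition (1) is immediate. For condition (2), right multiplication by a generator $X_t$ with $t\in K_0$ is handled by moving $X_t$ past $X_{v'}$ via cross-component commutativity ($X_{v'}X_t=X_tX_{v'}$ by Lemma \ref{Normalform-lem-generalized relations}, as there is no arrow between the components) and then applying closure of $B_W$; the case $t\in K_0'$ is symmetric. For condition (3), given distinct $v\konk v',u\konk u'$ one may assume $v\neq u$; choose a $K$-representation $V_0$ with $\p{v}^{(K)}V_0\ncong\p{u}^{(K)}V_0$ and embed it by $F$. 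As $FV_0$ is supported in $K$, the letters of $v',u'$ act as the identity on it, so $\p{v\konk v'}^{(Q)}FV_0\cong F\p{v}^{(K)}V_0\ncong F\p{u}^{(K)}V_0\cong\p{u\konk u'}^{(Q)}FV_0$, using Lemma \ref{Special_cases lem embedding functor}(a) and that $F$ reflects isomorphisms; the case $v'\neq u'$ is symmetric with $K'$. Hence $\psi_Q$ is an isomorphism, and combining with Proposition \ref{specialcases prop BK and BQ}(d) and the isomorphism $\psi_K\otimes\psi_{K'}$ gives $k\monoid{Q}\cong\B_Q\cong\B_K\otimes_k\B_{K'}\cong k\monoid{K}\otimes_k k\monoid{K'}$.

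\emph{Main obstacle.} The bookkeeping with the maps $\psi$ is routine; the genuine content lies in verifying condition (3) for $Q$ in parts (c) and (d). I expect (c) to be the hard step: one must establish the analogue of Lemma \ref{Special_cases lem embedding functor} for the diagonal functor $G$, that is, that collapsing multiple arrows leaves the projection functors unchanged up to the comparison $G$. This is precisely where the insensitivity of the relations to arrow multiplicities — obvious for $\B_Q$ — has to be re-proved at the level of representations rather than merely at the level of the algebra.
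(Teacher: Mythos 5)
Your proposal is correct and, for parts (a), (b) and (d), it is essentially the paper's own argument: transport along $\psi$, use Lemma \ref{Special_cases lem embedding functor} to see that the monomials $\p{v}^{(Q)}$ with $v\in W$ are pairwise distinct elements of $\monoid{Q}$ and hence linearly independent in $k\monoid{Q}$, and take $W\konk W'$ as an admissible normal form for $Q$ in the disconnected case.

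The only genuine divergence is in (c), and there the step you single out as the main obstacle is not one in the paper's setup. The quiver $K$ reduced by multiple arrows is itself a subquiver of $Q$ (same vertex set, one chosen representative arrow per bundle), so Lemma \ref{Special_cases lem embedding functor} applies verbatim with the zero-extension functor $F$: a separating $K$-representation $V$ already yields $\p{v}^{(Q)}FV\ncong \p{w}^{(Q)}FV$, because the extra arrows, being sent to zero, do not change the kernel intersection that computes $\subf{x}$. The paper then finishes (c) by the same dimension count as in (a): the homomorphism $k\monoid{K}\rightarrow k\monoid{Q}$ is injective since $(\p{v}^{(Q)})_{v\in W}$ is linearly independent, and surjective since its image contains all generators $\p{x}^{(Q)}$ because $K_0=Q_0$. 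Your alternative -- proving that $W$ is admissible for $Q$ itself by means of a diagonal functor $G$ repeating the linear map on each parallel arrow -- is also valid (the socle again only sees the common kernel), but it forces you to re-prove the analogue of Lemma \ref{Special_cases lem embedding functor} for $G$, which the $F$-route avoids; what it buys in return is the slightly stronger conclusion that $W$ is an admissible normal form for $Q$, not just that the two monoid algebras are isomorphic.
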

\begin{proof}
By the assumption on $K$ we have $k\monoid{K}\cong \B_K$. In case (a) or (c), the assignment \linebreak\mbox{$\p{x}^{(K)}\mapsto \p{x}^{(Q)}$} for all $x\in K_0$ extends to a homomorphism $k\monoid{K}\longrightarrow k\monoid{Q}$. Its image is \linebreak linearly spanned by the monomials $\p{v}^{(Q)}$ with $v\in W$. But by the previous lemma, Lemma \ref{Special_cases lem embedding functor}, the \linebreak tuple $(\p{v}^{(Q)}\:|\:v\in W)$ is also linearly independent. Thus the assertions (a) - (c) hold for dimensional reasons. Meanwhile an admissible normal form associated with $Q$ in case (d) is given by $W\konk W'$. Therefore $k\monoid{Q}\cong \B_{Q}$ holds and (d) follows with Proposition \ref{specialcases prop BK and BQ}(d).
\end{proof}
\subsection{The linearly oriented Dynkin quiver of type $\mathbf{A}$}
We denote by $\Po_n$ the poset of the product order $\grn{n}$ on the powerset of $\ito{n}$ defined in the introduction. For two intervals $J$ and $I$ in $\ito{n}$ we define $J\groint I$ by requiring $\min J > \min I$  and  $\max J > \max I$, in particular $\{\max J, \max I\}\grn{n} \{\min J, \min I\}$. \label{specialcases_groint} The poset $\Po_n$ is in bijection with the set consisting of tuples of intervals $J_r \groint \ldots \groint J_1$ in $\ito{n}$.  For an interval $J= \{i,i+1,\ldots,j-1,j\}$ of positive integers let $J$ denote the word $i\konk i+1\konk \ldots\konk j-1\konk j$ as well. The monomial $X_J\in \B_{Q_n}$ is an idempotent. More precisely for all $k\in J$ we have $X_JX_k = X_J$ since $k$ is a source in the subquiver $Q_{k\konk k+1\konk\ldots \konk j-1\konk j}$ (Lemma \ref{Normalform-lem-generalized relations}). We will meet a generalisation of these idempotents to arbitrary finite quivers without oriented cycles to determine the radical of $\B_Q$ in the next section. 

\begin{prop} \label{Special cases  prop normal form of B_n lin.oriented}
An admissible normal form of $\B_{Q_n}$ is
\[
W_n \ldef \set{J_r\konk \ldots \konk J_1}{r\in \oto{n},\:\: J_r \groint \ldots \groint J_1 \:\text{ intervals in }\: \ito{n}}  
\]
\end{prop}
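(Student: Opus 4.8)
The plan is to check the three conditions in the definition of an admissible normal form for $W=W_n$. Condition~(1) is immediate, since the empty word (the case $r=0$) and every singleton $\{t\}=[t,t]$ (a length-one chain) belong to $W_n$, so $\{\emptyset\}\cup\ito{n}\subseteq W_n$. I would settle condition~(3) first, by computing on injectives, and keep the closure condition~(2) for last, since it is the combinatorial core.

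For condition~(3) I would evaluate each $\p{w}$ on the injective indecomposables. With the arrows of $Q_n$ written $s\to s+1$, the module $I_x$ is the interval representation on $[1,x]$, and the two Remarks of Subsection~\ref{subsection tools for (3)} give $\p{t}(I_x)=I_x$ for $x\neq t$ and $\p{t}(I_t)=I_{t-1}$, with $I_0\ldef 0$. Hence for an interval $J=[a,b]$ the idempotent $X_J$ sends $I_x$ to $I_{a-1}$ when $x\in J$ and fixes $I_x$ otherwise. Composing these collapses for $w=J_r\konk\ldots\konk J_1$ with $J_i=[a_i,b_i]$, and using that $a_1<\ldots<a_r$ and $b_1<\ldots<b_r$, I would observe that an index, once collapsed by the first interval that contains it, drops strictly below all later intervals and is then fixed; so $\p{w}(I_x)\cong I_{f_w(x)}$, where $f_w(x)=a_i-1$ for $i=\min\{\,j\mid x\in J_j\,\}$ and $f_w(x)=x$ if $x$ lies in no $J_j$. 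The chain $w$ can be recovered from $f_w$: the set $\set{x}{f_w(x)<x}$ is $\bigcup_iJ_i$, the distinct values $a_i-1$ taken on it return the left endpoints $a_i$, and $b_i=\max\set{x}{f_w(x)=a_i-1}$ (here $b_i>b_j$ for $j<i$ guarantees $b_i\notin J_j$). Thus $w\mapsto f_w$ is injective, so for distinct $v,w\in W_n$ some index $x$ has $f_v(x)\neq f_w(x)$, whence $\p{v}(I_x)\ncong\p{w}(I_x)$ and $V=I_x$ witnesses~(3).

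The main work is condition~(2): for every $w=J_r\konk\ldots\konk J_1\in W_n$ and every $t\in\ito{n}$ the element $X_wX_t$ should again lie in $B_{W_n}$. I would prove this by an explicit rewriting that uses only consequences of Lemma~\ref{Normalform-lem-generalized relations}: the absorptions $X_JX_k=X_J=X_kX_J$ for $k\in J$, the extension $X_{[a,b]}X_{b+1}=X_{[a,b+1]}$, the left absorption $X_{[c,b]}X_{[a,b]}=X_{[a,b]}$ for $a\le c\le b$, and the commutation $X_JX_t=X_tX_J$ whenever $t\le\min J-2$ or $t\ge\max J+2$. The procedure is to move $X_t$ leftward through $X_{J_1},X_{J_2},\ldots$ by commutation until it first reaches an interval $J_i$ with $t\in[a_i-1,b_i+1]$, and then to apply a single local move: if $t\in[a_i,b_i]$ then $X_{J_i}X_t=X_{J_i}$ and $X_t$ vanishes ($w'=w$); if $t=a_i-1$ then $\{t\}$ enters the chain as a new singleton interval; if $t=b_i+1$ then $J_i$ is replaced by $[a_i,b_i+1]$; and if $X_t$ commutes past every interval, it is placed as a new singleton at the appropriate spot in the chain.

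The hard part, and the one step demanding genuine care, is to verify that every output is again a \emph{strictly} decreasing $\groint$-chain, and to control the single cascade that can arise. This is exactly where the strict monotonicity of the endpoints is used: the commutations are legitimate because $t$ lies strictly outside $[a_j-1,b_j+1]$ for each interval it crosses; an inserted singleton $\{t\}$ satisfies $J_i\groint\{t\}\groint J_{i-1}$ because $b_{i-1}<t<a_i$; and after an upward extension of $J_i$ the chain remains strict unless $b_{i+1}=b_i+1$, the one exceptional collision, which the left absorption resolves by letting $[a_i,b_i+1]$ swallow $J_{i+1}$ --- and since then $b_{i+2}>b_{i+1}$, no further collision can occur. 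Once~(2) holds, the discussion following the definition of an admissible normal form shows that $B_{W_n}$ is a $k$-basis of $\B_{Q_n}$ and that $\psi_{Q_n}$ is an isomorphism, so $W_n$ is an admissible normal form.
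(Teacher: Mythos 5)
Your proof is correct, and the overall strategy coincides with the paper's: condition (1) is immediate, condition (3) is settled by computing the action on the injective indecomposables $I_0,\ldots,I_n$ via $\p{[a,b]}(I_x)=I_{a-1}$ for $x\in[a,b]$, and condition (2) is the combinatorial core. The two places where you genuinely diverge are both in the execution. For (3), the paper does not prove injectivity of $w\mapsto f_w$ outright; it picks a minimal interval $J_a$ on which the two chains differ and compares the actions on the two test modules $I_{\max J_a}$ and $I_{\max L_b}$, whereas you reconstruct the whole chain from the function $f_w$ (endpoints from the fibres over the values $a_i-1$, using $b_i\notin J_j$ for $j<i$); your version is cleaner and yields slightly more, but both hinge on the same computation on injectives. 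For (2), the paper states a general closed multiplication formula for $X_{L_s\ldots L_1}X_J$ with $J$ an arbitrary interval (introducing the indices $y$ and $z$) and dismisses its verification as a lengthy induction on $s$; you instead exploit that condition (2) only requires right multiplication by a single generator $X_t$ and give a direct rewriting procedure (commute, absorb, insert a singleton, or extend an interval with at most one cascade resolved by left absorption). Your argument is more elementary and self-contained for the purpose of the proposition; what the paper's formula buys in exchange is an explicit description of products with arbitrary interval idempotents. Your case analysis is complete: the three local cases $t\in J_i$, $t=a_i-1$, $t=b_i+1$ partition $[a_i-1,b_i+1]$, the inserted singleton satisfies $J_i\groint\{t\}\groint J_{i-1}$ because $t\geq a_{i-1}$ forces $t>b_{i-1}+1$, and the single possible collision $b_{i+1}=b_i+1$ is correctly terminated by $b_{i+2}>b_{i+1}$.
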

So $\B_{Q_n}$ and $k\monoid{Q_n}$ are isomorphic and have the dimension $C_{n+1}=\frac{1}{n+2}\binom{2(n+1)}{n+1}$ by characterization 6.19.aa. in \cite{Stanley} of the  $n+1^{\mathrm{th}}$ Catalan number $C_{n+1}$. 
\begin{proof}
Condition (1) is easily verified by looking at $r=0$ and $r=1$. To check condition (2) we just state a multiplication rule: 
\bigskip
\newline\textit{Let $n\in \NN$. For all intervals $J$ and $L_1\kleint \ldots \kleint L_s$ in $\ito{n}$ and for $L_0\ldef \emptyset \rdef L_{s+1}$  we define indices $y=y(J,L_1,\ldots,L_s)$ and $ z=z(J,L_1,\ldots,L_s)$ by:
\[
\begin{aligned}
 y &\ldef \begin{cases}
         \max \set{x\in \ito{s}}{J\groint L_x \text{ and } J\cap(L_x +1) = \emptyset}\quad&\text{ if } L_1\kleint J \text{ and } J\cap(L_1 +1) = \emptyset\\
	 0\quad&\text{ else}
	 \end{cases}\\
\text{and} &\\
 z &\ldef \begin{cases}
	y+1\quad&\text{ if } J\kleint L_{y+1} \\                  
        \min \set{x\in \{y+2,\ldots,s\}}{ L_{y+1} \cup J \kleint L_x}\quad&\text{ if } J \nkleint L_{y+1} \text{ and } J\cup L_{y+1} \kleint L_s\\
	s+1\quad&\text{ else, thus if } J \nkleint L_{y+1} \text{ and } J\cup(L_{y+1}) \nkleint L_s\\
	    \end{cases}\\
\end{aligned}
\]
Then the product of $X_{ {L_1}\ldots  {L_s}}$ and $X_{J}$ is given by:
\[
\begin{aligned}
X_{ {L_s}\ldots  {L_1}}X_{J}&= X_{ {L_s}\ldots  {L_z} {(J\cup \bigcup_{t=y+1}^{z-1} L_t)}  {L_y}\ldots  {L_1}}
&=\begin{cases}
  X_{ {L_s}\ldots  L_{y+1} {J}  L_{y}\ldots  {L_1}}\quad&\text{if } J\kleint L_{y+1}\\  
  X_{ {L_s}\ldots  {L_z} {(J\cup L_{y+1})}  {L_y}\ldots  {L_1}}\quad&\text{otherwise}
  \end{cases}\quad 
\end{aligned}
\]
}
The proof is a lengthy but straightforward induction on $s$ requiring case by case analysis. 
Now we turn towards condition (3). For this it suffices to consider the injective indecomposable $Q_n$-representations $I_j$ for $j\in \oto{n}$. We set $I_0\ldef 0$. 
Because of $\p{j}I_j=I_{j-1}$ we get inductively on $r$ for all intervals $J_r\groint\ldots\groint J_1$ in $\ito{n}$:
\[
 \p{J_r\ldots J_1}I_j = \begin{cases}
                         I_{\min{J_a} -1} \quad &\text{if } j\in \bigcup_{x=1}^r J_x \text{, where } a\in \ito{r} \text{ is min. with } j\in J_a\\
			I_j &\text{otherwise}	  
\end{cases}
\]
Let $J_r\groint \ldots \groint J_1$ and $L_s\groint\ldots \groint L_1$ be two distinct tuples of intervals in $\ito{n}$. Without loss of generality we can pick an index $a\in\ito{r}$ minimal with respect to $J_a$ being distinct from $L_1,\ldots, L_s$. Then we have:
\begin{align*}
 \p{J_r\ldots J_1}( I_{\max J_a}) &= I_{\min{J_a} -1}\\
\shortintertext{and}
\p{L_s\ldots L_1}( I_{\max J_a}) &= \begin{cases}
                         I_{\min{L_b} -1} \: &\text{if } \max J_a \in \bigcup_{x=1}^s L_x \text{, where } b\in \ito{s} \text{ is min. with } \smash{\max J_a \in L_b}\\
			I_{\max J_a} &\text{otherwise}	  
\end{cases}
\end{align*}
We are done if $\max J_a \notin \bigcup_{x=1}^s L_x$. So we assume $\max J_a \in \bigcup_{x=1}^s L_x$. We are done as well if $\min{L_b} \neq \min{J_a}$. Thus let $\min{L_b} = \min{J_a}$. Now we consider the action on $I_{\max L_b}$:
\begin{align*}
\p{L_s\ldots L_1}I_{\max L_b} &= I_{\min L_b -1} = I_{\min J_a -1} \\
\shortintertext{and}
\p{J_1\ldots J_r} I_{\max L_b} &= \begin{cases}
        I_{\min J_c-1} \quad &\text{if } \max L_b \in \bigcup_{x=1}^r J_x \text{, where } c \in \ito{r} \text{ is min. with } \smash{\max L_b \in J_c}\\
	I_{\max L_b} \quad & \text{otherwise}
       \end{cases}\\
\end{align*}
In the first case (for $\max L_b$ ) the inequality holds because of  $\max J_c \geq \max L_b > \max J_a$, so $J_c\groint J_a$, hence $\min J_c > \min J_a$. In the second one it holds due to $\max{L_b} > \min L_b-1$.
\end{proof}
The defining relations for $\monoid{Q_n}$ are the same as for the monoid $\mathrm{NDPF}_{n+1}$ of non-decreasing parking functions (see \cite{Hivert} or \cite{MazorchukKiselman}) which is generated by the functions $\pi_j\ldef\binom{1\ldots j-1\, j \, j+1\ldots n+1}{1\ldots j-1\, j \, j\hphantom{+1}\ldots n+1}$. In fact, if $\beta$ is the bijection $\{I_0,I_1,\ldots,I_n\}\longrightarrow \ito{n+1}, I_j\mapsto j+1$ the proof of condition (3) yields the isomorphism $\monoid{Q_n}\longrightarrow \mathrm{NDPF}_{n+1}, \p{v}\mapsto \beta\p{v}\beta^{-1}$, which is the extension of the assignment $\p{j}\mapsto \pi_j$.  Thus the full subcategory of the category of covariant functors on $\mo{kQ}$ containing the elements $\p{t_1}\p{t_2}\ldots\p{t_r}$ for $r\geq 0$ and vertices $t_1,t_2,\ldots, t_r\in Q_0$ categorifies the monoid $\mathrm{NDPF}_{n+1}$.  
\subsection{Gluing on a sink}
We start with $n$ finite acyclic and pairwise disjoint quivers $Q(1),\ldots, Q(n)$ and pick some vertices \linebreak$p_1^{(1)},\ldots,p_{r_1}^{(1)} \in Q(1),\ldots, p_1^{(n)},\ldots,p_{r_n}^{(n)} \in Q(n)$. Then we consider the quiver $Q$ arising from gluing these quivers together on a new vertex $s$ over new arrows $\alpha_i^{(j)}\colon p_i^{(j)}\rightarrow s$. The shape of $Q$ is sketched below. We will moreover denote by $\Q(j)$ the full subquiver of $Q$ with the vertices $Q(j)_0\cup \{s\}$. 
\begin{equation*}
 \begin{tikzpicture}%[description/.style={fill=white,inner sep=1pt},back line/.style={dotted},cross line/.style={preaction={draw=white, -,line width=6pt}}]
            \matrix(m) [matrix of math nodes, font=\scriptsize, row sep=0.2cm, column sep=0.4cm, text height=1.5ex, text depth=0.25ex]
            { 		&		&		&		&s				&  		&		&		&		&		&\\
			&p_1^{(1)}	&		&		&				&  		&		&		&		&p_{r_n}^{(n)}	&\\
			& 		& p_{r_1}^{(1)} &\hphantom{h}	&\hphantom{h}			&		&		& p_1^{(n)}	&		&		&Q(n) \\
			&Q(1)		&		&		&				& 		&		&		&		&		&\\
			&		&		&		&				&	 	&		&		&		&		&\\};
          \path[->]
 	    (m-2-2) edge (m-1-5) 
 	    (m-3-3) edge (m-1-5)	
	    (m-2-10) edge (m-1-5)	
 	    (m-3-8) edge (m-1-5);
	    \path
             (m-3-8) edge[loosely dotted] (m-2-10)
             (m-3-4.south east) edge[loosely dotted] (m-3-5.south west)
	    (m-2-2) edge[loosely dotted] (m-3-3);
\def\firstellipse{(0,0) ellipse (1.3cm and 1.7cm)}
\def\secondellipse{(0,0) ellipse (1.2cm and 2.1cm)}
\begin{scope}[rotate=40]
\draw[densely dotted,xshift=-2.8cm,yshift=2cm]\firstellipse;
\end{scope};
\begin{scope}[rotate=90]
\draw[densely dotted,xshift=0cm,yshift=-2.8cm]\secondellipse;
\end{scope};
   \end{tikzpicture}
\end{equation*}
\begin{lem}\label{Normalformen das Lemma Induktionsargument}
Assume that for every $j\in \ito{n}$ we have admissible normal forms $\W(j)\subseteq \free{Q(j)}$ and $\tW(j)\subseteq \free{\Q(j)}$ associated with $Q(j)$ and $\Q(j)$ respectively, such that  $\W(j)\subseteq \tW(j)$ holds and $s$ appears at most once in any word of $\tW(j)$.
Then an admissible normal form $W$ associated with $Q$ consists of the $\prod_{j\in\ito{n}} |\W(j)| + \prod_{j\in\ito{n}} (|\tW(j)|-|\W(j)|)$ words:
\begin{align*}
\text{(a)}\qquad w_1\konk w_2\konk\ldots\konk w_n \qquad &\text{with }\: w_j\in \W(j) \:\text{ for all }\: j\in \ito{n}\\
\shortintertext{and}
\text{(b)}\qquad y_1\konk y_2\konk \ldots\konk y_n \konk s \konk z_1\konk z_2\konk \ldots\konk z_n \qquad & \text{with }\: y_j\konk s \konk z_j \in \tW(j)\setminus\! \W(j)\: \text{ for all }\: j\in \ito{n}
\end{align*}
Thus $\B_Q \cong k\monoid{Q}$ holds. Moreover the dual assertions holds.
\end{lem}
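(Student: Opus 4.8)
The plan is to verify that the set $W$ described by (a) and (b) satisfies the three conditions of an admissible normal form; the isomorphism $\B_Q\cong k\monoid{Q}$ then follows from the discussion after the definition, and the dual assertion follows by applying everything to $Q^{\mathrm{op}}$ (where $s$ becomes a source and the second identity of Lemma \ref{Normalform-lem-generalized relations} replaces the first) together with the anti-isomorphism $\B_Q\cong\B_{Q^{\mathrm{op}}}$ of Proposition \ref{specialcases prop BK and BQ}(b). First I would record the two mechanisms driving every computation: since distinct $Q(i)$ share no arrow, $X_u$ and $X_{u'}$ commute whenever $u\in\free{Q(i)}$, $u'\in\free{Q(i')}$ with $i\neq i'$ (third identity of Lemma \ref{Normalform-lem-generalized relations}); and since $s$ is a global sink, $X_sX_wX_s=X_wX_s$ for every word $w$ (first identity). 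Because $Q(j)$ and $\Q(j)$ are full subquivers of $Q$, the algebras $\B_{Q(j)}$ and $\B_{\Q(j)}$ embed into $\B_Q$ by Proposition \ref{specialcases prop BK and BQ}(a), so all normal-form reductions performed inside $\W(j)$ and $\tW(j)$ are valid identities in $\B_Q$. I would also check that $\tW(j)\ohne\W(j)$ is exactly the set of words of $\tW(j)$ containing $s$: the $s$-free words of $\tW(j)$ form an admissible normal form for $Q(j)$ (closure cannot create an $s$, because any $s$-free word fixes the injective envelope $I_s$ of $S_s$ whereas any word containing $s$ sends $I_s$ to a module with vanishing $s$-component, by the explicit description of $\p{t}$), and since all admissible normal forms of $Q(j)$ have the same cardinality $\dim\B_{Q(j)}$ and contain $\W(j)$, they coincide with $\W(j)$. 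Finally I would establish the backbone identity
\[
X_{w_1}\cdots X_{w_n}X_s \;=\; X_{y_1}\cdots X_{y_n}\,X_s\,X_{z_1}\cdots X_{z_n},
\]
where $y_j\konk s\konk z_j$ denotes the $\tW(j)$-normal form of $w_j\konk s$, by induction on $n$ using $X_{w_j}X_s=X_{y_j}X_sX_{z_j}$ and the commutation of foreign blocks.

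\textbf{Conditions (1) and (2).} Condition (1) is immediate: all $w_j=\emptyset$ gives $\emptyset$, a single letter $t\in Q(j)_0$ arises from $w_j=t$ with the rest empty, and $s$ arises in (b) from all $y_j=z_j=\emptyset$, since $s\in\tW(j)\ohne\W(j)$. For closure I would treat four cases. Multiplying an (a)-word by $X_t$ with $t\in Q(j)_0$: commute $X_t$ next to $X_{w_j}$ and reduce $X_{w_j}X_t=X_{w_j'}$ with $w_j'\in\W(j)$, staying in (a). Multiplying an (a)-word by $X_s$: the backbone identity produces a (b)-word, each $y_j\konk s\konk z_j$ lying in $\tW(j)\ohne\W(j)$ because it contains $s$ (the injective argument above). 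Multiplying a (b)-word by $X_t$ with $t\in Q(j)_0$: commute the foreign blocks $X_{y_i},X_{z_i}$ ($i\neq j$) out of the way to isolate $X_{y_j}X_sX_{z_j}X_t$, reduce this to $X_{y_j'}X_sX_{z_j'}$ by closure of $\tW(j)$ (the result still contains $s$), and reassemble a (b)-word in which only the $j$-th block has changed. Multiplying a (b)-word by $X_s$: the sink relation gives $X_sX_{z_1}\cdots X_{z_n}X_s=X_{z_1}\cdots X_{z_n}X_s$, so the product equals $X_{y_1}\cdots X_{y_n}X_{z_1}\cdots X_{z_n}X_s$; grouping each component turns $X_{y_1}\cdots X_{z_n}$ into an (a)-word, and an (a)-word times $X_s$ is already handled.

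\textbf{Condition (3).} Given distinct $v,w\in W$ I would exhibit a distinguishing module. If exactly one of them contains $s$, then the explicit description of $\p{t}$ shows that $\p{v}^{(Q)}I_s$ and $\p{w}^{(Q)}I_s$ differ in their $s$-component (the $s$-free one fixes $I_s$, the other kills $S_s$). If both are of type (a), pick $j$ with $v_j\neq w_j$ and a $Q(j)$-representation $V$ with $\p{v_j}^{(Q(j))}V\ncong\p{w_j}^{(Q(j))}V$ from condition (3) for $\W(j)$; since the foreign factors $\p{v_i}^{(Q)}$ ($i\neq j$) fix every module supported on $Q(j)_0$ and $\p{v_j}^{(Q)}F\cong F\p{v_j}^{(Q(j))}$ by Lemma \ref{Special_cases lem embedding functor}(a), one gets $\p{v}^{(Q)}FV\cong F\p{v_j}^{(Q(j))}V\ncong F\p{w_j}^{(Q(j))}V\cong\p{w}^{(Q)}FV$. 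If both are of type (b), pick $j$ with $y_j\konk s\konk z_j$ distinct from its counterpart in $\tW(j)$, separate them by a $\Q(j)$-representation via condition (3) for $\tW(j)$, and run the same support-and-commutation bookkeeping (now using that foreign blocks fix modules supported on $Q(j)_0\cup\{s\}$) to collapse $\p{v}^{(Q)}FV$ to $F\,\p{y_j s z_j}^{(\Q(j))}V$, whence the two differ.

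\textbf{Counting and the main obstacle.} Distinct parameter tuples yield distinct words: the blocks occupy pairwise disjoint vertex sets, and in (b) the unique letter $s$ separates the $y$-blocks from the $z$-blocks, so each block is recoverable; types (a) and (b) are distinguished by the presence of $s$. Hence $|W|$ equals $\prod_j|\W(j)|+\prod_j(|\tW(j)|-|\W(j)|)$, $B_W$ is a basis, and $\psi_Q$ is an isomorphism. The hard part is the closure of the type-(b) words under right multiplication: because all $n$ components share the single letter $s$, one cannot reduce componentwise in a naive way, and the argument hinges on first commuting the foreign blocks aside and then invoking the sink relation $X_sX_wX_s=X_wX_s$ to reduce within a single $\tW(j)$, all while controlling that $s$ neither disappears nor multiplies.
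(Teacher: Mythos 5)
Your proposal is correct and follows essentially the same route as the paper: the paper verifies conditions (1) and (2) "straightforwardly using the generalized relations" and handles (3) by reducing (via the support/injective-envelope remarks of subsection 3.2) to two words of the same type, then locating a differing block $v_j\neq w_j$ in $\tW(j)$ and invoking Lemma \ref{Special_cases lem embedding functor}. You have merely made explicit the bookkeeping (the backbone identity, the commutation of foreign blocks, the sink relation for $s$, and the fact that $\tW(j)\ohne\W(j)$ is exactly the set of words containing $s$) that the paper leaves to the reader, and all of these details check out.
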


\begin{proof}
Conditions (1) and (2) are verified straightforwardly using the generalized relations. For condition (3) it suffices, by the remarks in subsection \ref{subsection tools for (3)}, to  consider two words $v\neq w \in W$ both being either of type (a) or of type (b). In each case there are an index $j\in \ito{n}$ and subwords $v_j, w_j\in \tW(j)$ of $v$ and $w$ respectively such that $v_j\neq w_j$ holds. Now employ the assumption on $\tW(j)$ and Lemma \ref{Special_cases lem embedding functor}.
\end{proof}
A direct application of this lemma gives an admissible normal form associated with the $m$-subspace quiver $T_m$, which is the connected quiver with exactly one sink $s$ and $m$ sources enumerated by $1,\ldots, m$: here $Q(j)$ corresponds just to the vertex $j$ and $\Q(j)$ to $j\rightarrow s$. An admissible normal form associated to the latter is $\{\emptyset, j, s, js, sj\}$ which contains the normal form associated with $j$, i.e.\ $\{\emptyset, j\}$. To fix an order on the vertices we denote for every subset $J= j_1<\ldots <j_k$ of $\ito{m}$ the word $j_1\konk\ldots \konk j_k$ with $w(J)$. Now an admissible normal form associated with $T_m$ has $2^m+3^m$ elements:
\[
 \set{w(J)}{J\subseteq\ito{m}}\: \cup\: \set{w(I)\konk s\konk w(J) }{I,J \subseteq \ito{m}, I\cap J = \emptyset}
\]
This can be extended to the star quiver, since we have admissible normal forms associated with its branches, i.e linearly oriented Dynkin quivers of type $A$. 

\subsubsection{Tree quivers with a specific orientation}
Furthermore, Lemma \ref{Normalformen das Lemma Induktionsargument} is the induction step for tree quivers with a specific orientation: we will call $Q$ an admissible tree quiver, if each crossing of $Q$, i.e.\ a vertex whose entry degree or exit degree is at least $2$, is either a sink or a source. (So the linearly oriented Dynkin quivers $D_4$ are not subquivers of $Q$.) One can endow every tree with an orientation to obtain an admissible tree quiver. In particular every tree with a bipartite orientation is an admissible tree quiver. Note also that every Dynkin quiver of type $A$ is an admissible tree quiver. 

\begin{theo}\label{Special cases theo normal form of admissable trees}
There is an admissible normal form associated with any admissible tree quiver $Q$. In particular the relations are defining for $\monoid{Q}$. 
\end{theo}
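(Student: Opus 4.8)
The plan is to induct on the number of vertices $|Q_0|$, using Lemma \ref{Normalformen das Lemma Induktionsargument} and its dual as the sole engine of the induction step, with the linearly oriented case of Proposition \ref{Special cases  prop normal form of B_n lin.oriented} as the non-trivial base. First I would record the structural dichotomy for admissible tree quivers. A crossing is by definition a sink or a source, and a connected acyclic tree quiver \emph{without} any crossing is precisely a directed path, i.e.\ a linearly oriented quiver of type $A$. Hence either $Q\cong Q_n$, in which case the set $W_n$ of Proposition \ref{Special cases  prop normal form of B_n lin.oriented} is the desired admissible normal form, or $Q$ possesses a sink-crossing or a source-crossing. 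Since $Q$ is a tree it is connected and has no multiple arrows, so the reductions of Propositions \ref{specialcases prop BK and BQ} and \ref{specialcases prop monoidK and monoidQ} are not needed; and by the dual half of Lemma \ref{Normalformen das Lemma Induktionsargument} it suffices to treat a sink-crossing $s$, the source case being symmetric.

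Given a sink-crossing $s$, deleting $s$ disconnects the tree into branches $Q(1),\ldots,Q(n)$ with $n\geq 2$, each joined to $s$ by a single arrow $p^{(j)}\to s$. Each branch $Q(j)$ and each augmented branch $\Q(j)=Q_{Q(j)_0\cup\{s\}}$ is again an admissible tree quiver, and because $n\geq 2$ both have strictly fewer vertices than $Q$. To invoke Lemma \ref{Normalformen das Lemma Induktionsargument} I need, for every $j$, compatible normal forms $\W(j)\subseteq\tW(j)$ for $Q(j)$ and $\Q(j)$ in which the gluing vertex $s$ occurs at most once. Once these are available the word $W$ is produced verbatim by the lemma, and condition (3) of an admissible normal form forces $\psi_Q$ to be an isomorphism; thus $k\monoid{Q}\cong\B_Q$ and the Hecke--Kiselman relations are defining for $\monoid{Q}$, as desired.

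The real content is that the plain induction hypothesis does not supply the compatibility $\W(j)\subseteq\tW(j)$ with $s$ occurring at most once. I would therefore strengthen the statement to: \emph{for every admissible tree quiver $R$ with a distinguished leaf that is a sink (resp.\ a source), there is an admissible normal form in which that leaf occurs at most once and whose words avoiding the leaf form an admissible normal form for $R$ with the leaf deleted.} In $\Q(j)$ the gluing vertex $s$ is exactly such a leaf-sink, so applying the strengthened hypothesis to the smaller quiver $\Q(j)$ yields $\tW(j)$ and, as its $s$-free part, the form $\W(j)$. The strengthened hypothesis holds in the base cases by inspection of the explicit normal forms: in $W_n$ the top vertex $n$ lies in at most one of the intervals $J_r\groint\ldots\groint J_1$, since their maxima strictly decrease, and in the single-arrow form $\{\emptyset,j,s,j\konk s,s\konk j\}$ the glued vertex occurs at most once.

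The step of this strengthened induction is where I expect the main obstacle, namely controlling the distinguished leaf $\sigma$ when it is buried deep inside a branch. The resolution I would pursue is to decompose not at an arbitrary crossing but at the \emph{nearest} crossing $c$ met when one walks from $\sigma$ into the tree. Since $\sigma$ is a leaf-sink, the segment from $\sigma$ to $c$ is a directed path oriented towards $\sigma$, so $c$ carries an outgoing arrow and hence, being a crossing, is forced to be a \emph{source}-crossing; one then applies the dual half of Lemma \ref{Normalformen das Lemma Induktionsargument} at $c$. The branch containing $\sigma$ is then a directed path, on which the $W_n$-argument guarantees that $\sigma$ occurs in at most one interval, while the control of $c$ in the other branches comes from the strengthened hypothesis applied to still smaller quivers. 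The two controls never interfere, and the $\sigma$-free part of the resulting $W$ restricts, branch by branch, to the normal form of $R$ with $\sigma$ deleted by the same $W_n$-property that $n$-free words of $W_n$ reconstitute $W_{n-1}$. Verifying conditions (1) and (2) for $W$ is then the routine bookkeeping already supplied by the generalized relations of Lemma \ref{Normalform-lem-generalized relations}.
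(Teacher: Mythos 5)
Your proposal is correct and follows essentially the same route as the paper: induct on $|Q_0|$ with $Q_n$ as the base case, decompose at a crossing via Lemma \ref{Normalformen das Lemma Induktionsargument} (and its dual), and reduce everything to the strengthened auxiliary claim about an admissible tree quiver extended by a distinguished leaf in which that leaf occurs at most once --- which is exactly the italicised statement the paper isolates and likewise proves by induction on the number of vertices. Your additional observations (that the leaf-free part of $\tW$ is automatically $\W$, and that choosing the crossing \emph{nearest} the distinguished leaf forces that leaf's branch to be linearly oriented, so the $W_n$ endpoint property controls it) merely fill in details the paper leaves implicit.
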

\begin{proof}\label{Normalformen zulaessige Baumkoecher}
The case $Q$ being $Q_n$ for some $n$ is already done. So assume $Q$ not to be a linear oriented Dynkin quiver of type $A$. In particular we can pick a crossing $s$ of $Q$. To apply Lemma \ref{Normalformen das Lemma Induktionsargument} we just have to check, whether the assumptions hold for the subquivers which are linked to $s$ by one arrow. These subquivers (and their extensions with $s$) are again admissible tree quivers. So it suffices to show: 
\newline\textit{Let $K$ be an admissible tree quiver, $y\in K_0$ and $\K$ an extension of $K$ by one (new) vertex $s$ and one (new) arrow $ y\leftarrow s$ or $y\rightarrow s$, so that $\K$ is again an admissible tree quiver. 
Then there are admissible normal forms $\W\subseteq \free{K}$ and $\tW\subseteq \free{\K}$ associated with $K$ and $\K$ respectively, such that $\W\subseteq \tW$ and $s$ appears at most once in any word of $\tW$.}
\newline This is proven by induction on the number of vertices of $K$ using Lemma \ref{Normalformen das Lemma Induktionsargument}. 
\end{proof}
The induction step, i.e.\ Lemma \ref{Normalformen das Lemma Induktionsargument}, provides a procedure for gaining an admissible normal form associated with $Q$. We illustrate this by the special case of bipartite Dynkin quivers $K_n$ of type $A_n$. Depending on whether $n$ is even or odd $K_n$ has up to anti-isomorphism one of the following shapes: 
\begin{equation*}
 \begin{tikzpicture}%[description/.style={fill=white,inner sep=1pt},back line/.style={dotted},cross line/.style={preaction={draw=white, -,line width=6pt}}]
            \matrix(m) [matrix of math nodes, row sep=0.3em, column sep=0.5em, font=\scriptsize]
             {& 2 &   & 4 &           &     		& \vphantom{n}n-1 	&  & &		&   & 2 &   & 4 &           &     & n-1 &\\
	      &   &   &   &\ldots    &      		&  			&  & &	\text{or}	&   &   &   &   &\ldots    &     &  	&	\\
             1&   & 3 &   &         & \vphantom{n-2}n-2 	& 			&n & &		&1  &   & 3 &   &         & n-2 	& 			&n\\};
            \path[->, font=\scriptsize]
	    (m-3-1) edge (m-1-2)
	    (m-3-3) edge (m-1-2)
	    (m-3-3) edge (m-1-4)
(m-1-7) edge (m-3-6) 
(m-1-7)	edge (m-3-8)
	    (m-3-11) edge (m-1-12)
	    (m-3-13) edge (m-1-12)
	    (m-3-13) edge (m-1-14)
(m-3-16) edge (m-1-17) 
(m-3-18) edge (m-1-17);
%  \node[center,font=\scriptsize] at (m-3-6) {$n-2$};  
%  \node[center,font=\scriptsize] at (m-1-7) {$n-1$};
%  \node[center,font=\scriptsize] at (m-3-16) {$n-2$};  
%  \node[center,font=\scriptsize] at (m-1-17) {$n-1$};
 \end{tikzpicture}
\end{equation*}
Admissable normal forms $\W_1\subseteq \W_2 \subseteq \W_3$ associated with $K_1\subseteq K_2 \subseteq K_3$ respectively are:
\begin{align*}
\{\emptyset, 1\} &\subseteq \{\emptyset, 1, 2, 1 2 , 2 1\} \subseteq \{{\emptyset},  {1},   {3},   {1 3},    {2},   {1 2},   {2 1},   {3 2},   {2 3},   {1 2 3},   {3 2 1},   {1 3 2},   {2 1 3}\}    
\end{align*}
For $ n\geq 4$ the vertex $n-1$ is a crossing and either a sink or a source. Inductively we have admissible normal forms $\W_{n-2} \subseteq \W_{n-1}$ associated with $K_{n-2}\subseteq K_{n-1}$ respectively such that $n-1$ appears at most once in any word of $\W_{n-1}$.  On the other hand we have admissible normal forms $\{\emptyset, n\} \subseteq \{\emptyset, n, n-1, n\konk n-1 , n-1\konk n\}$ associated with the quivers $n$ and ${n-1} \rightarrow n$ (or $n-1\leftarrow n$) respectively. Now the Lemma \ref{Normalformen das Lemma Induktionsargument} with $s=n-1$ yields the admissible normal form
\[
\W_n\ldef \W_{n-2} \:\cup\: \W_{n-2}\konk n \bigcup \W_{n-1}\setminus\! \W_{n-2} \bigcup n\konk(\W_{n-1}\setminus\! \W_{n-2}) \bigcup (\W_{n-1}\setminus\! \W_{n-2})\konk n
\]
which contains $\W_{n-1}$ and fulfils the condition on the appearance of $n$. Therefore the dimension $|\W_n|$ of ${k}\monoid{K_n}\cong \B_{K_n}$ can be calculated over the recurrence relation:
\[
|\W_{n}|= 2 |W_{n-2}| + 3(|W_{n-1}| - |W_{n-2}|) = 3|W_{n-1}| -|W_{n-2}|
\]
Hence $(|\W_j|)_{j\in\NN}$ corresponds to the partial sequence $(F_{2j+1})_{j\in\NN}$ of the Fibonacci-sequence \linebreak$(F_n)_{n\in \NN}$ with $F_1=F_2=1$.

\section{The Gabriel Quiver of $\mathbf{\B_Q}$}
As before let $k$ be a field and $Q$ a finite, acyclic quiver. Theorem \ref{extkoecher-theo-Extkoecher} is a direct consequence of the next two subsections. 

One remark before we start. In \cite{DHST} the authors determined among other things the radical and Gabriel quiver of the monoid algebra of a finite $\mathcal{J}$-trivial monoid. This could be applied here, because $\B_Q$ is -- as the monoid algebra of the Hecke-Kiselman monoid $HK_Q$ -- such a monoid algebra. We repeat the argument mentioned in $\cite{MazorchukKiselman}$: Since being \mbox{$\mathcal{J}$-trivial} is closed under quotients (see \cite{Pin}), it suffices to show, that $HK_Q$ is a quotient of a $\mathcal{J}$-trivial monoid. Now the Hecke-Kiselman monoid $HK_{K_n}$ associated to the quiver $K_n$ with $n$ vertices $\{1,\ldots,n\}$ and arrows $i\rightarrow j$ for each pair $i<j$ is the Kiselman semigroup and $\mathcal{J}$-trivial by \cite{MazorchukKudryavtseva}. Moreover $Q$ can be embedded in the quiver $K_n$ for $n\ldef |Q_0|$ by choosing an enumeration $\{1,\ldots, n\}$ of the vertices $Q_0$ such that $i\rightarrow j$ implies $i<j$. Thus there is the canonical projection introduced in \cite{MazorchukKiselman} from $HK_{K_n}$ onto $HK_{Q}$.

Here, we compute the radical and the Gabriel-quiver of $\B_Q$ directly just using the defining relations. 
\subsection{The simples and the radical of $\mathbf{\B_Q}$}\label{extkoecher Abschnitt Einfache und Radikal}
The structure of the simple modules are closely related to those of the $0$-Hecke algebra (see \cite{Norton} and \cite{Hivert}). For every subset $M$ of $Q_0$ we define $\E_M=({k},\delta_M)$ \label{extkoecher-die Einfachen} to be the (one-dimensional)  $\B_Q$-module given by the homomorphism $\delta_M\colon \B_Q \longrightarrow {k}\kong{}\End{k}{k}$ of algebras with $X_q\mapsto 1$ if $q\in M$ and  $X_q\mapsto 0$ otherwise for all $q\in Q_0$.  
In the sequel we compute the radical to show that this family $(\E_M)_{M\subseteq Q_0}$ of $2^{|Q_0|}$ simple modules represents all simple $\B_Q$-modules. For this we construct for each subset $M$ of $Q_0$ a specific monomial \mbox{$X_M \in \B_Q$} to describe a $k$-linear generating set of the radical. To this end we consider the inductively defined sets of sinks $\senke{M}{j}$ at level $j\in \NN$ associated to $M$:
\[
 \begin{aligned}
\senke{M}{0} &\ldef \set{q\in M}{ q \text{ is a sink in } Q_M}\\  
\vdots&\\
\senke{M}{j+1} &\ldef \set{q\in M}{q \text{ is a sink in } Q_{M\ohne(\senke{M}{0}\cup \senke{M}{1}\cup\ldots \cup \senke{M}{j})}}
 \end{aligned}
\]
Since $M$ is finite, there is a uniquely determined index $s(M)\ldef m$ such that \mbox{$\senke{M}{m}\neq \emptyset = \senke{M}{m+1}$} holds. Moreover $Q_{\senke{M}{j}}$ contains no arrows. Hence all $X_p$ and $X_q$ with $p,q \in \senke{M}{j}$ commute and we can thus define:
\[
X_M \ldef \prod_{q\in \senke{M}{m}} \!\!\!X_q  \:\:\ldots \prod_{q\in \senke{M}{0}} \!\!\!X_q
\]
Note that by the generalized relations $X_M$ is idempotent, since we have $X_q X_M = X_M = X_M X_q$ for every $q\in M$. 
\begin{prop}\label{extkoecher prop Radikal und alle Einfachen}
The radical $\rad{\B_Q}$  of $\B_Q$ is the $k$-linear span of \mbox{$\mathcal{M}\ldef\set{X_{\{w\}}-X_w}{w\in \free{Q}}$}. So $\B_Q$ is a basic algebra and $(\E_M)_{M\subseteq Q_0}$ is a representative system of its simple modules.
\end{prop}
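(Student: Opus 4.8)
The plan is to show two things: first that $\mathcal{M}$ spans a two-sided ideal whose quotient is semisimple (indeed a product of copies of $k$), and second that $\mathcal{M}$ lies in every maximal ideal, so that the span of $\mathcal{M}$ is exactly the radical. Since every $X_q$ satisfies $X_q^2=X_q$ and the defining relations are homogeneous in a suitable sense, the key structural input is the idempotent $X_M$ and the identity $X_qX_M=X_M=X_MX_q$ for $q\in M$ noted just above. I would first observe that for any word $w\in\free{Q}$ the element $X_w$ satisfies $X_{\{w\}}X_w=X_w=X_wX_{\{w\}}$, using that $\{w\}$ is the set of letters of $w$ and applying the generalized relations of Lemma \ref{Normalform-lem-generalized relations} repeatedly; this makes each generator $X_{\{w\}}-X_w$ of $\mathcal{M}$ behave like a ``defect'' relative to the idempotent attached to its support.

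The heart of the argument is to verify that $\langle\mathcal{M}\rangle_k$ is a two-sided ideal and that the induced maps $\delta_M$ annihilate it. For the latter, note $\delta_M(X_w)=1$ precisely when every letter of $w$ lies in $M$, i.e.\ when $\{w\}\subseteq M$, and $\delta_M(X_{\{w\}})$ agrees with this since $X_{\{w\}}$ has the same support; hence $\delta_M(X_{\{w\}}-X_w)=0$ for all $w$ and all $M$. This shows $\langle\mathcal{M}\rangle_k\subseteq\bigcap_M\ker\delta_M\subseteq\rad{\B_Q}$ once we know the $\E_M$ are simple modules (which they are, being one-dimensional). For the ideal property I would show that left- and right-multiplication of a generator $X_{\{w\}}-X_w$ by a generator $X_t$ again lands in the span of $\mathcal{M}$; the clean way is to rewrite $X_t(X_{\{w\}}-X_w)=(X_{\{t\konk w\}}-X_{t\konk w})-(X_{\{t\konk w\}}-X_t X_{\{w\}})$ and to check that $X_{\{t\konk w\}}-X_tX_{\{w\}}$ lies in $\mathcal{M}$, reducing everything to manipulations of the support idempotents via the generalized relations.

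The remaining and main obstacle is the reverse inclusion $\rad{\B_Q}\subseteq\langle\mathcal{M}\rangle_k$, equivalently that the quotient $\B_Q/\langle\mathcal{M}\rangle_k$ is semisimple of dimension $2^{|Q_0|}$. The strategy here is dimension counting: modulo $\mathcal{M}$ every monomial $X_w$ is congruent to its support idempotent $X_{\{w\}}$, so the quotient is spanned by the $2^{|Q_0|}$ elements $X_M$ for $M\subseteq Q_0$. I would then argue these images are linearly independent and multiply as orthogonal-like idempotents after a change of basis — concretely, the $\delta_M$ for distinct $M$ are distinct algebra homomorphisms to $k$, giving $2^{|Q_0|}$ inequivalent one-dimensional representations, so the semisimple quotient has dimension at least $2^{|Q_0|}$; combined with the spanning bound this forces equality and shows $\B_Q/\langle\mathcal{M}\rangle_k\cong k^{2^{|Q_0|}}$. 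Hence $\langle\mathcal{M}\rangle_k$ is exactly the radical, $\B_Q$ is basic, and $(\E_M)_{M\subseteq Q_0}$ is a complete irredundant list of simples. The delicate point I expect to spend the most care on is the closure of $\mathcal{M}$ under multiplication together with the linear-independence of the $X_M$ modulo $\mathcal{M}$, both of which rest on careful bookkeeping with the sink-stratification $\senke{M}{j}$ and the generalized relations.
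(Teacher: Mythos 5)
Your argument establishes only one of the two required inclusions. The parts that work --- each $\delta_M$ kills every generator $X_{\{w\}}-X_w$, the span $\mathcal{V}$ of $\mathcal{M}$ is a two-sided ideal, and $\B_Q/\mathcal{V}$ is spanned by the images of the $2^{|Q_0|}$ idempotents $X_M$ while the $2^{|Q_0|}$ pairwise distinct characters $\delta_M$ factor through it --- do give $\B_Q/\mathcal{V}\cong k^{2^{|Q_0|}}$ and hence $\rad{\B_Q}\subseteq \mathcal{V}$, the radical being the smallest ideal with semisimple quotient. (This is close to the paper's first step, which obtains the ideal property for free by showing directly that $\mathcal{V}=\bigcap_{M\subseteq Q_0}\Ann{\B_Q}{\E_M}$.) The gap is the converse inclusion $\mathcal{V}\subseteq\rad{\B_Q}$. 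You derive it from ``$\bigcap_M\ker\delta_M\subseteq\rad{\B_Q}$ once we know the $\E_M$ are simple'', but that inclusion points the wrong way: simplicity of the $\E_M$ only yields $\rad{\B_Q}\subseteq\bigcap_M\ker\delta_M$, and the inclusion you need is tantamount to the assertion that the $\E_M$ exhaust the simple $\B_Q$-modules --- which is part of what is being proved, so the step is circular. The dimension count does not rescue this: the bounds $\dim(\B_Q/\mathcal{V})\leq 2^{|Q_0|}$ and $\dim(\B_Q/\rad{\B_Q})\geq 2^{|Q_0|}$ are perfectly compatible with $\rad{\B_Q}\subsetneq\mathcal{V}$ and an extra simple block hiding inside $\mathcal{V}/\rad{\B_Q}$.

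What is missing is an argument that $\mathcal{V}$ is a nil (hence nilpotent) ideal; this is the paper's second and decisive step. Using the sink stratification $\senke{\{w\}}{0},\senke{\{w\}}{1},\ldots$ one proves $X_w^{\,s}=X_{\{w\}}$ for $s=s(\{w\})$ by induction on $s$, whence $(X_{\{w\}}-X_w)^{s}=0$; by the theorem of Wedderburn cited in the paper (\cite{Pierce}, 4.6) an ideal spanned by nilpotent elements is nilpotent and therefore contained in the radical. Without this, or some substitute showing that every simple module is annihilated by $\mathcal{V}$, the proof is incomplete. A smaller slip: your warm-up identity $X_{\{w\}}X_w=X_w=X_wX_{\{w\}}$ is false in general; the absorption goes the other way, namely $X_{\{w\}}X_w=X_{\{w\}}=X_wX_{\{w\}}$ (for $Q=1\rightarrow 2$ and $w=2\konk 1$ one computes $X_1X_2\cdot X_2X_1=X_1X_2\neq X_2X_1$). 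This does not affect your ideal-closure computation, which is correct as written.
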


\begin{proof}
Let $\mathcal{V}$ be the $k$-linear span of $\mathcal{M}$ and $\mathcal{I}$ be the intersection of the annihilators \linebreak\mbox{$\Ann{\B_Q}{\E_M}\ldef\set{a\in \B_Q}{\delta_M (a) = 0}$} with $M\subseteq Q_0$. Firstly we observe that $\mathcal{V}$ coincides with the ideal $\mathcal{I}$: by the definition of $\delta_M$  we have for an arbitrary element $b=\sum_{v \in \free{Q}} b_v X_v$ in $\B_Q$: 
\[
\delta_{M}(b) = \sum\nolimits_{v \in \free{Q}, \:\:\{v\} \subseteq M} b_v
\]
Thus $b$ lies in $\mathcal{I}$ if and only if $\sum_{v \in \free{Q},\:M=\{v\}}\: b_v \:= 0$ for all   $M\subseteq Q_0$, which is in turn equivalent to 
\[
b= \sum\nolimits_{M\subseteq Q_0}\sum\nolimits_{v\in \free{Q},M= \{v\}} b_v (X_v-X_M) \in \mathcal{V}
\] 
Secondly we show, that $\mathcal{M}$ consists of nilpotent elements. (Hence $\mathcal{V}$ is a nilpotent ideal by a theorem of Wedderburn (see \cite{Pierce}, 4.6)). For this we prove the equality $X_w^{s} = X_{\{w\}}$ for each word $w$ over $Q_0$ and $s\ldef s(\{w\})$ by an induction on $s$: if all the letters occuring in  $w$  correspond to sinks, $X_w$ already coincides in $\B_Q$ with $X_{\{w\}}$. So now assume $\senke{\{w\}}{1}\neq \emptyset$. Furthermore let $v$ be the subword of $w$, which arises from $w$ by canceling all sinks $x\in \senke{\{w\}}{0}$ in $w$. Since $s(\{v\})=s-1$, it follows inductively $X_w^{s} =  X_v^{s-1}X_w= X_{\{v\}}X_w= X_{\{w\}}$ by the generalized relations and the properties of $X_{\{v\}}$ respective $X_{\{w\}}$. Therefore the element $X_{\{w\}}-X_w\in \B_Q$ is nilpotent:
\[
(X_{\{w\}}-  X_w)^{s} = (-X_{\{w\}} +X_w^2)(X_{\{w\}}-  X_w)^{s-2}=  \ldots = (-1)^{s-1}(X_{\{w\}}-  X_w^{s})=0
\]
Now  $\mathcal{V}\subseteq \rad{ \B_Q} \subseteq \mathcal{I}$ follows from the different characterizations of the radical of a finite dimensional algebra.
\end{proof}

\subsection{The Gabriel quiver of $\mathbf{\B_Q}$} \label{extkoecher Abschnitt Berechnund des Gabriel Koechers}
We calculate the $k$-dimensions of the extension groups $\Ext{\B_Q}{1}{\E_M,\E_N}$, i.e.\\ the number of arrows from $[\E_M]$ to $[\E_N]$ in the Gabriel quiver $\extko{\B_Q}$ of $\B_Q$. Then the algebra $\B_Q$ is a quotient of the path algebra $k\extko{\B}$ by an ideal $I$ with $\rad{\B_Q}^2\subseteq I \subseteq \rad{\B_Q}^r$ for some $r\in \NN$. We will see that this ideal is zero for the $m$-subspace quiver as well as for some simply shaped quivers. We devote section 5 to the proof that $I$ is generated by the commutativity relations of  $\extko{\B_Q}$, if $Q=Q_n$ is the linearly oriented Dynkin quiver of type $A$. 

Since the simple modules are one dimensional, it suffices to determine the two dimensional $\B_Q$-modules. The calculations are similar to those for the $0$-Hecke-algebras (type $A$) -- as done for example in \cite{Fayers} with the difference that we have to respect the non-symmetry of the defining relations and the generalisation to finite, acyclic quivers. 

Let $M$ and $N$ be subsets of $Q_0$. We consider the characteristic tuples $(m_q\ldef\delta_M(X_q))_{q\in Q_o}$ and $(n_q\ldef \delta_N(X_q))_{q\in Q_0}$ over $0,1 \in k$. Then we call a tuple $a=(a_q)_{q\in Q_0}$ over $k$ (or a function $a\colon Q_0\longrightarrow k, q\mapsto a_q$) admissible or $(M,N)$-admissible, if the assignment 
\[
X_{q} \mapsto \begin{pmatrix} n_q & a_q \\ 0 & m_q  \end{pmatrix}\rdef A_q \in {k}^{2\times 2}
\]
extends uniquely to an homomorphism from $\B_Q$ to $k^{2\times 2}$. We receive a two-dimensional $\B_Q$-module $W(a,M,N)=W(a)$ and up to equivalence the short exact sequences $\eta$ in $\Ext{A}{1}{\E_M,\E_N}$  are the sequences $\eta_a$:
\begin{center}
        \begin{tikzpicture}[description/.style={fill=white,inner sep=2pt}]
            \matrix(m) [matrix of math nodes, row sep=2em, column sep=2em, text height=1.5ex, text depth=0.25ex]
            {\eta_a: & 	0 & \E_N & W(a) & \E_M & 0\\};
            \path[->,font=\scriptsize]
            (m-1-2) edge (m-1-3)
            (m-1-3) edge node[above]{$\begin{pmatrix} 1\\ 0 \end{pmatrix}$ } (m-1-4)
	    (m-1-4) edge node[above]{$\begin{pmatrix} 0 & 1 \end{pmatrix}$} (m-1-5) 
	    (m-1-5) edge  (m-1-6);
\end{tikzpicture}
    \end{center}
The dimension of $\Ext{\B_Q}{1}{\E_M,\E_N}$ could be described in terms of the following property for the set-theoretic differences $M\ohne N$ and $N\ohne M$: \label{extkoecher-Differenzmenge}
\begin{df}\label{extkoecher-stark verbunden}
Let $P$ and $R$ be two disjoint subsets of $Q_0$. We say that ``$P$ is strongly connected towards $R$`` and write $P\stver R$, if neither $P$ nor $R$ are empty and for every two vertices $p\in P$ and $r \in R$ there exists (at least) one arrow from $p$ to $r$ (in $Q_1$). \footnote{In this case the subquiver of $Q$ with the vertices $P\cup R$ and just those arrows from $Q_1$, which connect vertices from $P$ to vertices from $R$, is a completely bipartite quiver.}
\end{df} 
If $P$ is not strongly connected towards $R$ we write $P\nstver R$. Since $Q$ is acyclic, $P\nstver R$ holds if and only if $P=\emptyset$, $R=\emptyset$ or there are vertices $p\in P$ and $r\in R$, such that $p$ is a sink in $Q_{p\konk r}$. 
\begin{lem}
(a)\quad We have $\Ext{\B_Q}{1}{\E_M,\E_N} = 0$ if and only if $M\ohne N \nstver N\ohne M$ holds.\newline
(b)\quad We have $\dim_{k}\Ext{\B_Q}{1}{\E_M,\E_N} = 1$ if and only if $M\ohne N \stver N\ohne M$ holds.
\end{lem}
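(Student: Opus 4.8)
The plan is to compute $\Ext{\B_Q}{1}{\E_M,\E_N}$ as the space of admissible tuples modulo the ``inner'' ones and then read off its dimension from the shape of the matrices $A_q$. Throughout write $P\ldef M\ohne N$ and $R\ldef N\ohne M$.

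First I would make the identification $\Ext{\B_Q}{1}{\E_M,\E_N}\cong Z/B$ explicit, where $Z$ is the $k$-space of $(M,N)$-admissible tuples $a$ introduced above (the cocycles) and $B$ is the line of coboundaries. Comparing two extensions $\eta_a,\eta_{a'}$, an equivalence must be conjugation by a matrix $\left(\begin{smallmatrix}1&c\\0&1\end{smallmatrix}\right)$ compatible with the inclusion $\binom{1}{0}$ of $\E_N$ and the projection $\left(\begin{smallmatrix}0&1\end{smallmatrix}\right)$ onto $\E_M$; a direct computation gives $A'_q=\left(\begin{smallmatrix}1&c\\0&1\end{smallmatrix}\right)A_q\left(\begin{smallmatrix}1&c\\0&1\end{smallmatrix}\right)^{-1}$, that is $a'_q=a_q+c(m_q-n_q)$. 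Thus $\eta_a\sim\eta_{a'}$ iff $a-a'$ lies on the line spanned by $(m_q-n_q)_q$, and since the $\eta_a$ exhaust all extensions, the assignment $a\mapsto[\eta_a]$ is a surjective $k$-linear map $Z\to\Ext{\B_Q}{1}{\E_M,\E_N}$ with kernel exactly this line. Hence $\dim_k\Ext{\B_Q}{1}{\E_M,\E_N}=\dim_k Z-1$ whenever $M\neq N$, while the group is $0$ for $M=N$.

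Next I would pin down $Z$ by imposing the defining relations of $\B_Q$ on the $A_q$. The idempotent relation $A_q^2=A_q$ forces $a_q=0$ for $q\notin P\cup R$ and classifies the generators: $A_q=\left(\begin{smallmatrix}1&0\\0&1\end{smallmatrix}\right)$ on $M\cap N$, $A_q=0$ off $M\cup N$, while $A_p=\left(\begin{smallmatrix}0&a_p\\0&1\end{smallmatrix}\right)$ for $p\in P$ and $A_r=\left(\begin{smallmatrix}1&a_r\\0&0\end{smallmatrix}\right)$ for $r\in R$. Relations involving a generator sent to $0$ or to the identity are automatic, so only the pairwise relations inside $P$ and $R$ remain. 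A short computation gives $A_pA_{p'}=A_p$ and $A_{p'}A_p=A_{p'}$, so the always-valid braid relation already forces $a_p=a_{p'}$ (and commutativity does the same when there is no arrow); likewise all $a_r$ with $r\in R$ coincide. Hence $Z$ sits inside $k^2$ and is governed by a single pair $(\alpha,\beta)$ with $a_p\equiv\alpha$ on $P$ and $a_r\equiv\beta$ on $R$.

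The heart of the argument, and the step I expect to be most delicate, is the cross relation between $p\in P$ and $r\in R$, where the asymmetry of the relations of $\B_Q$ becomes decisive. Here $A_pA_r=0$ but $A_rA_p=\left(\begin{smallmatrix}0&\alpha+\beta\\0&0\end{smallmatrix}\right)$, and I would check case by case that an arrow $r\to p$ or the absence of an arrow each impose $\alpha+\beta=0$ (via the applicable third relation, resp.\ commutativity), whereas an arrow $p\to r$ imposes nothing at all, since both the braid relation and the relevant third relation then read $0=0$. Consequently $\alpha+\beta=0$ is forced unless every pair $(p,r)\in P\times R$ carries an arrow $p\to r$, i.e.\ unless $P\stver R$. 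Therefore $\dim_k Z=2$ exactly when $P\stver R$, giving $\dim_k\Ext{\B_Q}{1}{\E_M,\E_N}=1$; otherwise $Z$ collapses onto the coboundary line and $\dim_k\Ext{\B_Q}{1}{\E_M,\E_N}=0$. Treating the degenerate cases $P=\emptyset$ or $R=\emptyset$ (where $P\nstver R$ holds and $Z$ again equals $B$) separately then yields both (a) and (b).
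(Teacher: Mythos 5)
Your argument is correct and follows essentially the same route as the paper: classify the $(M,N)$-admissible tuples via the shapes of the idempotent matrices $A_q$, observe that the braid/third relations force $a$ to be constant on $M\setminus N$ and on $N\setminus M$, and note that the cross relation $A_pA_rA_p=A_rA_p$ forces $\alpha+\beta=0$ exactly when some pair $(p,r)$ lacks an arrow $p\to r$, so that the cocycle space collapses onto the coboundary line $\langle (m_q-n_q)_q\rangle_k$ unless $M\setminus N\stver N\setminus M$. The only presentational difference is that you invoke $k$-linearity of $a\mapsto[\eta_a]$ (Baer sum) to get $\dim\Ext{\B_Q}{1}{\E_M,\E_N}=\dim_k Z-1$, whereas the paper proves the needed facts by hand via explicit equivalences $\eta_a\sim\eta_0$ and $\eta_a\sim e^{-1}\eta_b$; both are sound.
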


\begin{proof}
Firstly we determine the $(M,N)$-admissible functions. Let  $a\colon Q_0\longrightarrow k, q\mapsto a_q$ be a function. The shape of the assigned $A_q$ depends on whether $q\in M\cap N$,\: $q\in Q_0\!\setminus\!(M\cup N) $,\: $q\in M \!\setminus\! N$\: or \: $q\in N\!\setminus\! M$ and is as follows
\[
A_q=\begin{pmatrix} 1 & a_q \\ 0 & 1 \end{pmatrix} \text{,}\quad A_q=\begin{pmatrix} 0 & a_q \\ 0 & 0 \end{pmatrix} \text{,}\quad A_q=\begin{pmatrix} 0 & a_q \\ 0 & 1 \end{pmatrix} \text{,}\quad A_q=\begin{pmatrix} 1 & a_q \\ 0 & 0 \end{pmatrix}
\]  
respectively. If $a$ is an admissable function, then all these matrices are idempotent, i.e.\ $a_q=0$ for all $q\in M\cap N$ and for all $q\in Q_0\!\setminus\!(M\cup N)$. Therefore every $(M,N)$-admissible function lies in
\[
\mathcal{F}\ldef \set{a\colon Q_0\rightarrow k}{a|_{M\cap N} = 0\,\text{ and }\quad a|_{Q_0\ohne (M\cup N)} = 0}  
\]
and thus it suffices to consider the remaining relations, as for example $A_pA_qA_p=A_qA_pA_q$, just for all $p,q \in M \!\setminus\! N \cup N\!\setminus\! M$. 
For example for any two elements $p,q$ in $M \!\setminus\! N$ we conclude $a_p=a_q$ from the conditions, that $A_pA_q$ equals one of the products $A_qA_pA_q$ or $A_qA_p$.                                                                                                                                                                                                                                                                                                               Similar considerations show that if $M\ohne N \stver N\ohne M$ holds then the set of all admissible functions is
\[
\set{a \in \mathcal{F}}{\exists\, c_M, c_N\in k:   \quad a|_{M\setminus N} = c_M \id_{M\setminus N}\,\text{ and }\quad a|_{N\setminus M} = c_N \id_{N\setminus M}} 
\]
Whereas in the case $M\ohne N \nstver N\ohne M$ the existence of $M\!\setminus\! N \ni q \leftarrow p \in N\!\setminus\! M$ yields the equality $A_pA_q = A_pA_qA_p$, i.e. $ a_q+a_p=0 $, hence the set of admissible functions is
\[
\set{a \in \mathcal{F}}{\exists c\in k:   \quad a|_{M\setminus N} = c \id_{M\setminus N}\quad \wedge \quad a|_{N\setminus M} = - c \id_{N\setminus M}} 
=  \langle (m_q-n_q)_{q\in Q_0} \rangle_{k}
\]
So $M\ohne N \nstver N\ohne M$ holds if and only if every $(M,N)$-admissible function lies in \mbox{$\langle (m_q-n_q)_{q\in Q_0} \rangle_{k}$}. Hence it suffices to show that $\Ext{\B_Q}{1}{\E_M,\E_N} = 0$ holds iff every $(M,N)$-admissible function lies in \mbox{$\langle (m_q-n_q)_{q\in Q_0} \rangle_{k}$}. For this let $\eta_0$ denote the trivial short exact sequence and $W(0)$ its middle term; in particular $W(0)\cong \E_M\oplus \E_N$. We show for every admissible function $a$ the equivalence:
\[
 (*)\quad \eta_a\sim \eta_0 \Longleftrightarrow a\in \langle (m_q-n_q)_{q\in Q_0} \rangle_{k}
\]
If $c\in k$ and $(a_q = c m_q- cn_q)_{q\in Q_0}$ an admissible function, then a $\B_Q$-homomorphism from $W(a)$ to $W(0)$, which additionally provides $\eta_a\sim \eta_0$,  is given by the left multiplication with $\begin{pmatrix} 1 & c \\ 0 & 1 \end{pmatrix}$. On the other hand every homomorphism $\Phi\colon W(a)\longrightarrow W(0)$ providing $\eta_a \sim \eta_0$ is given by a matrix $\begin{pmatrix} 1 & c \\ 0 & 1 \end{pmatrix}$ for a constant $c\in k$ by the commutativity of the corresponding diagram. Since $\Phi$ is in particular a $\B_Q$-homomorphism, it follows $a_p + c m_p = c n_p$ for all $p\in Q_0$. Hence (a) is proved.

Statement (b) follows from (a) if one shows that any two non-trivial short exact sequences are linearly dependent. So let $a$ and $b$ be $(M,N)$-admissible functions such that $[\eta_0]\neq [\eta_a]\neq [\eta_b]\neq [\eta_0]$ holds. By $(*)$ and the previous thoughts there exist $c\neq c'$ resp. $d\neq d'$ in $k$  with
\[
 a|_{M\setminus N} = c \id_{M\setminus N}\:\:\text{and }\:\: a|_{N\setminus M} = c' \id_{N\setminus M}\:\:\text{resp. }\:\:b|_{M\setminus N} = d \id_{M\setminus N}\:\:\text{and }\:\:b|_{N\setminus M} = d' \id_{N\setminus M}
\] 
Then $e\ldef {d+d'}/{c+c'} \neq 0$ and the left multiplication with $\begin{pmatrix} e & c'e-d' \\ 0 & 1\end{pmatrix}$ is a $\B_Q$-isomorphism from $W(a)$ to $W(b)$ providing 
 \[
\eta_a\sim e^{-1} \eta_b\colon  0\longrightarrow \E_N \overset{\begin{pmatrix} e & 0 \end{pmatrix}}{\longrightarrow} W(b) \overset{\begin{pmatrix} 0\\ 1 \end{pmatrix}}{\longrightarrow} \E_M \longrightarrow 0
\]
\end{proof}
\subsection{Properties of the Gabriel quiver}\label{subsection properties of Gabriel quiver}
Let $Q$ be a finite acyclic quiver. For this section we abbreviate notation: instead of $[\E_M]$ we just write $M$. Accordingly, the set of vertices of the Gabriel quiver is from now on just the powerset $\pot{Q_0}$ of $Q_0$ . In this notation there is (exactly) one arrow from $M\in \pot{Q_0}$ to $N\in \pot{Q_0}$ in $\extko{\B_Q}$, if $M\ohne N$ is strongly connected towards $N\ohne M$ (w.r.t. $Q$). In particular, the Gabriel quiver has no loops. Obviously, $Q$ can be embedded in $\extko{\B_Q}$. Besides $Q^{op}$ is isomorphic to the full subquiver of $\extko{\B_Q}$ with the vertices $\set{Q_0\ohne \{p\}}{p\in Q_0}$. In general, we have for all $M$ and $N$:
\[
 M \rightarrow N \in \extko{\B_Q} \Longleftrightarrow Q_0\ohne N \rightarrow Q_0\ohne M \in \extko{\B_Q}  
\]
Therefore the map $\pot{Q_0}\longrightarrow \pot{Q_0},\; M\mapsto Q_0\ohne M$ induces an involution $\iota$ on $\extko{\B_Q}$. In particular we have in the case that $\B_Q$ is already isomorphic to $k\extko{\B_Q}$: $\B_Q\kong{} \B_{Q}^{op} \kong{} \B_{Q^{op}}$.
Now we look at special cases and some examples and end this chapter with general observations.
\begin{itemize}
\item The Gabriel quiver of $\B_{Q_n}$ is described by the following equivalence:
\begin{center}
\textit{There is one arrow $M\rightarrow N$ in $\extko{\B_{Q_n}}$ if and only if there exists exactly one index $i\in \ito{n}$ with $M\ohne N =\{i\}$ and  $N\ohne M = \{i+1\}$.}
\end{center}
Therefore only equally large sets are connected in $\extko{\B_{Q_n}}$. Thus the Gabriel quiver of $\B_{Q_n}$ has at least $n+1$ connected components. 
Actually $\extko{\B_Q}$ has exactly $n+1$ as we will see. 
\item In the $m$-subspace quiver $T_m$ each subset of sources is strongly connected towards the only sink set $\{s\}$. Thus we have:
\begin{center}
\textit{There is one arrow $M\rightarrow N$ in $\extko{\B_Q}$ if and only if there exist two disjoint subsets $M'\neq \emptyset $ and $N'$ of $\ito{m}$ with $M = N'\cup M'$ and $N=N' \cup \{s\}$.}
\end{center} 
As in the first special case $\emptyset$ and $(T_m)_0$ are isolated vertices of the Gabriel quiver. Moreover each vertex $\emptyset\neq M\neq (T_m)_0\in\pot{Q_0}$ is either a source (if $M\subseteq \ito{m}$) or a sink (if $s\in M$). Hence there are no paths of length $\geq 2$. Consequently the algebra $k\extko{\B_Q}$ has radical square $0$ and is already isomorphic to $\B_Q$. In particular $\B_Q \cong \B_Q^{op}$. 
By the way the only arrow in $\extko{\B_Q}$ which is fixed under the above presented involution $\iota$ is $\ito{n} \rightarrow \{s\}$. 
\end{itemize}
We assume $Q$ to be connected. We call $q\in Q_0$ a successor of $p\in Q_0$, if there is a path in $Q$ from $p$ to $q$. By Theorem \ref{extkoecher-theo-Extkoecher} it follows straightforwardly that the sinks (sources) of the Gabriel quiver of $\B_Q$ are precisely the subsets of $Q_0$ which are closed under successors  (predecessor) in $Q$. \label{Gabriel quiver remark sources and sinks} Therefore we now know the projective (injectives) amongst the simple $\B_Q$-modules. 
We end this section with the proof of Proposition \ref{Gabriel prop connected components}. Successively applying the next lemma shows that the full subquiver $\extko{\B_Q}_j$ of $\extko{\B_Q}$ whose vertices are those subsets of $Q_0$ with exactly $j$ elements is connected for every $j\leq |Q_0|$: 
\begin{lem}
For every proper non-empty subset $A$ of $Q_0$, each $a\in A$ and each $b\in Q_0\ohne A$ there exists a walk between $A$ and \mbox{$A\ohne\{a\}\cup \{b\}$} in $\extko{\B_Q}$. 
\end{lem}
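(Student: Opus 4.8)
The plan is to realize the desired walk as a sequence of ``elementary swaps'' that slide one element of $A$ along a path of $Q$ from $a$ to $b$. First I would record the basic building block coming from Theorem \ref{extkoecher-theo-Extkoecher}: if $p,q\in Q_0$ are joined by an arrow of $Q$ (in either direction) and $M\subseteq Q_0$ satisfies $p\in M$, $q\notin M$, then for $N\ldef (M\ohne\{p\})\cup\{q\}$ we have $M\ohne N=\{p\}$ and $N\ohne M=\{q\}$; since one of the arrows $p\to q$ or $q\to p$ lies in $Q_1$, one of the relations $\{p\}\stver\{q\}$ or $\{q\}\stver\{p\}$ holds, so $M$ and $N$ are joined by an arrow in $\extko{\B_Q}$. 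Such a swap exchanges one element for an adjacent one and hence preserves cardinality, so every walk built from these swaps stays inside the full subquiver on the subsets of a fixed size -- which is exactly what the intended application to connectedness of $\extko{\B_Q}_j$ requires.

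Since $Q$ is connected, I would fix a simple (undirected) path $a=v_0,v_1,\ldots,v_k=b$ in $Q$ and prove the statement by induction on its length $k$, in the slightly more general form: \emph{for every subset $A$ with $v_0\in A$ and $v_k\notin A$ there is a walk in $\extko{\B_Q}$ from $A$ to $(A\ohne\{v_0\})\cup\{v_k\}$.} For $k=1$ the vertices $v_0,v_1$ are adjacent, $v_0\in A$ and $v_1\notin A$, so a single elementary swap does the job.

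For $k\geq 2$ the argument splits according to whether $v_1$ is occupied. If $v_1\notin A$, I would first swap $v_0$ for $v_1$ to reach $A_1\ldef (A\ohne\{v_0\})\cup\{v_1\}$ and then apply the induction hypothesis to the shorter path $v_1,\ldots,v_k$ (note $v_1\in A_1$ and, by simplicity of the path, $v_k\notin A_1$), whose output is exactly $(A\ohne\{v_0\})\cup\{v_k\}$. The genuinely delicate case is $v_1\in A$, where $v_0$ cannot be slid onto $v_1$ directly; here the key point is that the vertices of $A$ are indistinguishable, so instead of pushing $v_0$ forward I would first push the occupant of $v_1$ out of the way. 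Concretely, the induction hypothesis applied to $v_1,\ldots,v_k$ (using $v_1\in A$, $v_k\notin A$) yields a walk from $A$ to $A_2\ldef (A\ohne\{v_1\})\cup\{v_k\}$; now $v_1$ is free while $v_0\in A_2$, so a single swap of $v_0$ for $v_1$ produces $(A_2\ohne\{v_0\})\cup\{v_1\}=(A\ohne\{v_0\})\cup\{v_k\}$, as required. Applied to the chosen path with the given $A$, this yields the walk asserted in the lemma.

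I expect the only real obstacle to be precisely this occupied-vertex case: the naive idea of sliding $a$ step by step along the path breaks down as soon as an intermediate vertex already lies in $A$, and the resolution is to exploit that we only care about the resulting set, not about which element travels, so that a blocking occupant can itself be evacuated towards $b$ by the induction hypothesis before the final swap. The bookkeeping that the composed sets are the claimed ones, and that all of them have cardinality $|A|$, is then routine.
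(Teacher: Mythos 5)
Your proof is correct and follows essentially the same strategy as the paper's: transport along a path in $Q$ from $a$ to $b$ via elementary swaps across arrows of $Q$, and when the next vertex is already occupied, first evacuate that occupant towards $b$ by the induction hypothesis before performing the swap. The only difference is bookkeeping: you induct on the length of a simple path and resolve one vertex at a time, whereas the paper inducts on the number of changes between $A$ and $Q_0\ohne A$ along the walk and writes out the single-change case as one explicit zig-zag walk in $\extko{\B_Q}$; both reduce to the same swaps.
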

\begin{proof}
In the sequel we write $x\wa y$, if the vertices $x$ and $y$ are connected by an arrow. Since $Q$ is connected, there is a walk between any two vertices $a$ and $b$ in $Q_0$:
\[
a = x_0 \wa x_1 \wa  \ldots \wa x_r \wa  x_{r+1} = b 
\]
Without loss of generality we can assume, that $x_0, \ldots, x_{r+1}$ are pairwise disjoint. Based on such a walk we construct inductively on the number of the changes from $A$ to $Q_0\ohne A$, i.e.\ on the number $n$ of indices $j\in \oto{r+1}$ with $x_j\in A$ and $x_{j+1}\in Q_0\ohne A$, a walk in $\extko{\B_Q}$ between $A$ and \mbox{$A\ohne\{a\}\cup \{b\}$}.
\newline\case{$n=1$:} Let $a\in A\subseteq Q_0$ and $b\in Q_0\ohne A$, such that there is a walk in $Q$ of the following kind: 
\begin{align*}
&\qquad\underbrace{a\wa x_1 \wa \ldots \wa x_{j-1}}_{\in A} \wa \underbrace{x_j \wa \ldots \wa x_{r}\wa b}_{\in Q_0\ohne A} 
\shortintertext{Then we obtain the following walk in the Gabriel quiver by Theorem \ref{extkoecher-theo-Extkoecher}:}
A &\wa  A\ohne \{x_{j-1}\}\cup \{x_j\} \wa A\ohne \{x_{j-1}\}\cup \{x_{j+1}\}\wa \ldots 		
\wa  A\ohne \{x_{j-1}\}\cup \{x_r\} \wa A\ohne \{x_{j-1}\}\cup \{b\} \\
& \wa A\ohne \{x_{j-2}\}\cup \{b\} \wa \ldots \wa A\ohne \{x_{1}\}\cup \{b\}\wa A\ohne \{a\}\cup \{b\}
\end{align*}
\case{$n\rightarrow n+1$:} Now let $a\in A\subseteq Q_0$ und $b\in Q_0\ohne A$ such that, there is a walk in $Q$ over pairwise disjoint vertices between $a$ and $b$ with more than one change between $A$ and $Q_0\ohne A$. Such a walk is of the kind:
 \[
\begin{aligned}
\underbrace{a= x_0\wa\ldots \wa x_{j-1}}_{\in A} &\wa \underbrace{x_j \wa \ldots \wa x_{l-1}}_{\in Q_0\ohne A} 
\wa \underbrace{x_l}_{\in A} \wa \ldots \wa \underbrace{x_{r+1} = b }_{\in Q_0\ohne A}
\end{aligned}
\]
with $j-1\geq 0$ and $l-1\geq j$. By the induction hypothesis there exists in $\extko{\B_Q}$ a walk between $A$ and \mbox{$A\ohne \{x_l\} \cup \{b\} \rdef B$}. Since all $x_0,\ldots,x_{r+1}$ are pairwise disjoint the following walk is a walk with just one change (now from $B$ to $Q_0\ohne B$): 
 \[
\begin{aligned}
\underbrace{a\wa x_1 \wa \ldots \wa x_{j-1}}_{\in B} &\wa \underbrace{x_j \wa \ldots \wa x_{l-1} \wa x_l}_{\in Q_0\ohne B}
\end{aligned}
\]
Again by the induction hypothesis there is a walk in the Gabriel quiver between $B$ and $B\ohne\{a\} \cup \{x_l\} = A\ohne\{a\} \cup \{b\}$ which finishes our walk from $A$ to $A\ohne\{a\} \cup \{b\}$.
\end{proof}
\begin{proof}[Proof of Proposition \ref{Gabriel prop connected components}.]
As we observed above, the Gabriel quiver of $\B_{Q_n}$ has at least $n+1$ connected components. Assertion (a) follows from the last lemma. Now let $Q$ be distinct from any $Q_n$. Therefore $Q$ has a subquiver of the form $x_1\rightarrow s\leftarrow x_2$ or  $x_1\leftarrow s \rightarrow x_2$. In each case the subsets  $\{x_1,x_2\}$ and $\{s\}$ are strongly connected. Consequently there is an arrow in the Gabriel quiver between $\{x_1,x_2\}$ and $\{s\}$. Thus $\extko{\B_Q}_2$ and $\extko{\B_Q}_1$ are connected. The involution $\iota$ yields an arrow in $\extko{\B_Q}$ between the subsets $Q_0\ohne\{x_1,x_2\}$ and $Q_0\ohne\{s\}$, which connects $\extko{\B_Q}_{n-2}$ and $\extko{\B_Q}_{n-1}$. Now for each subset $D\subseteq Q_0\ohne\{x_1,x_2,s\}$ there is an arrow in $\extko{\B_Q}$ between $D\cup\{x_1,x_2\}$ and $D\cup\{s\}$ linking the subquivers $\extko{\B_Q}_{|D|+1}$ and $\extko{\B_Q}_{|D|+2}$. Hence $\extko{\B_Q}_1, \ldots, \extko{\B_Q}_{n-1}$ are connected. Meanwhile $\extko{\B_Q}_0=\bullet_{\emptyset}$ and $\extko{\B_Q}_{|Q_0|}=\bullet_{Q_0}$.
\end{proof}

\section{The monoid algebra attached to $\mathbf{Q_n}$}
In this section let $R$ be a field. Recall that $\Po_n$ denotes the poset of the product order $\grn{n}$ defined in the introduction.  We first introduce the elements of $\A{n}\ldef R\monoid{Q_n}$ to state the precise isomorphism between the incidence algebra $\Inc{\Po_n}$ and $\A{n}$. Then we prove the stated properties in the subsections 5.1, 5.2 and 5.3. 

As is well-known, the incidence algebra $\Inc{\Po_n}$ is the free $R$-space over $X_{(J,I)}$ with $J\grn{n} I$ endowed with the multiplication given by $X_{(K,J)}X_{(J',I)} = X_{(K,I)}$ if $J=J'$ and $X_{(K,J)}X_{(J',I)}=0$ otherwise. It is the path algebra of the Hasse diagramm of the poset $\Po_n$ modulo the ideal which is generated by the commutativity relations (e.g.\ see \cite{Ringel}).   

As we have already seen $\A{1}\subseteq \A{2}\subseteq \A{3}\subseteq \ldots$ is a tower of algebras. Thus an inductive description of elements is possible. Local properties such as idempotency or orthogonality are preserved, in contrast to global ones such as centrality or being the unit element $1$ in $\A{n}$, when viewing elements in a greater algebra $\A{n+k}$.

The heart of the definition of the idempotents in $\A{n}$ corresponding to the idempotents $X_{(J,J)}$ in $\Inc{\Po_n}$ are the inductively (on $n$) defined elements $\y{1}{n}, \y{2}{n},\ldots ,\y{n}{n}$ in $\A{n}$. We start with $\y{1}{1}\ldef \p{1}$ and set for all $k\in \ito{n}$:  
\begin{align*}
\y{k}{n} \ldef\begin{cases}
                 \y{1}{n-1}-\p{n}\y{1}{n-1}+ \p{n} \quad &\text{if } k=1 \\[3pt]
                 \y{k}{n-1}-\p{n}\y{k}{n-1}+ \y{k-1}{n-1}\p{n} \quad &\text{if } 2\leq k \leq n \\[1pt]
                 0 \quad &\text{if } k>n \\   
                \end{cases}
\end{align*}
Each of these elements generates an ideal which is closely related to the admissible normal form given in \ref{Special cases  prop normal form of B_n lin.oriented} (see Corollary \ref{Dynkin Cor Ideal generators}). Their properties are listed in Lemma \ref{Idempotente Lemma Eigenschaften der y}. From them we conclude that the idempotents in $\A{n}$ corresponding to the connected components of $\Inc{\Po_n}$ are given by:  
\[
 \z{0}{n} \ldef 1-\y{1}{n}, \quad\ldots,\quad  \z{k}{n} \ldef \y{k}{n} - \y{k+1}{n},\quad \ldots, \quad\z{n}{n}\ldef \y{n}{n} - \y{n+1}{n}=\y{n}{n}
\]
Furthermore we consider the inductively defined elements $\g{J}{n}\in \A{n}$ for all $\emptyset\neq J \subseteq \ito{n}$ starting with $\g{\{1\}}{1} \ldef \p{1}$:
\[
\g{J}{n} \ldef \begin{cases}
	      \g{J}{n-1}-\p{n}\g{J}{n-1} \quad &\text{if } n\notin J\\
	      \p{n} \quad &\text{if } \{n\}= J\\
	      \g{J\ohne \{n\}}{n-1}\p{n} \quad &\text{if } n \in J\neq\{n\}\\ 
	       \end{cases}
\]
\begin{theo} \label{Dynkin complete pw orthogonal idempotents}
A complete system of pairwise orthogonal idempotents of $\A{n}$ consists of
\[
 \f{\emptyset}{n}\ldef\z{0}{n}\quad\text{and}\quad  \f{J}{n} \ldef \g{J}{n}\z{|J|}{n}\quad \text{with} \quad \emptyset\neq J\subseteq \ito{n}  
\]
\end{theo}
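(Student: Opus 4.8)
The plan is to verify the three defining properties of a complete system of orthogonal idempotents — idempotency $\f{J}{n}^2=\f{J}{n}$, orthogonality $\f{J}{n}\f{K}{n}=0$ for $J\neq K$, and completeness $\sum_{J\subseteq\ito{n}}\f{J}{n}=1$ — by induction on $n$, exploiting the tower $\A{1}\subseteq\A{2}\subseteq\cdots$. The central tool is that, since $n$ is the unique sink of $Q_n$, Lemma \ref{Normalform-lem-generalized relations} yields
\[
\p{n}\,a\,\p{n}=a\,\p{n}\qquad\text{for every }a\in\A{n-1},
\]
together with $\p{n}^2=\p{n}$ and $\p{n}\p{j}=\p{j}\p{n}$ for $j\leq n-2$; these collapse every word in which two occurrences of $\p{n}$ enclose material from $\A{n-1}$, and they already make the bare $\g{J}{n}$ idempotent by a short induction on the three defining cases. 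The base case $n=1$ is immediate, since $\f{\emptyset}{1}=1-\p{1}$ and $\f{\{1\}}{1}=\p{1}$.

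I would first separate the grading by cardinality. Invoking the properties of the $\y{k}{n}$ recorded in Lemma \ref{Idempotente Lemma Eigenschaften der y} — that they are idempotent and form a descending chain, $\y{k}{n}\y{l}{n}=\y{l}{n}\y{k}{n}=\y{\max(k,l)}{n}$ with $\y{0}{n}=1$ and $\y{n+1}{n}=0$ — one obtains at once that $\z{0}{n},\dots,\z{n}{n}$ is a complete family of pairwise orthogonal idempotents. From $\f{J}{n}=\g{J}{n}\z{|J|}{n}$ and $\z{|J|}{n}^2=\z{|J|}{n}$ the identity $\f{J}{n}\z{|J|}{n}=\f{J}{n}$ is free, so the crux is the companion relation
\[
\z{|J|}{n}\g{J}{n}\z{|J|}{n}=\g{J}{n}\z{|J|}{n},
\]
stating that $\f{J}{n}$ lies entirely in the $\z{|J|}{n}$-component. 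Granting it, orthogonality of $\f{J}{n}$ and $\f{K}{n}$ for $|J|\neq|K|$ is automatic, because $\f{J}{n}\f{K}{n}=\f{J}{n}\z{|J|}{n}\z{|K|}{n}\f{K}{n}=0$ by orthogonality of the $\z$'s; everything then reduces to a fixed cardinality level.

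The inductive heart is the level-$k$ statement: for fixed $k$ the $\f{J}{n}$ with $|J|=k$ are orthogonal idempotents with $\sum_{|J|=k}\f{J}{n}=\z{k}{n}$; summed over $k$ (using $\f{\emptyset}{n}=\z{0}{n}$ and $\sum_k\z{k}{n}=1$) this also yields completeness. I would run the induction with the clean recursion $\z{k}{n}=(1-\p{n})\z{k}{n-1}+\z{k-1}{n-1}\p{n}$, read off from the recursion for the $\y{k}{n}$, together with the three-case recursion for $\g{J}{n}$. Splitting the $k$-subsets by whether $n\in J$ — substituting $\g{J}{n}=(1-\p{n})\g{J}{n-1}$ when $n\notin J$ and $\g{J}{n}=\g{J\ohne\{n\}}{n-1}\p{n}$ when $n\in J$ — one expands the products $\g{J}{n}\z{k}{n}\g{K}{n}\z{k}{n}$ and collapses them with the sink relation above, reducing each identity to the level-$k$ and level-$(k-1)$ identities over $\A{n-1}$, which hold by induction.

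I expect the main obstacle to be precisely this reduction: the bare elements $\g{J}{n}$ are idempotent but \emph{not} mutually orthogonal — it is the factor $\z{|J|}{n}$ that simultaneously forces orthogonality and projects $\g{J}{n}$ onto its level — so the mixed products arising when $J\ni n$ meets $K\not\ni n$ require repeated use of $\p{n}\,a\,\p{n}=a\,\p{n}$ and of $\p{n}\p{j}=\p{j}\p{n}$ to gather all occurrences of $\p{n}$ before any cancellation becomes visible. The most delicate single point is the level-$k$ completeness $\sum_{|J|=k}\g{J}{n}\z{k}{n}=\z{k}{n}$, which demands a telescoping over the partition of the $k$-subsets by membership of $n$ that matches the two summands of the $\z{k}{n}$-recursion; I would isolate this combinatorial cancellation as a separate lemma — closely tied to the normal form of Proposition \ref{Special cases  prop normal form of B_n lin.oriented} and to Corollary \ref{Dynkin Cor Ideal generators} — before assembling the final family of $2^{n}$ idempotents.
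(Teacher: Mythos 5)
Your strategy coincides in essentials with the paper's: induction on $n$ along the tower $\A{n-1}\subseteq \A{n}$, separation by cardinality via the central pairwise orthogonal idempotents $\z{0}{n},\ldots,\z{n}{n}$, and reduction of each level-$k$ identity to levels $k$ and $k-1$ over $\A{n-1}$ using the sink relations and the recursions for $\g{J}{n}$ and $\y{k}{n}$. The difference lies in where you locate the difficulty, and there you go astray on two points. First, your ``crux'' $\z{|J|}{n}\g{J}{n}\z{|J|}{n}=\g{J}{n}\z{|J|}{n}$ is immediate, since Lemma \ref{Idempotente Lemma Eigenschaften der y} already gives the centrality of the $\y{k}{n}$ and hence of the $\z{k}{n}$. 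Second, your assertion that the bare $\g{J}{n}$ are idempotent but not mutually orthogonal, so that the factor $\z{|J|}{n}$ must ``force'' orthogonality, is false within a fixed cardinality level --- and the paper's proof consists precisely of the statement you are denying: one shows by induction on $n$ that $\set{\g{J}{n}}{J\subseteq\ito{n},\,|J|=k}$ is already a set of pairwise orthogonal idempotents summing to $\y{k}{n}$. This is where the sink relations bite: for $a\in\A{n-1}$ one has $\p{n}a\p{n}=a\p{n}$, hence $(1-\p{n})a\p{n}=0$ and $(1-\p{n})a(1-\p{n})=(1-\p{n})a$, so the mixed products with $n\in J$, $n\notin K$ (or conversely) vanish outright and the remaining cases reduce cleanly to $\g{J'}{n-1}\g{K'}{n-1}=0$ at level $k$ or $k-1$. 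Only afterwards does one multiply by the central idempotent $\z{k}{n}$ and use $\y{k}{n}\z{k}{n}=\z{k}{n}$ to obtain $\sum_{|J|=k}\f{J}{n}=\z{k}{n}$. (Across different cardinalities the $\g{J}{n}$ are indeed not orthogonal, e.g.\ $\g{\{2\}}{2}\g{\{1,2\}}{2}=\p{2}\p{1}\p{2}=\p{1}\p{2}\neq 0$, and there the $\z{k}{n}$ genuinely do the separating.) Your version, which carries $\z{k}{n}$ through the entire induction with the hypothesis phrased for the $\f{J}{n-1}$, does still close --- the extra central factor never obstructs the cancellations --- but it misattributes the source of the orthogonality and makes the level-$k$ computations, in particular the telescoping for completeness, heavier than they need to be.
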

(In \cite{DHST} a complete set of orthogonal idempotents for $\A{n}$ is given explicitly.) Now we turn towards those elements in $\A{n}$ corresponding to $X_{(J,I)}\in \Inc{\Po_n}$ with $J\neq I$. For this we define for all $n\geq j\geq i\geq 1$ the word:
\begin{align*}
 j\ru i \ldef j\konk (j-1)\konk\ldots \konk (i+1)\konk i \quad\in \ito{n}^*\\
 \shortintertext{and for all subsets $J=j_r>\ldots >j_1$ and $K= k_r>\ldots>k_1$ of $\ito{n}$ with $K\grn{n} J$ the monomial}
\p{(K,J)}\ldef \p{k_1 \ru j_1 \konk \ldots \konk k_r\ru j_r} \quad \in \A{n}\label{definition p(j,K)}
\end{align*}
The already introduced idempotent $\p{J}= \p{j_1\konk j_2 \konk \ldots \konk j_r}$  coincides with $\p{(J,J)}$; meanwhile $\p{(\emptyset,\emptyset)}= 1$ in $\A{n}$. The main difficulty is to show that the elements $\f{K}{n}\p{(K,J)}\f{J}{n}$ are distinct from $0$. 
\begin{theo}\label{Dynkin theorem aJI neq 0}
For every pair $K\grn{n} J$ we have: $ \f{K}{n}\p{(K,J)}\f{J}{n} \neq 0$.
\end{theo}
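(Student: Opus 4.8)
The plan is to detect non-vanishing by letting $\A{n}$ act on explicit modules. The monoid $\monoid{Q_n}$ permutes the finite set $\{I_0,I_1,\dots,I_n\}$ of injective indecomposables: by the remarks in Subsection~\ref{subsection tools for (3)} one has $\p{t}I_t=I_{t-1}$ and $\p{t}I_x=I_x$ for $x\neq t$ (with $I_0\ldef0$). Extending this action $R$-linearly turns the free module $V_n\ldef\bigoplus_{m=0}^{n}R[I_m]$ into an $\A{n}$-module and yields an algebra homomorphism $\Theta\colon\A{n}\longrightarrow\End{R}{V_n}$. It then suffices to produce, for each pair $K\grn{n}J$, a basis vector $[I_m]$ with $\Theta(\f{K}{n}\p{(K,J)}\f{J}{n})[I_m]\neq0$, since the algebra element cannot then be zero. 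A key simplification is that each descending block acts as a clean shift: a direct computation shows that $\p{k\ru j}$ sends $[I_x]\mapsto[I_{x-1}]$ for $j\leq x\leq k$ and fixes the remaining $[I_x]$, so that $\Theta(\p{(K,J)})$ is the product of the shifts coming from the blocks $k_i\ru j_i$.

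First I would compute, by induction on $n$ from the defining recursions, the operators $\Theta(\y{k}{n})$, $\Theta(\z{k}{n})$, $\Theta(\g{J}{n})$ and hence $\Theta(\f{J}{n})$ on $V_n$. Each inductive step involves only the generator $\p{n}$ and the already-known $\A{n-1}$-operators, and the compatibility of the injective action with the embedding $\A{n-1}\subseteq\A{n}$ (Lemma~\ref{Special_cases lem embedding functor}) lets the recursions be read off directly on basis vectors. With these operators in hand, the natural witness for $\f{K}{n}\p{(K,J)}\f{J}{n}$ is the top injective $[I_{\max K}]$: applying first $\f{J}{n}$, then the shifts coming from $\p{(K,J)}$, then $\f{K}{n}$, I would track the coefficient of a distinguished target $[I_{m'}]$ and show it equals $\pm1$.

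The boundary pair $(K,J)=(\emptyset,\emptyset)$ is not always detected by $V_n$ (there $\Theta(\z{0}{n})$ may vanish), so I would treat it separately. Here the element is $\f{\emptyset}{n}\p{(\emptyset,\emptyset)}\f{\emptyset}{n}=\z{0}{n}=1-\y{1}{n}$, and the one-dimensional module $\E_\emptyset$ affords the homomorphism $\delta_\emptyset\colon\A{n}\to R$ sending every $\p{t}\mapsto0$; an immediate induction on the recursion for $\y{1}{n}$ gives $\delta_\emptyset(\y{1}{n})=0$, whence $\delta_\emptyset(\z{0}{n})=1\neq0$.

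The main obstacle I expect is cancellation. Because the idempotents $\f{J}{n}$ are alternating sums (through the $\y{k}{n}$, $\z{k}{n}$ and $\g{J}{n}$), the operator $\Theta(\f{K}{n}\p{(K,J)}\f{J}{n})$ is an alternating combination of shift operators on $V_n$, and the real work is to rule out that these cancel on every basis vector. I would control this by a triangularity argument: ordering the $[I_m]$ by $m$ and using that each block $k_i\ru j_i$ strictly lowers the relevant index, one shows that on the chosen witness the distinguished coordinate receives a single uncancelled contribution of coefficient $\pm1$. The properties of the $\y{k}{n}$ and the pairwise orthogonality of the $\f{J}{n}$ (Theorem~\ref{Dynkin complete pw orthogonal idempotents}) are precisely what guarantee that the surrounding idempotent factors do not annihilate this contribution.
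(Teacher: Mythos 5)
There is a genuine gap, and it lies exactly where you located the ``main obstacle'': the module $V_n$ is far too small to detect the elements in question. Since $\{\f{J}{n}\}_{J\subseteq\ito{n}}$ is a \emph{complete} system of $2^n$ pairwise orthogonal nonzero idempotents, $V_n$ decomposes as $\bigoplus_J \f{J}{n}V_n$ with $\sum_J \dim_R \f{J}{n}V_n = \dim_R V_n = n+1$; hence at most $n+1$ of the $2^n$ idempotents act nontrivially on $V_n$, and for every pair $K\grn{n}J$ in which either $\f{K}{n}V_n=0$ or $\f{J}{n}V_n=0$ the element $\f{K}{n}\p{(K,J)}\f{J}{n}$ acts as \emph{zero} on $V_n$, so no witness vector exists. (More bluntly, $\Theta\colon\A{n}\to\End{R}{V_n}$ maps a $C_{n+1}$-dimensional algebra into an $(n+1)^2$-dimensional one.) You correctly caught the first instance of this: for $n=2$ one checks $\z{0}{2}=1-\p{1}-\p{2}+\p{2\konk 1}$ acts as zero on $V_2$. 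But that is not an isolated boundary effect. Already for $n=3$ at least four of the eight idempotents $\f{J}{3}$ kill $V_3$ (the composition factors of $V_n$ are only the $n+1$ simples $\E_{Q_0}$ and $\E_{Q_0\ohne\{m\}}$), so for instance several pairs $(J,J)$ with $J\neq\emptyset$ are undetectable, and your triangularity argument cannot produce an uncancelled $\pm 1$ there -- the cancellation you hoped to rule out genuinely happens. Enlarging $V_n$ to the linearization of the action on \emph{all} indecomposables does not help either: that set has only $\binom{n+1}{2}+1$ elements, still polynomial in $n$ against the Catalan-number dimension of $\A{n}$.

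The paper avoids this by never testing nonvanishing on a small module. It expands $\f{K}{n}\p{(K,J)}\f{J}{n}$ in the monomial basis $\set{\p{w}}{w\in W_n}$ of Proposition \ref{Special cases  prop normal form of B_n lin.oriented} and shows that, modulo the ideal $\Jk{|J|+1}{n}$, the coefficient of the distinguished basis monomial $\p{(K,J)}$ equals $1$: Lemma \ref{Dynkin lemma pJy(NJ)pJ annihilated by z} reduces the product to $\g{K}{n}\p{(K,J)}+\Jk{|J|+1}{n}$, Lemma \ref{Dynkin lemma pKJ not in Jk k+1} shows $\p{(K,J)}\notin\Jk{|J|+1}{n}$, and Lemma \ref{Idempotente Lemma mJK yNK hat nicht mJK} shows the correction term $\yJ{\N{K}{n}}\p{(K,J)}$ is supported away from $\p{(K,J)}$. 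The action on the injectives $I_j$ enters only as the tool for separating two \emph{distinct monomials} (condition (3) of the admissible normal form), not as the module on which the alternating sum itself is evaluated. If you want to keep a module-theoretic detection scheme, you would need a family of modules realizing all $2^n$ simples $\E_M$ together with the relevant extensions -- which essentially amounts to reconstructing $\Inc{\Po_n}$ and is circular.
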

Here we will need two more descriptions of the middle factor, including one inductive one (see subsection 5.2.2) and an inductive description of the elements $\g{J}{n}$ (see Lemma \ref{Dynkin lemma inductive gJn}). Along the way we get all we need to prove the main theorem:
\begin{main}\label{Dynkin maintheorem}
Let $R$ be a field and $n\in \NN$. The $R$-linear map $\Phi\colon \Inc{\Po_n} \longrightarrow \A{n} $ with \mbox{$\Phi(X_{(K,J)})= \f{K}{n}\p{(K,J)}\f{J}{n}$} for all $K\grn{n} J$ is an  isomorphism of algebras.  
\end{main}

One remark on the field $R$: in our proof we distinguish between two monomials $\p{v}$ and $\p{w}$ by comparing their action on the injective indecomposable $Q_n$ representations $I_0, I_1\ldots I_n$ over $R$. These steps can be replaced by comparing the non-decreasing parking functions $\pi_v$ and $\pi_w$ as functions on $1,\ldots, n+1$ (notation as in \cite{Hivert}). Since we just use the defining relations for $\monoid{Q_n}$ (see Proposition \ref{Special cases  prop normal form of B_n lin.oriented}) which are the same as for $\mathrm{NDPF}_{n+1}$ (see \cite{Hivert}), we could replace the field $R$ by an arbitrary commutative ring, as mentioned in \cite{Hivert}) and \cite{DHST}:
\begin{rem}
Let $R$ be a commutative ring. Then the $R$-algebras $R\mathrm{NDPF}_{n+1}$ and $\Inc{\Po_n}$ are isomorphic.
\end{rem}

\subsection{Proof of Theorem \ref{Dynkin complete pw orthogonal idempotents}}
We start with the properties of the elements $\y{1}{n},\ldots,\y{n}{n}$ and the direct conclusions for the elements  $\z{0}{n},\ldots,\z{n}{n}$:
\begin{lem}\label{Idempotente Lemma Eigenschaften der y}
The elements $\y{j}{n}$ are central in $\A{n}$ for all $j\in \ito{n}$. Thus the elements $\z{j}{n}$ are central. Meanwhile the idempotency and orthogonality of $\z{0}{n},\ldots,\z{n}{n}$ follows directly from:
\begin{align*}
\y{i}{n}\,\y{j}{n}&=\y{j}{n} \qquad \text{for all} \quad 0<i\leq j\leq n\\
\shortintertext{Each $\y{j}{n}$ is distinct from $0$ since for all subsets $J\subseteq\ito{n}$ with $|J|=j$ the following equation holds:}
\y{j}{n}\p{J}  &= \p{J}\\ 
\end{align*}
\end{lem}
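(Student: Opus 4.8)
The plan is to prove all four assertions simultaneously by induction on $n$, exploiting the defining recursion for the $\y{j}{n}$ and the embedding $\A{n-1}\subseteq\A{n}$. Throughout I set $p\ldef\p{n}$ and $q\ldef\p{n-1}$, adopt the convention $\y{0}{m}\ldef 1$, and rewrite the recursion uniformly as $\y{j}{n}=(1-p)\y{j}{n-1}+\y{j-1}{n-1}p$ for all $j\geq 1$. The one technical tool I expect to use everywhere is a sink-absorption identity: since $n$ is a sink of $Q_n$, Lemma \ref{Normalform-lem-generalized relations} gives $\p{n}\p{w}\p{n}=\p{w}\p{n}$ for every word $w$ over $\{1,\ldots,n-1\}$, hence $pap=ap$, i.e.\ $(1-p)ap=0$, for every $a\in\A{n-1}$. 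The base case $n=1$ is immediate, as $\A{1}$ is commutative and $\y{1}{1}=\p{1}$ is idempotent.

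For centrality I would show that $\y{j}{n}$ commutes with each generator $\p{m}$. For $m\leq n-2$ this is free: $\p{m}$ commutes with $p$ (distance $\geq 2$) and, by the inductive centrality, with $\y{j}{n-1},\y{j-1}{n-1}\in\A{n-1}$. For $m=n$ the sink-absorption identity collapses everything: $p\y{j}{n}=p\y{j-1}{n-1}p=\y{j-1}{n-1}p$ and $\y{j}{n}p=\y{j-1}{n-1}p$, so $\y{j}{n}$ commutes with $p$ and one records the by-product $\p{n}\y{j}{n}=\y{j-1}{n-1}\p{n}=\y{j}{n}\p{n}$. The case $m=n-1$ is the heart of the matter, and I expect it to be the main obstacle: after cancelling the terms handled by sink-absorption, commuting $\y{j}{n}$ with $q$ reduces to the commutator identity
\[
(pq-qp)\,\y{j}{n-1}=\y{j-1}{n-1}\,(pq-qp).
\]
I would prove this by unfolding the recursion one level further. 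Writing $c\ldef pq-qp$, the relations $\p{n}\p{n-1}\p{n}=\p{n-1}\p{n}=\p{n-1}\p{n}\p{n-1}$ give $c(1-q)=0$ and $qc=0$; substituting $\y{j}{n-1}=(1-q)\y{j}{n-2}+\y{j-1}{n-2}q$ on the left and $\y{j-1}{n-1}=(1-q)\y{j-1}{n-2}+\y{j-2}{n-2}q$ on the right therefore kills all but one summand on each side. Using $pD=Dp$ and the level-$(n-1)$ sink-absorption $qDq=Dq$ for $D\ldef\y{j-1}{n-2}\in\A{n-2}$, both sides reduce to the common value $pDq-qDpq$ (the case $j=1$ being the trivial check $cq=c$). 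This establishes centrality of every $\y{j}{n}$, and hence of each $\z{j}{n}$, since the latter are integer combinations of the $\y{j}{n}$ and $1$.

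The product relation $\y{i}{n}\y{j}{n}=\y{j}{n}$ for $0<i\leq j$ I would get by expanding the product of the two recursions into four terms: the term $\y{i-1}{n-1}p(1-p)\y{j}{n-1}$ vanishes by $p(1-p)=0$, and the term $(1-p)\y{i}{n-1}\y{j-1}{n-1}p$ vanishes by $(1-p)ap=0$ after rewriting its inner factor via the inductive product relation and centrality; the two surviving terms simplify, using $(1-p)ap=0$ once more and $\p{n}a\p{n}=a\p{n}$ together with the inductive product relation, to $(1-p)\y{j}{n-1}+\y{j-1}{n-1}p=\y{j}{n}$. Taking $i=j$ yields idempotency, and then the standard computation of $\z{k}{n}\z{l}{n}=(\y{k}{n}-\y{k+1}{n})(\y{l}{n}-\y{l+1}{n})$ together with $\y{k}{n}\y{l}{n}=\y{\max(k,l)}{n}$ (which follows from $\y{i}{n}\y{j}{n}=\y{j}{n}$ and centrality) gives orthogonality and idempotency of the $\z{k}{n}$.

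Finally, to see $\y{j}{n}\neq 0$ I would prove the slightly stronger absorption statement $\y{k}{n}\p{J}=\p{J}$ whenever $k\leq|J|$, again by induction on $n$ and now using the centrality just established, with a case distinction on whether $n\in J$ and, when $n\notin J$, on whether $n-1\in J$. If $n\in J$, writing $J=J'\cup\{n\}$ and using centrality with $\y{k}{n}p=\y{k-1}{n-1}p$ reduces the claim to $\y{k-1}{n-1}\p{J'}=\p{J'}$, which holds by induction since $k-1\leq|J'|$. If $n\notin J$ the claim reduces to $\y{k-1}{n-1}\p{n}\p{J}=\p{n}\p{J}$; when $n-1\notin J$ the factor $\p{n}$ commutes with $\p{J}$ and the inductive hypothesis applies directly, while when $n-1\in J$ one factors $\p{n}\p{J}=\p{J''}\p{n}\p{n-1}$ with $J=J''\cup\{n-1\}$ and invokes the inductive equality $\y{k-1}{n-1}\p{J''}=\p{J''}$. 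Specialising to $k=j=|J|$ gives $\y{j}{n}\p{J}=\p{J}\neq 0$ for every $J$ with $|J|=j$, which completes the lemma.
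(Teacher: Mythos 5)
Your proposal is correct and follows essentially the same route as the paper: induction on $n$ via the recursion $\y{j}{n}=(1-\p{n})\y{j}{n-1}+\y{j-1}{n-1}\p{n}$, the sink-absorption consequence $\p{n}a\p{n}=a\p{n}$ of Lemma \ref{Normalform-lem-generalized relations}, a one-level-deeper unfolding for the generator $\p{n-1}$, and the same four-term expansion for the product relation. The only (harmless) divergence is in the final absorption identity, where the paper carries $\y{|J|}{n}\p{n+1}\p{J}=\p{n+1}\p{J}$ as a simultaneous inductive claim while you instead strengthen to $\y{k}{n}\p{J}=\p{J}$ for all $k\leq|J|$ and split on whether $n-1\in J$; both work.
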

\begin{proof}
Exemplary in more detail, we prove by induction on $n$ that $\y{i}{n}$ is central for each $i\in \ito{n}$. 
For $n\in \{1,2\}$ the centrality of $\y{1}{1}=\p{1}$ and of  $\y{1}{2}=\p{1}+\p{2}-\p{2\konk 1}$ or $\y{2}{2}=\p{1\konk 2}$ in $\A{1}$ resp. in $\A{2}$ are direct consequences of the defining relations. So let $n>2$. For a generator $\p{j}$ of $\A{n}$ we have:
\[
 \p{j}\y{1}{n} -\y{1}{n}\p{j} =(\p{j}\y{1}{n-1}- \y{1}{n-1}\p{j}) + (-\p{j}\p{n}\y{1}{n-1}+\p{n}\y{1}{n-1}\p{j})+ (\p{j}\p{n} - \p{n}\p{j})
\]
For $j<n-1$ this adds up to $0$ by the induction hypothesis and the (commutativity) relations. We consider the cases $j\in \{n-1,n\}$ separately using the induction hypothesis for $j=n-1$:
\begin{align*}
 \p{n-1}\y{1}{n} -\y{1}{n}\p{n-1} 
   &= (-\p{n-1}\p{n} + \p{n}\p{n-1})\y{1}{n-1} + (\p{n-1}\p{n} - \p{n}\p{n-1})\\[2pt]
   &= (-\p{n-1}\p{n} + \p{n}\p{n-1})\y{1}{n-2} - (-\p{n-1}\p{n} + \p{n}\p{n-1})\p{n-1}\y{1}{n-2} \\[2pt]
   &\qquad  + (-\p{n-1}\p{n} + \p{n}\p{n-1})\p{n-1}  + (\p{n-1}\p{n} - \p{n}\p{n-1})\\[2pt]
   &= (-\p{n-1}\p{n} + \p{n}\p{n-1})\y{1}{n-2} - (-\p{n-1}\p{n} + \p{n}\p{n-1})\y{1}{n-2} \\ 
   &= 0
\shortintertext{and the generalized relations for $j=n$:}
 \p{n}\y{1}{n} -\y{1}{n}\p{n} 
    &= (\p{n}\y{1}{n-1}- \y{1}{n-1}\p{n}) + (-\p{n}\y{1}{n-1}+\y{1}{n-1}\p{n}) =0
 \end{align*}
Now we consider the elements $\y{2}{n},\ldots, \y{n}{n}$. For any $k > 1$ and a generator $\p{j}$ of $\A{n}$ we have: 
\[
\begin{aligned}
 \p{j}\y{k}{n} -\y{k}{n}\p{j} 
 &=(\p{j}\y{k}{n-1}- \y{k}{n-1}\p{j}) + (-\p{j}\p{n}\y{k}{n-1}+\p{n}\y{k}{n-1}\p{j})+ (\p{j}\y{k-1}{n-1}\p{n} - \y{k-1}{n-1}\p{n}\p{j})
 \end{aligned}
\]
Again for $j<n-1$, this adds up to $0$. Meanwhile we use the induction hypothesis and the generalized relations  for $j=n-1$:
\begin{align*}
\p{n-1}\y{k}{n} -\y{k}{n}\p{n-1}
&= (-\p{n-1}\p{n}+\p{n}\p{n-1})(\y{k}{n-2}-\p{n-1}\y{k}{n-2}+ \y{k-1}{n-2}\p{n-1}) \\[2pt]
&\qquad + (\y{k-1}{n-2}-\p{n-1}\y{k-1}{n-2}+ \y{k-2}{n-2}\p{n-1})(\p{n-1}\p{n} - \p{n}\p{n-1})\\[2pt] 
&= (-\p{n-1}\p{n}+\p{n}\p{n-1})\y{k-1}{n-2}\p{n-1} + (\y{k-1}{n-2}-\p{n-1}\y{k-1}{n-2})(\p{n-1}\p{n} - \p{n}\p{n-1})\\[2pt] 
&= -\p{n-1}\p{n}\y{k-1}{n-2}\p{n-1} + \p{n}\y{k-1}{n-2}\p{n-1} + \y{k-1}{n-2}\p{n-1}\p{n} - \p{n}\y{k-1}{n-2}\p{n-1} \\[2pt]
  &\qquad -\y{k-1}{n-2}\p{n-1}\p{n}  + \p{n-1}\p{n}\y{k-1}{n-2}\p{n-1}\\
&= 0\\   
\shortintertext{For $j=n$ the calculation is again simply:}
 \p{n}\y{k}{n} -\y{k}{n}\p{n} &=(\p{n}\y{k}{n-1}- \y{k}{n-1}\p{n}) + (-\p{n}\y{k}{n-1}+\y{k}{n-1}\p{n})+ (\y{k-1}{n-1}\p{n} - \y{k-1}{n-1}\p{n}) =0
 \end{align*}
Now we prove by induction that $\y{j}{n}\y{k}{n}= \y{k}{n}$ holds for all \mbox{$1\leq j\leq k\leq n$}. The case for $n=1$ is trivial, so we proceed with $n>1$. For this we set $\y{0}{n-1}\ldef 1\in \A{n}$ and rewrite:
\[
\y{j}{n}=\y{j}{n}-\p{n}\y{j}{n-1} + \y{j-1}{n-1}\p{n}=(1-\p{n})\y{j}{n} + \y{j-1}{n-1}\p{n}\in \A{n}
\]
By the generalized relations we get the following equalities in $\A{n}$; the last one by the induction hypothesis:
\[
 \begin{aligned}
\y{j}{n}\y{k}{n}
&= (1-\p{n})\y{j}{n-1} (1-\p{n})\y{k}{n-1} + (1-\p{n})\y{j}{n-1} \y{k-1}{n-1}\p{n} \\[2pt]
&\qquad \:+ \y{j-1}{n-1}\p{n}(1-\p{n})\y{k}{n-1} + \y{j-1}{n-1}\p{n}\y{k-1}{n-1}\p{n}\\[3pt]
&= (1-\p{n})\y{j}{n-1} (1-\p{n})\y{k}{n-1} + \y{j-1}{n-1}\y{k-1}{n-1}\p{n}\\[2pt]    
&= (1-\p{n})\y{k}{n-1} + \y{k-1}{n-1}\p{n}\\[2pt]
 \end{aligned}
\]
For the last statement it is convenient to show by induction on $n$ (simultanously) that for all $\emptyset\neq J\subseteq \ito{n}$ the following two equations hold:
\[
\y{|J|}{n}\p{J}  = \p{J} \qquad\text{and }\qquad \y{|J|}{n}\p{n+1}\p{J}  = \p{n+1}\p{J}  
\] 
\end{proof}
\begin{proof}[Proof of Theorem \ref{Dynkin complete pw orthogonal idempotents}.]
By similar computations and case by case analysis we deduce the \linebreak Theorem with the following steps: first show by induction on $n$, that for each $k\in \ito{n}$ the set $\set{\g{J}{n}}{J\subseteq \ito{n},|J|=k}$ is a set of pairwise orthogonal idempotents, whose elements add up to $\y{k}{n}$. Therefore -- by definition of $\f{J}{n}$ and the properties of $\z{k}{n}$ -- the set $\set{\f{J}{n}}{J\subseteq \ito{n},|J|=k}$ consists of pairwise orthogonal idempotents. From $\y{k}{n}\z{k}{n}= \z{k}{n}$ we get $\sum_{J\subseteq \ito{n},|J|=k}\f{J}{n}=\z{k}{n}$. Since $\set{\z{k}{n}}{0\leq k \leq n}$ consists of central pairwise orthogonal elements, which add up to $1\in \A{n}$, Theorem \ref{Dynkin complete pw orthogonal idempotents} follows.  
\end{proof}
We finish this subsection with a useful remark on the following chain of ideals in $\A{n}$:
 \[
\A{n}= \Jk{0}{n} \supset \Jk{1}{n} \supset \Jk{2}{n} \supset \ldots \supset \Jk{n-1}{n} \supset \Jk{n}{n} = \langle \p{\ito{n}}\rangle_R \supset 0   
 \]
where $\Jk{k}{n}$ is the ideal generated by the monomials $\p{J}$ with $J\subseteq \ito{n}$ and $|J|=k$. It is directly deduced from the last equation of Lemma \ref{Idempotente Lemma Eigenschaften der y}, that $\Jk{k}{n}$ is contained in the ideal $\y{k}{n}\A{n}$. On the other hand an easy induction shows that $\y{k}{n}$ is contained in $\Jk{k}{n}$. Thus the equalities hold: 
\begin{cor}\label{Dynkin Cor Ideal generators}
\[
 \Jk{k}{n} = \y{k}{n}\A{n} = \set{a\in \A{n}}{\y{k}{n}a= a}
\]
In particular $\z{k}{n}=\y{k}{n}-\y{k+1}{n}$ annihilates the ideals $\Jk{k+1}{n} \supset \ldots \supset \Jk{n}{n}$.
\end{cor}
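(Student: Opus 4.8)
The plan is to show that the three objects $\Jk{k}{n}$, $\y{k}{n}\A{n}$ and $\set{a\in\A{n}}{\y{k}{n}a=a}$ coincide and then to read off the annihilation statement. Throughout I would use from Lemma \ref{Idempotente Lemma Eigenschaften der y} that $\y{k}{n}$ is central and idempotent (the latter being the case $i=j=k$ of $\y{i}{n}\y{j}{n}=\y{j}{n}$) and that $\y{k}{n}\p{J}=\p{J}$ whenever $|J|=k$. The equality $\y{k}{n}\A{n}=\set{a\in\A{n}}{\y{k}{n}a=a}$ is then purely formal: if $\y{k}{n}a=a$ then $a=\y{k}{n}a\in\y{k}{n}\A{n}$, and conversely if $a=\y{k}{n}b$ then $\y{k}{n}a=\y{k}{n}\y{k}{n}b=\y{k}{n}b=a$ by idempotency. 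So the real content is $\Jk{k}{n}=\y{k}{n}\A{n}$.

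The inclusion $\Jk{k}{n}\subseteq\y{k}{n}\A{n}$ is the easy half. Since $\y{k}{n}$ is central, $\y{k}{n}\A{n}$ is a two-sided ideal, and each generator $\p{J}$ of $\Jk{k}{n}$ (with $|J|=k$) satisfies $\p{J}=\y{k}{n}\p{J}\in\y{k}{n}\A{n}$ by the last equation of Lemma \ref{Idempotente Lemma Eigenschaften der y}; hence the ideal they generate is contained in $\y{k}{n}\A{n}$.

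The reverse inclusion reduces, since $\Jk{k}{n}$ is an ideal, to proving $\y{k}{n}\in\Jk{k}{n}$, and this is the main obstacle. The naive hope that every monomial of support size at least $k$ lies in $\Jk{k}{n}$ is false (already $\p{2\konk 1}\notin\Jk{2}{2}$), so one must exploit the finer structure of $\y{k}{n}$. I would use the decomposition $\y{k}{n}=\sum_{J\subseteq\ito{n},\,|J|=k}\g{J}{n}$ from the proof of Theorem \ref{Dynkin complete pw orthogonal idempotents}, together with the observation that each idempotent $\g{J}{n}$ is a right multiple of $\p{J}$. Concretely I would prove $\g{J}{n}\p{J}=\g{J}{n}$ by induction on $n$ along the recursion defining $\g{J}{n}$: the case $n\notin J$ is immediate from $\g{J}{n}=(1-\p{n})\g{J}{n-1}$ and the inductive identity $\g{J}{n-1}\p{J}=\g{J}{n-1}$, while the case $n\in J\neq\{n\}$ uses $\p{J}=\p{J\ohne\{n\}}\,\p{n}$ together with $\p{n}\p{J}=\p{J}$ (valid because $n$ is a sink of $Q_n$, via Lemma \ref{Normalform-lem-generalized relations}). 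Given this, $\g{J}{n}=\g{J}{n}\p{J}\in\A{n}\p{J}\subseteq\Jk{k}{n}$ for $|J|=k$, whence $\y{k}{n}\in\Jk{k}{n}$. Equivalently one can run the induction directly on the recursion for $\y{k}{n}$; the only delicate summand is $\y{k-1}{n-1}\p{n}$, which, writing $\g{J}{n-1}=\g{J}{n-1}\p{J}$, becomes $\sum_{|J|=k-1}\g{J}{n-1}\p{J}\p{n}=\sum_{|J|=k-1}\g{J}{n-1}\p{J\cup\{n\}}\in\Jk{k}{n}$ since $\p{J}\p{n}=\p{J\cup\{n\}}$ is a size-$k$ generator.

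Finally, for the annihilation statement I would argue as follows. By the descending chain of ideals it suffices to treat $\Jk{k+1}{n}$, so let $a\in\Jk{k+1}{n}=\y{k+1}{n}\A{n}$, i.e.\ $\y{k+1}{n}a=a$. Then, using $\y{k}{n}\y{k+1}{n}=\y{k+1}{n}$ from Lemma \ref{Idempotente Lemma Eigenschaften der y}, one computes $\z{k}{n}a=(\y{k}{n}-\y{k+1}{n})a=\y{k}{n}\y{k+1}{n}a-\y{k+1}{n}a=\y{k+1}{n}a-\y{k+1}{n}a=0$, and centrality of $\z{k}{n}$ upgrades this to two-sided annihilation. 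I expect the induction establishing $\g{J}{n}\p{J}=\g{J}{n}$ (equivalently the containment $\y{k}{n}\in\Jk{k}{n}$) to be the only step requiring genuine care, the remaining assertions being formal once idempotency, centrality and the relation $\y{i}{n}\y{j}{n}=\y{j}{n}$ are in hand.
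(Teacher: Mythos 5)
Your proposal is correct and follows the same route as the paper, which merely states that $\Jk{k}{n}\subseteq \y{k}{n}\A{n}$ follows from $\y{k}{n}\p{J}=\p{J}$ and that ``an easy induction'' gives $\y{k}{n}\in\Jk{k}{n}$; your induction via $\g{J}{n}=\g{J}{n}\p{J}$ (handling the summand $\y{k-1}{n-1}\p{n}=\sum_{|J|=k-1}\g{J}{n-1}\p{J\cup\{n\}}$) is a valid instantiation of exactly that step, and the formal identifications with the fixed-point set and the annihilation by $\z{k}{n}$ are as in the paper.
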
 
\subsection{Proof of Theorem \ref{Dynkin theorem aJI neq 0}}
\subsubsection{Chain description of $\mathbf{\p{K,J}}$}\label{chain description}
For $n\in \NN$ and subsets $J$ and $K$ of $\ito{n}$ we write $K\vgrn{n} J$ if $K$ is a minimal proper successor of $J$ w.r.t. $\grn{n}$ and say that their are neighbours. In general there is more than one $\vgrn{n}$-chain between $K\grn{n} J$. But the monomial $\p{(K,J)}$ is an invariant of such $\vgrn{n}$-chains:
\begin{lem} \label{Idempotente Lemma alternative Beschreibung der mnJK}
Let $K\grn{n} J$. For every $\vgrn{n}$-chain \mbox{$K=H_t \vgrn{n} \ldots \vgrn{n} H_1 = J$} we have:
\[
\p{(K,J)} = \p{{H_t}\konk\ldots\konk{H_1}} 
\]
\end{lem}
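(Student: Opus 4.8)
The plan is to reduce the claimed equality of the two monomials $\p{(K,J)}$ and $\p{{H_t}\konk\ldots\konk{H_1}}$ to a comparison of their action on the injective indecomposable $Q_n$-representations $I_0,I_1,\ldots,I_n$. Both are single elements of $\monoid{Q_n}$, and by the faithfulness of the action of $\monoid{Q_n}$ on $\{I_0,\ldots,I_n\}$ established in the proof of condition (3) of Proposition \ref{Special cases  prop normal form of B_n lin.oriented} (distinct normal-form words already act differently on some $I_{\max J_a}$), two elements of $\monoid{Q_n}$ inducing the same self-map of $\{I_0,\ldots,I_n\}$ must coincide. Hence it suffices to check that both monomials induce the same self-map.

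First I would record how a single descending block acts. Using $\p{t}I_t=I_{t-1}$ and $\p{t}I_x=I_x$ for $x\neq t$ (subsection \ref{subsection tools for (3)}), a short induction on $k-j$ shows $\p{k\ru j}I_\ell=I_{\ell-1}$ whenever $j\leq\ell\leq k$ and $\p{k\ru j}I_\ell=I_\ell$ otherwise; that is, the block $k\ru j$ acts as the unit downward shift on the interval $[j,k]$. Consequently $\p{(K,J)}=\p{k_1\ru j_1}\circ\cdots\circ\p{k_r\ru j_r}$ acts as the composite of the unit downward shifts attached to the intervals $[j_1,k_1],\ldots,[j_r,k_r]$, a transparent self-map of $\{I_0,\ldots,I_n\}$ (note $k_i\geq j_i$ because $K\grn{n}J$).

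The equality is then proved by induction on the length $t$ of the chain, the inductive hypothesis being that every saturated chain of length $t-1$ from $J$ to some $H'$ has product $\p{(H',J)}$. For $t=1$ we have $K=J$ and the claim is $\p{J}=\p{(J,J)}$, which holds by definition. For the inductive step I would peel off the top step: since concatenation of words corresponds to the product in $\monoid{Q_n}$, $\p{{H_t}\konk\ldots\konk{H_1}}=\p{K}\cdot\p{{H_{t-1}}\konk\ldots\konk{H_1}}$, and as $H_{t-1}\vgrn{n}\ldots\vgrn{n}H_1$ is a saturated chain from $J$ to $H_{t-1}$, the induction hypothesis rewrites this as $\p{K}\cdot\p{(H_{t-1},J)}$. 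Everything therefore reduces to the following covering lemma, which simultaneously yields the asserted independence of the chain: if $K$ covers $H$ and $H\grn{n}J$, then $\p{K}\cdot\p{(H,J)}=\p{(K,J)}$. Writing $K$ as $H$ with its $\ell$-th coordinate raised by one, the monomials $\p{(H,J)}$ and $\p{(K,J)}$ differ only in their $\ell$-th block, whose top is raised from $h_\ell$ to $h_\ell+1$; on the injectives this means $\p{(K,J)}$ additionally sends $I_{h_\ell+1}$ one step down, and I would check that post-composing $\p{(H,J)}$ with the idempotent $\p{K}$ supplies exactly this extra shift while fixing every other coordinate.

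The main obstacle is precisely this covering lemma when the raised coordinate is interior ($\ell<r$): one must verify that the letters of $\p{K}$ which already occur as tops of higher blocks are absorbed by idempotency and commutativity, while the single new letter $k_\ell=h_\ell+1$ is transported down to the top of the $\ell$-th block without disturbing the images produced by the neighbouring intervals. This uses the coordinatewise inequalities $k_i\geq j_i$ (so that the intervals nest correctly) together with the linear, acyclic orientation of $Q_n$. Equivalently, the covering lemma can be carried out entirely inside the monoid via the generalized relations of Lemma \ref{Normalform-lem-generalized relations}: repeatedly applying $X_tX_wX_t=X_wX_t$ for a sink $t$ and the commutativity relation moves the new letter into its block and reduces $\p{K}\,\p{(H,J)}$ to $\p{(K,J)}$, the duplicated top letters collapsing by $X_t^2=X_t$.
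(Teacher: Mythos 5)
Your proof is correct, but it runs along a genuinely different induction than the paper's. The paper inducts on $n$: it strips the top vertex $n+1$ from every set in the chain, distinguishing the cases $n+1\in J$ and $n+1\notin J$, and in the latter case splits the chain at the first member containing $n+1$ before reassembling with the generalized relations. You instead induct on the chain length $t$ and reduce everything to the one-step covering identity $\p{K}\,\p{(H,J)}=\p{(K,J)}$ for $K\vgrn{n}H\grn{n}J$; this has the advantage of making the chain-independence visible at each single step, and your appeal to the faithful action on $I_0,\ldots,I_n$ is legitimate within the paper's framework (it is exactly how Proposition \ref{Special cases  prop normal form of B_n lin.oriented} is reused in the proof of Lemma \ref{Dynkin lemma pKJ not in Jk k+1}). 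Your covering lemma is true, and the ``entirely inside the monoid'' route you sketch closes cleanly: writing $K=\{k_1<\ldots<k_r\}$ with $k_i=h_i$ for $i\neq\ell$ and $k_\ell=h_\ell+1$, and processing the letters of $\p{K}=\p{k_1\konk\ldots\konk k_r}$ from the right, each $k_i=h_i$ with $i\neq\ell$ is absorbed by the first letter of the $i$-th block of $\p{(H,J)}$ via $X_{h_i}X_wX_{h_i}=X_wX_{h_i}$ (Lemma \ref{Normalform-lem-generalized relations}), since every letter of the intervening blocks lies in $[j_m,h_m]$ with $h_m<h_i$, so $h_i$ is a sink in the relevant subquiver; and the single new letter $k_\ell=h_\ell+1$ commutes with all letters of blocks $1,\ldots,\ell-1$ (they are at most $h_{\ell-1}\leq h_\ell-1=k_\ell-2$) and prepends to the $\ell$-th block, turning $h_\ell\ru j_\ell$ into $k_\ell\ru j_\ell$. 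You leave this verification at the level of a sketch, but so does the paper with its own reassembly steps, and the argument does go through; I would only ask you to record explicitly the two structural facts just stated, since they are where the hypothesis $K\grn{n}J$ and the linear orientation of $Q_n$ actually enter.
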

\begin{proof}
We just show the induction step $ 2\leq n \rightarrow n+1$ for non-empty, proper subsets $n+1 \in K\grn{n+1} J$ of $\ito{n+1}$. Let $1<r\ldef |J| \neq J$ and consider a $\vgrn{n+1}$-chain $K=H_t\vgrn{n+1} \ldots \vgrn{n+1} H_1 = J$. 
Let $ h_{r,s}>h_{r-1,s}>\ldots> h_{1,s}$ be the elements of $H_s$ for $s\in \ito{t}$. Since $H_{s+1}\vgrn{n+1} H_{s}$ there is exactly one index $k\in \ito{r}$ with $h_{k,s+1} > h_{k,s}=h_{k,s+1} -1$ and $h_{j,s}=h_{j,s+1}$ for all $j\neq k$. 

We denote by $\widetilde{J}$, $\widetilde{K}$ and $\widetilde{H_s}$ the sets $J,K$ and $H_s$ without their maximal elements respectively.
\newline\case{First case: ${n+1 \in J}$}
In particular $k_r \ru j_r = n+1$ holds. Moreover the maximal elements $h_{r,s} = n+1$ for $s\in \ito{t}$ are not involved in the $\vgrn{n+1}$-chain, that is we already have a $\vgrn{n}$-chain:
\begin{align*}
&\hphantom{\p{{H_t}\konk\ldots\konk{H_1}}}\widetilde{K} = K\ohne \{n+1\}= \widetilde{H_t}\vgrn{n}  \ldots \vgrn{n} \widetilde{H_1} = J\ohne \{n+1\}=\widetilde{J}\\
\shortintertext{So by the generalized relations and the induction hypothesis we deduce:}
\p{{H_t}\konk\ldots\konk{H_1}} &=\p{\widetilde{H_t}\konk n+1\konk\ldots\konk  {\widetilde{H_2}}\konk n+1\konk{\widetilde{H_1}}\konk n+1}= \p{{\widetilde{H_t}}\konk\ldots \konk  {\widetilde{H_2}}\konk{\widetilde{H_1}}\konk n+1}
=\p{k_1 \ru j_1 \konk \ldots \konk k_{r-1}\ru j_{r-1}\konk k_r\ru j_r} = \p{(K,J)}
\end{align*}
\case{Second case: ${n+1 \notin J}$}
We divide the chain into two chains, such that one of them contains no $n+1$ but the other does. More precisely, there is an index $s\in \{2,\ldots,t\}$ minimal with $n+1\in H_s$. Then we have: $h_{r,s-1}=n <n+1 = h_{r,s} = \ldots = h_{r,t} = k_r$. 
As in the first case we receive:
\[\p{H_t\ldots H_s}= \p{(\widetilde{H_s},\widetilde{H_t})}\p{n+1} \quad\text{and}\quad \p{H_{s-1}\ldots H_1}=\p{(\widetilde{H_{s-1}},\widetilde{H_{1}})}\p{ h_{r,s-1}\ru h_{r,1}}
\]
Now $\p{n+1}$ commutes with all $\p{j}$ for  $j\leq h_{r-1,s-1}$, hence with $\p{(\widetilde{H_{s-1}},\widetilde{H_1})}$. Moreover we observe $\widetilde{H}_{s-1} = \widetilde{H}_s$. Therefore we get by applying the induction hypothesis several times:
\[
 \begin{aligned}
\p{H_t\ldots H_1} = \p{(\widetilde{H_t},\widetilde{H_s})}\p{(\widetilde{H_{s-1}},\widetilde{H_{1}})}\p{n+1\konk h_{r,s-1}\ru h_{r,1}}=\p{(\widetilde{H_t},\widetilde{H_1})}\p{n+1\ru j_r}=\p{(K,J)}\\
 \end{aligned}
\]
\end{proof}
\begin{cor}\label{Idempotente Korollar pJK in den Idealen und invariant unter PJ und PK}
For all $L\grn{n}K\grn{n}J$ the monomial $\p{(K,J)}$ is contained in the ideal $\Jk{|J|}{n}$ and we have $\p{(L,K)}\p{(K,J)}=\p{(L,J)}$, in particular \mbox{$\p{K}\p{(K,J)} = \p{(K,J)} = \p{(K,J)}\p{J}$} holds.
\end{cor}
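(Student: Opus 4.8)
The plan is to derive all three assertions from the chain description of Lemma~\ref{Idempotente Lemma alternative Beschreibung der mnJK}, the only additional ingredient being that $\p{K}$ is idempotent for an \emph{arbitrary} subset $K\subseteq\ito{n}$, not merely for an interval. I would first record this idempotency. Write $K$ as the disjoint union of its maximal intervals $I_1<I_2<\ldots<I_m$; since consecutive intervals are separated by a gap, there is no arrow between $Q_{I_a}$ and $Q_{I_b}$ for $a\neq b$ in $Q_n$, so $\p{I_a}$ and $\p{I_b}$ commute by the third generalized identity of Lemma~\ref{Normalform-lem-generalized relations}. Listing the elements of $K$ increasingly gives $\p{K}=\p{I_1}\p{I_2}\cdots\p{I_m}$, a product of pairwise commuting idempotents (each $\p{I_c}$ being idempotent as noted before Proposition~\ref{Special cases  prop normal form of B_n lin.oriented}), hence $\p{K}$ is itself idempotent; in particular $\p{K\konk K}=\p{K}$.

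The central identity is $\p{(L,K)}\p{(K,J)}=\p{(L,J)}$. I would fix a $\vgrn{n}$-chain $L=G_u\vgrn{n}\ldots\vgrn{n}G_1=K$ and a $\vgrn{n}$-chain $K=H_t\vgrn{n}\ldots\vgrn{n}H_1=J$. Identifying their common end $G_1=H_t=K$ produces the sequence $L=G_u\vgrn{n}\ldots\vgrn{n}G_2\vgrn{n}K\vgrn{n}H_{t-1}\vgrn{n}\ldots\vgrn{n}H_1=J$, which is again a $\vgrn{n}$-chain from $L$ to $J$ because $G_2\vgrn{n}K$ and $K\vgrn{n}H_{t-1}$ hold by construction. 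Applying Lemma~\ref{Idempotente Lemma alternative Beschreibung der mnJK} to the two given chains and collapsing the repeated junction letter via $\p{K\konk K}=\p{K}$ gives
\[
\p{(L,K)}\p{(K,J)}=\p{G_u\konk\ldots\konk G_2\konk K}\cdot\p{K\konk H_{t-1}\konk\ldots\konk H_1}=\p{G_u\konk\ldots\konk G_2\konk K\konk H_{t-1}\konk\ldots\konk H_1}=\p{(L,J)},
\]
where the final equality is Lemma~\ref{Idempotente Lemma alternative Beschreibung der mnJK} applied to the concatenated chain.

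The two special identities follow by specialising the three subsets. Taking $L=K$ and using $\p{(K,K)}=\p{K}$ yields $\p{K}\p{(K,J)}=\p{(K,J)}$, while applying the identity to the triple $K\grn{n}J\grn{n}J$ (the product $\p{(K,J)}\p{(J,J)}$) yields $\p{(K,J)}\p{J}=\p{(K,J)}$. Containment in the ideal is then immediate: since $\p{J}$ is one of the generators $\set{\p{J'}}{J'\subseteq\ito{n},\ |J'|=|J|}$ of $\Jk{|J|}{n}$, the equation $\p{(K,J)}=\p{(K,J)}\p{J}$ exhibits $\p{(K,J)}$ as an element of the two-sided ideal $\Jk{|J|}{n}$ (equivalently, $\y{|J|}{n}\p{(K,J)}=\p{(K,J)}$ using centrality of $\y{|J|}{n}$, $\y{|K|}{n}\p{K}=\p{K}$, and Corollary~\ref{Dynkin Cor Ideal generators}).

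The one genuine subtlety is the treatment of the junction: forming the product $\p{(L,K)}\p{(K,J)}$ naturally writes the set $K$ twice, and it is precisely the idempotency $\p{K\konk K}=\p{K}$ for a general subset $K$ that reduces this word to the one indexing the concatenated $\vgrn{n}$-chain. Note that this idempotency cannot be bootstrapped from the central identity itself (which would be circular, as its proof uses the junction step), so it must be established independently as in the first paragraph; everything else is bookkeeping built on Lemma~\ref{Idempotente Lemma alternative Beschreibung der mnJK}.
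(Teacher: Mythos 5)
Your proposal is correct and follows exactly the route the paper intends: the corollary is stated as an immediate consequence of the chain description in Lemma \ref{Idempotente Lemma alternative Beschreibung der mnJK}, obtained by concatenating a $\vgrn{n}$-chain from $L$ to $K$ with one from $K$ to $J$ and collapsing the repeated junction via the idempotency of $\p{K}$ (which the paper has already established for arbitrary subsets, via the interval decomposition you reprove). Your explicit verification of that idempotency and of the ideal membership via $\p{(K,J)}=\p{(K,J)}\p{J}$ fills in precisely the bookkeeping the paper leaves implicit.
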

In fact, the stronger assertion holds:
\begin{lem}\label{Dynkin lemma pKJ not in Jk k+1}
\[   
\p{(K,J)} \in \Jk{|J|}{n}\ohne \Jk{|J|+1}{n}
\]
\end{lem}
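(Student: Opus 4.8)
The containment $\p{(K,J)}\in\Jk{|J|}{n}$ is already supplied by Corollary \ref{Idempotente Korollar pJK in den Idealen und invariant unter PJ und PK}, so the whole content of the statement is the non-membership $\p{(K,J)}\notin\Jk{|J|+1}{n}$. My plan is to separate the normal form basis elements of $\A{n}$ by the \emph{rank} of the transformation they induce on the injective indecomposables. Concretely, for $w\in\free{Q_n}$ let $f_w\colon\oto{n}\to\oto{n}$ be the function determined by $\p{w}I_m\cong I_{f_w(m)}$ (recall $I_0=0$, and that on $Q_n$ each $\p{w}I_m$ is again a \emph{single} indecomposable injective by the remarks of subsection \ref{subsection tools for (3)}); put $\rho(w)\ldef|\im f_w|$. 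Since $\p{w}$ is defined only up to natural isomorphism, $f_w$ and hence $\rho(w)$ depend only on the monoid element $\p{w}$, so $\rho$ is well defined on the basis $\set{\p{w}}{w\in W_n}$ of Proposition \ref{Special cases  prop normal form of B_n lin.oriented}; in particular passing from an arbitrary monomial to its normal form leaves $\rho$ unchanged. Because composing functions never enlarges the image, $\rho(v\konk w)\le\min\{\rho(v),\rho(w)\}$.

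First I would compute $\rho$ on the ideal generators. For $J'=\{a_1<\ldots<a_k\}$ one checks directly that $f_{J'}(m)=\max\bigl(\{0,\ldots,m\}\setminus J'\bigr)$, whose image is exactly $\oto{n}\setminus J'$; hence $\rho(\p{J'})=(n+1)-k$. As an $R$-module $\Jk{k}{n}$ is spanned by the monomials $\p{u}\,\p{J'}\,\p{v}$ with $|J'|=k$, and each such monomial has rank at most $\rho(\p{J'})=(n+1)-k$ by subadditivity. Rewriting a monomial in the normal form basis preserves its induced function, so $\Jk{k}{n}$ lies in the span of those basis elements $\p{w}$, $w\in W_n$, with $\rho(\p{w})\le(n+1)-k$. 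Taking $k=|J|+1$ shows that $\Jk{|J|+1}{n}$ is contained in the span of the normal form basis elements of rank at most $n-|J|$.

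It then remains to prove $\rho(\p{(K,J)})\ge(n+1)-|J|$, which already exceeds $n-|J|$. Writing $r\ldef|J|=|K|$ and viewing $\p{(K,J)}$ as the composite of the $r$ blocks $\p{k_i\ru j_i}$, I would observe that each block acts on $\oto{n}$ as the down-shift $m\mapsto m-1$ on the interval $\{j_i,\ldots,k_i\}$ and as the identity elsewhere (including the boundary case $j_i=1$, where $I_{j_i-1}=I_0=0$). Such a map identifies at most one pair of indices, namely $j_i-1$ and $j_i$, and therefore shrinks the cardinality of any subset of $\oto{n}$ by at most one. Composing the $r$ blocks thus drops the image size from $n+1$ by at most $r$, giving $\rho(\p{(K,J)})\ge(n+1)-r$. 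Since $\p{(K,J)}$ equals a single normal form basis element of this rank, which is strictly larger than $n-|J|$, the linear independence of the basis forces $\p{(K,J)}\notin\Jk{|J|+1}{n}$; together with Corollary \ref{Idempotente Korollar pJK in den Idealen und invariant unter PJ und PK} this yields the claim. (As a sanity check, the upper bound from the second paragraph then pins the rank down to exactly $(n+1)-r$.)

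The main obstacle is the rank lower bound $\rho(\p{(K,J)})\ge(n+1)-r$ of the third paragraph; everything else is bookkeeping once the invariant $\rho$ is in place. The genuinely delicate points are not the inequality itself — which follows cleanly from the ``each block merges at most one pair'' observation — but rather verifying that $\rho$ is truly an invariant of the monoid element (so the reduction to normal form is legitimate) and that the down-shift description of the blocks $\p{k_i\ru j_i}$ is correct in all boundary cases. I note in passing that the non-membership could equivalently be tested through the central idempotents of Corollary \ref{Dynkin Cor Ideal generators}, since $\p{(K,J)}\in\Jk{|J|+1}{n}$ is the same as $\z{|J|}{n}\p{(K,J)}=0$; but the rank invariant above seems the more transparent route.
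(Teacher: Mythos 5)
Your argument is correct, and it takes a genuinely different route from the paper. The paper first reduces to the extremal monomials $\m{r}{n}=\p{(\{n-r+1,\ldots,n\},\ito{r})}$, observing that every $\p{(K,J)}$ is a factor of $\m{|J|}{n}$ by the chain description (so membership of $\p{(K,J)}$ in the ideal $\Jk{|J|+1}{n}$ would force $\m{|J|}{n}\in\Jk{|J|+1}{n}$), and then proves $\m{r}{n}\notin\Jk{r+1}{n}$ by induction on $n$, using the characterization $\Jk{k}{n}=\set{a\in\A{n}}{\y{k}{n}a=a}$ of Corollary \ref{Dynkin Cor Ideal generators} to rewrite the claim as $(1-\y{r+1}{n})\m{r}{n}\neq 0$ and an explicit recursion $(1-\y{r+1}{n+1})\m{r}{n+1}=(1-\y{r}{n})\m{r-1}{n}\p{n+1\ru r}$; the action on injectives enters only at the end, to see that right multiplication by $\p{n+1\ru r}$ preserves nonvanishing of a linear combination of distinct monomials. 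You instead set up a global rank filtration: the invariant $\rho(w)=|\im f_w|$ on the function $f_w$ induced on $\{I_0,\dots,I_n\}$ is well defined on monoid elements, is subadditive under composition, bounds $\Jk{k}{n}$ inside the span of normal-form monomials of rank at most $(n+1)-k$ (since $\rho(\p{J'})=(n+1)-|J'|$), and satisfies $\rho(\p{(K,J)})\geq (n+1)-|J|$ because each block $\p{k_i\ru j_i}$ acts as a down-shift on $\{j_i,\dots,k_i\}$ and merges exactly the one pair $\{j_i-1,j_i\}$. All of these steps check out, including the boundary case $j_i=1$ via $I_0=0$, and linear independence of the normal-form basis then finishes the argument; the containment $\p{(K,J)}\in\Jk{|J|}{n}$ is correctly delegated to Corollary \ref{Idempotente Korollar pJK in den Idealen und invariant unter PJ und PK}. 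Your approach avoids both the reduction to the extremal case and the induction with the central elements $\y{k}{n}$, and it makes transparent why the filtration $\Jk{0}{n}\supset\Jk{1}{n}\supset\ldots$ is precisely the rank filtration of $\mathrm{NDPF}_{n+1}$ viewed as a transformation monoid; the paper's argument, on the other hand, stays entirely inside the algebra $\A{n}$ and produces along the way the identity $\Jk{k}{n}=\y{k}{n}\A{n}$, which is reused elsewhere in section 5.
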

\begin{proof}
The monomials $\p{(K,J)}$ for maximal $K$ and minimal $J$ (w.r.t. $\kln{n}$) are easier to handle, i.e.\ $K=\{n-r+1, \ldots, n\}$ and $J=\ito{r}$ for some $r\in \oto{n}$. We set $\m{0}{n}\ldef 1\in \A{n}$ and for each $r \in \ito{n}$
\[
 \m{r}{n}\ldef \p{(\{n-r+1, \ldots, n\},\ito{r})} =\p{n-r+1\ru 1\konk \ldots \konk n-1\ru r-1\konk n\ru r} =\m{r-1}{n-1}\p{n\ru r} 
\]
Now we prove $\m{r}{n} \notin \Jk{r+1}{n}$ by induction on $n$, which is by Corollary \ref{Dynkin Cor Ideal generators} equivalent to: 
\[
  (1-\y{r+1}{n}) \m{r}{n}\neq 0
\]
In the induction step $n\rightarrow n+1$ the extreme cases $r\in \{0,n+1\}$ are trivial and for the remaining $r$ we get by direct calculations using the generalized relations:
\[
(1-\y{2}{n+1})\m{1}{n+1}= \z{0}{n}\p{n+1\ru 1} \quad\text{and}\quad (1-\y{r+1}{n+1})\m{r}{n+1} =  (1-\y{r}{n}) \m{r-1}{n}\p{n+1\ru r}
\]
The induction hypothesis applies to the ladder cases. Thus the linear combinations $\z{0}{n}$ and $(1-\y{r}{n}) \m{r-1}{n}$ of monomials in $\A{n}$ are distinct from $0$. But for any two distinct monomials $\p{v}$ and $\p{w}$ in $\A{n}$ there exists an injective indecomposable $Q_n$-representation $I_y$ with $\p{v}I_y\neq \p{w}I_y$ by the proof of Proposition \ref{Special cases  prop normal form of B_n lin.oriented}.
With Lemma \ref{Special_cases lem embedding functor} we conclude that thus $\p{v}\p{n+1\ru r}$ and $\p{w}\p{n+1\ru r}$ act differently on one of the $Q_{n+1}$-representations $I_y$ or $I_{y+1}$. Therefore the monomials $ \m{r}{n+1}$ do not lie in $\Jk{r+1}{n+1}$. Since each monomial $\p{(K,J)}$ is a factor of $\m{|J|}{n}$ by the chain description, $\p{(K,J)}$  does not lie in $\Jk{|J|+1}{n}$. 
\end{proof}

\subsubsection{Inductive description of $\mathbf{\p{(K,J)}}$}\label{inductive description}
We denote by $K_{\max}$ the greatest interval (w.r.t. $\groint$, see page \pageref{specialcases_groint})  of a finite subset $K$ of $\NN$, e.g.\ $\{9,8,5,4,2\}_{\max}=\{9,8\}$ and $\{4,3,2\}_{\max}=\{4,3,2\}$ and $\{5,3,2,1\}_{\max}=\{5\}$. As usual we set $K-1\ldef \set{k-1}{k\in K}$ and $\emptyset - 1\ldef \emptyset$.  
If $K\grn{n+1} J$ and $n+1$ lies in $J$, then $n+1$ also lies in $K$, moreover $J_{\max}$ is a subset of $K_{\max}$. Therefore the distinction of cases in the next remark is complete.
\begin{rem}
Let $K\grn{n+1} J$ and $K\neq J$. Then we have:
\[
 \begin{aligned}
&K\grn{n} J \quad &&\text{if } n+1\notin K\\[2pt]
&K\ohne K_{\max}\cup (K_{\max}-1)\grn{n} J \quad &&\text{if }n+1\in K\ohne J\\[2pt]
& K\ohne K_{\max}\cup ((K_{\max}\ohne J_{\max})-1)\grn{n} J\ohne J_{\max} \quad &&\text{if }n+1\in  J  
 \end{aligned}
\]
\end{rem}
Thus we can define:
\begin{df} \label{Idempotente zu den Wegen korrespond. Elemente in An}
Starting with $\m{\{1\},\{1\}}{1} \ldef \p{1}$ and $\m{\emptyset,\emptyset}{1} = 1\in \A{1}$ we define inductively on $n$ for every pair $K\grn{n+1} J$ the monomial $\m{K,J}{n+1}$ by:
\[
\m{K,J}{n+1}\ldef \begin{cases}
\p{J}  \quad &\text{if } J=K\\
\m{K,J}{n}\quad 			&\text{if } J\neq K , n+1\notin K\\[3pt]
\p{K_{\max}}\:\m{K\ohne K_{\max}\cup (K_{\max}-1)\;,\;J}{n}\quad &\text{if } J\neq K ,  n+1\in K\ohne J \\[3pt]	 
\p{K_{\max}}\:\m{K\ohne K_{\max}\cup ((K_{\max}\ohne J_{\max})-1)\;,\;J\ohne J_{\max}}{n} \quad &\text{if } J\neq K, n+1\in J 
\end{cases}
 \] 
\end{df}
An induction on $n$ and a case by case analysis according to the definition of $\m{K,J}{n}$ shows in a straightforward way:
\begin{lem} \label{Idempotente Lemma induktive Beschreibung der mnJK}
Let $K\grn{n} J$. Then we have:
\[
 \p{(K,J)} = \m{K,J}{n}
\]
\qed\end{lem}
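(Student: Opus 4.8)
The plan is to induct on $n$. For $n=1$ the only pairs are $K=J=\emptyset$ and $K=J=\{1\}$, where both sides equal $1$ and $\p{1}$, matching the initial values in Definition~\ref{Idempotente zu den Wegen korrespond. Elemente in An}. For the step $n\to n+1$ I fix $K\grn{n+1}J$ and follow the four cases in the definition of $\m{K,J}{n+1}$, using in each the description of $K\grn{n+1}J$ in terms of subsets of $\ito{n}$ given in the remark preceding that definition. If $J=K$ then $\p{(K,J)}=\p{(K,K)}=\p{K}$ by the identification $\p{(J,J)}=\p{J}$ noted just after the definition of $\p{(K,J)}$, and this is exactly $\m{K,K}{n+1}$. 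If $J\neq K$ and $n+1\notin K$ then $\max K\leq n$ forces $K,J\subseteq\ito{n}$, so the word $k_1\ru j_1\konk\ldots\konk k_r\ru j_r$ only involves generators $\p{m}$ with $m\leq n$; hence $\p{(K,J)}$ is the image of the corresponding element of $\A{n}$ under the inclusion $\A{n}\subseteq\A{n+1}$ (compatibility of the embedding, Lemma~\ref{Special_cases lem embedding functor}), and the induction hypothesis together with the case $n+1\notin K$ of the definition gives $\p{(K,J)}=\m{K,J}{n}=\m{K,J}{n+1}$.

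The content is in the two remaining cases, where $n+1\in K$; write $\ell=|K_{\max}|$ for the top interval of $K$. When $n+1\in K\ohne J$ I must show
\[
\p{(K,J)}=\p{K_{\max}}\,\p{(K',J)}\quad\text{with}\quad K'=K\ohne K_{\max}\cup(K_{\max}-1),
\]
and when $n+1\in J$ I must show
\[
\p{(K,J)}=\p{K_{\max}}\,\p{(K'',J\ohne J_{\max})}\quad\text{with}\quad K''=K\ohne K_{\max}\cup((K_{\max}\ohne J_{\max})-1);
\]
in the latter the preceding remark guarantees $J_{\max}\subseteq K_{\max}$, so the removed block sits inside the top interval. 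Granting these factorisations, the second factor has all its sets inside $\ito{n}$, so the induction hypothesis rewrites it as the corresponding $\m{\,\cdot\,,\cdot}{n}$, and the right-hand sides become precisely $\m{K,J}{n+1}$ by the matching case of the definition.

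Both factorisations are pure commutation identities. The prefix $\p{k_1\ru j_1\konk\ldots\konk k_{r-\ell}\ru j_{r-\ell}}$ only involves generators of index at most $k_{r-\ell}\leq n-\ell$, while $\p{K_{\max}}$ involves only $\p{n-\ell+2},\ldots,\p{n+1}$; as these ranges differ by at least $2$, the commutativity relation of Lemma~\ref{Normalform-lem-generalized relations} lets $\p{K_{\max}}$ pass the prefix and reduces each claim to an identity supported on the top interval. This I would settle by an inner induction on $\ell$: the largest generator $\p{n+1}$ occurs exactly once on each side, commutes with every $\p{m}$ for $m\leq n-1$, and thus slides to the front of the unique block still carrying the letter $n$, reconstituting $\p{(n+1)\ru j_r}$ and peeling off a top interval of length $\ell-1$. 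In the case $n+1\in J$ the top $|J_{\max}|$ blocks degenerate to single generators because there $k_i=j_i$, and the inclusion $J_{\max}\subseteq K_{\max}$ is exactly what makes the shift $K_{\max}\ohne J_{\max}\mapsto(K_{\max}\ohne J_{\max})-1$ in $K''$ match the removal of $J_{\max}$ from $J$. The main obstacle is therefore not a single hard relation but the uniform bookkeeping of these commutations across the four cases, above all keeping track in the case $n+1\in J$ of which letters of the two top intervals survive; this is the same mechanism already executed in the proof of the chain description, so one may alternatively realise the recursive peeling of $K_{\max}$ as a saturated $\vgrn{n+1}$-chain from $J$ to $K$ and invoke Lemma~\ref{Idempotente Lemma alternative Beschreibung der mnJK} directly.
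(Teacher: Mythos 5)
Your proposal is correct and follows exactly the route the paper indicates: the paper dismisses this lemma with ``an induction on $n$ and a case by case analysis according to the definition of $\m{K,J}{n}$'', and your argument is precisely that induction, with the two non-trivial cases ($n+1\in K\ohne N$, resp.\ $n+1\in J$) reduced to factorisations $\p{(K,J)}=\p{K_{\max}}\p{(K',J)}$ that you correctly justify via the commutation identities of Lemma \ref{Normalform-lem-generalized relations} (the gap of at least $2$ between the prefix letters and $K_{\max}$, and the peeling of $\p{n+1}$, both check out). Your closing alternative via a saturated $\vgrn{n+1}$-chain and Lemma \ref{Idempotente Lemma alternative Beschreibung der mnJK} would still need some commutation work to extract $\p{K_{\max}}$ from $\p{K}$, but since it is offered only as a side remark, the main proof stands as written.
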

Before we use the inductive description of $\p{(K,J)}$ we need to introduce two more notation. With them we can formulate an inductive description of $\g{J}{n}$ and hence of $\f{J}{n}$ (see Lemma \ref{Dynkin lemma inductive gJn}).  
\begin{df}
For a subset $N$ of $\ito{n}$ we define the element $\yJ{N}$ in $\A{n}$ by
\[
 \yJ{N} = \begin{cases}
 0 \quad &\text{if } N = \emptyset\\
\yJ{N\ohne\max N} -\p{\max N}\yJ{N\ohne \max N} + \p{\max N}\quad &\text{if } N\neq \emptyset           
          \end{cases}
\]
\end{df}
So $\yJ{N}$ is similarly defined to $\y{1}{n}$ and has similar properties (w.r.t. to the subalgebra $\A{N}$ of $\A{n}$ generated by $\p{s}$ with $s\in N$), namely: the element $\yJ{N}$ is central in $\A{N}$. Moreover, for all  $m\in N$ and each $x>\max N$ we have the identities $\yJ{N}\p{m}  = \p{m}$ and $\yJ{N}\p{x}\p{m} = \p{x}\p{m}$ and consequently $\yJ{N}\neq 0$.
We consider these elements for the sets:
\begin{df}
 We define for each subset $K$ of $\ito{n}$ the subset $\N{K}{n}$ of $\ito{n}$ by:
\[
\N{K}{n} \ldef \begin{cases} 
		\emptyset \quad &\text{if } K=\emptyset\\  
		\set{x\in \ito{n}\ohne K}{x > \min K  } \quad&\text{if } K\neq \emptyset
              \end{cases}
\]
\end{df}
Some examples are: $\N{\{1,2\}}{5}=\{3,4,5\} = \N{\{2\}}{5}, \N{\{1,3\}}{5}=\{2,4,5\}, \N{1,3,4,5}{5}=\{2\}$ and $\N{\{4,5\}}{5}=\emptyset$.
\begin{lem}\label{Dynkin lemma pJy(NJ)pJ annihilated by z}
For each subset $J$ of $\ito{n}$ the linear combination $\p{J} \yJ{\N{J}{n}} \p{J}$ of monomials in $\A{n}$ lies in the ideal $\Jk{|J|+1}{n}$ and is thus annihilated by $\z{|J|}{n}$ (see Corollary \ref{Dynkin Cor Ideal generators}).
\end{lem}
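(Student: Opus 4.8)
The plan is to prove the membership $\p{J}\yJ{\N{J}{n}}\p{J}\in\Jk{|J|+1}{n}$; the asserted annihilation by $\z{|J|}{n}$ is then immediate from Corollary~\ref{Dynkin Cor Ideal generators}, which records that $\z{|J|}{n}$ kills $\Jk{|J|+1}{n}$. First I would expand $\yJ{\N{J}{n}}$ into a linear combination of monomials $\p{w}$. A short induction on $|\N{J}{n}|$ along the recursive definition of $\yJ{N}$ shows that every monomial occurring with nonzero coefficient has support $\emptyset\neq\{w\}\subseteq\N{J}{n}$; in particular $\{w\}$ is nonempty and, since $\N{J}{n}\cap J=\emptyset$ by definition, disjoint from $J$. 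As $\Jk{|J|+1}{n}$ is an $R$-subspace, it then suffices to show $\p{J}\p{w}\p{J}\in\Jk{|J|+1}{n}$ for each such word $w$ (the case $J=\emptyset$, where $\yJ{\N{J}{n}}=0$, being trivial).

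Each $\p{J}\p{w}\p{J}$ is a right multiple of the idempotent $\p{J}$ and so lies in $\Jk{|J|}{n}$ already; the real point is to gain one vertex of support. To this end set $s\ldef\min\{w\}$ and look at the idempotent $\p{J\cup\{s\}}$, one of the generators of $\Jk{|J|+1}{n}$ since $s\notin J$ forces $|J\cup\{s\}|=|J|+1$. I claim $\p{J}\p{w}\p{J}$ is fixed on the right by every generator $\p{t}$ with $t\in J\cup\{s\}$: for $t\in J$ this holds because $\p{J}\p{t}=\p{J}$, and the remaining case $t=s$ is the absorption identity $\p{J}\p{w}\p{J}\p{s}=\p{J}\p{w}\p{J}$ discussed below. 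Composing these stabilisations gives $\p{J}\p{w}\p{J}=\p{J}\p{w}\p{J}\konk\p{J\cup\{s\}}$, exhibiting $\p{J}\p{w}\p{J}$ as a right multiple of a size-$(|J|+1)$ generator, hence an element of the right ideal $\Jk{|J|+1}{n}$, as desired.

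The heart of the matter, and the step I expect to be the main obstacle, is the absorption identity $\p{J}\p{w}\p{J}\p{s}=\p{J}\p{w}\p{J}$ with $s=\min\{w\}$. Following the paper's recurring device I would prove it by comparing actions on the injective indecomposables $I_0,\dots,I_n$, which detect equality of monomials by the proof of Proposition~\ref{Special cases  prop normal form of B_n lin.oriented}. Writing $g\ldef\p{J}\p{w}\p{J}$ and recalling $\p{t}I_x=I_x$ for $x\neq t$ and $\p{t}I_t=I_{t-1}$, the elements $g\p{s}$ and $g$ agree on every $I_x$ with $x\neq s$, so the identity reduces to the single equation $gI_s=gI_{s-1}$. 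To verify it I use that every letter of $w$ is $\geq s$ and that $s$ itself occurs in $w$. Then $\p{J}I_s=I_s$ (as $s\notin J$), while $\p{w}$ sends $I_s$ to $I_{s-1}$ and fixes $I_{s-1}$; setting $I_c\ldef\p{J}I_{s-1}$ one gets $gI_s=\p{J}I_{s-1}=I_c$. On the other hand $c\leq s-1$, so $\p{w}$ fixes $I_c$ (it contains no letter $c$), and by idempotency of $\p{J}$ one obtains $gI_{s-1}=\p{J}\p{w}I_c=\p{J}I_c=\p{J}\p{J}I_{s-1}=\p{J}I_{s-1}=I_c$. Thus $gI_s=I_c=gI_{s-1}$, the absorption identity follows, and with it the membership $\p{J}\yJ{\N{J}{n}}\p{J}\in\Jk{|J|+1}{n}$ and the lemma.
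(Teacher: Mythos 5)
Your proof is correct, and it takes a genuinely different route from the paper. The paper reduces the claim via Corollary \ref{Dynkin Cor Ideal generators} to the fixed-point identity $\y{|J|+1}{n}\p{J}\yJ{\N{J}{n}}\p{J}=\p{J}\yJ{\N{J}{n}}\p{J}$ and establishes that by an induction on $n$ with a case analysis on $|J|$ and on whether $n+1\in J$, staying entirely inside the $\y{k}{n}$-calculus. You bypass $\y{|J|+1}{n}$ altogether: you expand $\yJ{\N{J}{n}}$ into monomials $\p{w}$ with $\emptyset\neq\{w\}\subseteq\N{J}{n}$ and show, monomial by monomial, that $\p{J}\p{w}\p{J}$ absorbs $\p{s}$ on the right for $s=\min\{w\}$, hence equals its own right multiple by the generator $\p{J\cup\{s\}}$ of $\Jk{|J|+1}{n}$. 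The absorption identity is verified by the paper's own recurring device of comparing actions on the injectives $I_0,\dots,I_n$ (faithful by the proof of Proposition \ref{Special cases  prop normal form of B_n lin.oriented}), and your bookkeeping there — $\p{w}I_s=I_{s-1}$, $\p{w}$ fixing $I_c$ for $c\le s-1$, idempotency of $\p{J}$ — checks out, as does the deduction $g=g\p{J\cup\{s\}}$ from $g\p{t}=g$ for all $t\in J\cup\{s\}$. Your argument is more explicit and actually proves the stronger, summand-wise statement, isolating the combinatorial mechanism (the support gains the vertex $\min\{w\}$); the paper's version buys uniformity with the surrounding $\y{k}{n}$/$\Jk{k}{n}$ machinery and is phrased purely in the defining relations, though, as the paper itself notes, your use of the injective action can be replaced by comparing non-decreasing parking functions, so nothing essential is lost over a general commutative ring.
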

\begin{proof}
By Corollary \ref{Dynkin Cor Ideal generators} it suffices to show the identity:
\[
\y{|J|+1}{n}\p{J}\,\yJ{\N{J}{n}}\p{J} = \p{J}\,\yJ{\N{J}{n}}\p{J}
\]
This is a straightforward induction on $n$ requiring a case-by-case analysis on the cardinality of $J$ and considering the cases $n+1\in J$ and $n+1\notin J$ separately. One also needs the identity $\y{j}{n}\p{J}  = \p{J}$ (see Lemma  \ref{Idempotente Lemma Eigenschaften der y}).
\end{proof}
The next rather technical lemma, which we prove in more detail, is the heart of the proof of Theorem \ref{Dynkin theorem aJI neq 0}.
Recall that if $W_n$ as in Proposition \ref{Special cases  prop normal form of B_n lin.oriented}, then $\set{\p{w}}{w\in W_n}$ is a basis of $\A{n}$.
\begin{lem}\label{Idempotente Lemma mJK yNK hat nicht mJK}
Let $K\grn{n} J$. Then we have:
\[
\yJ{\N{K}{n}}\,\p{(K,J)}\in \biggl\langle \p{w}\in \A{n} \:\big|\:\: w \in W_n \text{ and }  \p{v}\neq \m{K,J}{n}\biggr\rangle_{R}\rdef \U{K,J}{n}
\]
\end{lem}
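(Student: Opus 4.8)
The plan is to turn the statement into a single coefficient computation in the basis $\set{\p{w}}{w\in W_n}$ and to decide that computation by letting the monomials act on the injective indecomposables $I_0,\dots,I_n$ (with $I_0=0$). Set $N\ldef \N{K}{n}$; if $N=\emptyset$ (for instance $K=\emptyset$ or $K$ a terminal interval) then $\yJ{N}=0$ and there is nothing to show, so assume $N=\{x_1<\dots<x_m\}$ with $m\geq 1$. First I would rewrite $\yJ{N}$ in closed form: the defining recursion gives $1-\yJ{N}=(1-\p{\max N})(1-\yJ{N\ohne\max N})$, whence inductively $1-\yJ{N}=(1-\p{x_m})(1-\p{x_{m-1}})\cdots(1-\p{x_1})$. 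Multiplying by $\p{(K,J)}=\m{K,J}{n}$ and expanding yields
\[
\yJ{N}\,\p{(K,J)}=\p{(K,J)}-\sum_{S\subseteq N}(-1)^{|S|}\,\p{S^{\downarrow}\konk (K,J)},
\]
where $S^{\downarrow}$ lists the elements of $S$ in decreasing order. Since $\set{\p{w}}{w\in W_n}$ is a basis, the assertion $\yJ{N}\p{(K,J)}\in\U{K,J}{n}$ is equivalent to the vanishing of the coefficient of $\m{K,J}{n}$ on the right, i.e.\ to the identity $\sum_{S\subseteq N,\ \p{S^{\downarrow}}\p{(K,J)}=\p{(K,J)}}(-1)^{|S|}=1$.

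Hence the heart of the matter is to show that, for $S\subseteq N$, one has $\p{S^{\downarrow}}\p{(K,J)}=\p{(K,J)}$ only when $S=\emptyset$; the sum then collapses to its $S=\emptyset$ term and equals $1$. I would argue on injectives, using that by the proof of Proposition \ref{Special cases  prop normal form of B_n lin.oriented} two monomials agree iff they agree on all of $I_0,\dots,I_n$. The key input is the image set: the indices $y$ for which $I_y$ occurs among $\set{\p{(K,J)}I_x}{x\in\oto{n}}$ are exactly $\oto{n}\ohne K$. Granting this, fix $\emptyset\neq S\subseteq N$ and put $s\ldef\max S$; since $N\cap K=\emptyset$ we have $s\notin K$, so $I_s$ lies in the image, say $\p{(K,J)}I_{x_0}=I_s$. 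In $\p{S^{\downarrow}}$ the generator $\p{s}$ is applied last while every other letter of $S$ is strictly smaller than $s$ and hence fixes $I_s$; thus $\p{S^{\downarrow}}I_s=I_{s-1}\neq I_s$, so $\p{S^{\downarrow}}\p{(K,J)}$ and $\p{(K,J)}$ differ on $I_{x_0}$ and are distinct, as required.

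The remaining and, I expect, most delicate point is the image computation, because of the sequential shifts. Writing $K=\{k_1<\dots<k_r\}$ and $J=\{j_1<\dots<j_r\}$ with $k_i\geq j_i$, the monomial acts on indices as $\p{(K,J)}=\phi_1\circ\cdots\circ\phi_r$, where $\phi_i=\p{k_i\ru j_i}$ sends $x$ to $x-1$ on $[j_i,k_i]$ and fixes $x$ elsewhere, so in particular $\im\phi_i=\oto{n}\ohne\{k_i\}$. I would induct on $r$: the case $r=1$ is the displayed image $\oto{n}\ohne\{k_1\}$, and for the step I factor $\p{(K,J)}=\p{(K',J')}\circ\phi_r$ with $K'=\{k_1,\dots,k_{r-1}\}$, $J'=\{j_1,\dots,j_{r-1}\}$, so that $\im\p{(K,J)}=\p{(K',J')}\bigl(\oto{n}\ohne\{k_r\}\bigr)$. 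Because each $\phi_i$ with $i<r$ fixes every index $\geq k_r$ (as $k_r>k_i$) and no $\phi_i$ ever increases an index, $\p{(K',J')}$ fixes $k_r$ and admits $k_r$ as its unique preimage; combined with the inductive image $\oto{n}\ohne K'$ this gives $\im\p{(K,J)}=\oto{n}\ohne K$. The only things to watch are the monotonicity bookkeeping for the $\phi_i$ and keeping the composition order aligned with the convention that in $\p{w}$ the rightmost letter acts first.
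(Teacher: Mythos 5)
Your proof is correct, and it takes a genuinely different route from the paper's. The paper argues by induction on $n$, distinguishing the cases $n+1\notin K$ and $n+1\in K$, and leans on the inductive descriptions $\m{K,J}{n+1}=\p{K_{\max}}\m{\widetilde{K},\widetilde{J}}{n}$ and $\yJ{N}=\yJ{N\ohne\max N}-\p{\max N}\yJ{N\ohne\max N}+\p{\max N}$; the bulk of its work is a case-by-case comparison of actions on injectives inside the step $n+1\in K$. You instead telescope the recursion into the closed product $1-\yJ{N}=(1-\p{x_m})\cdots(1-\p{x_1})$, which reduces the lemma to the single assertion that $\p{S^{\downarrow}}\p{(K,J)}\neq\p{(K,J)}$ for every non-empty $S\subseteq \N{K}{n}$, and you settle that by computing the image of $\p{(K,J)}$ on the injectives to be $\set{I_y}{y\in\oto{n}\ohne K}$ -- a short induction on $|K|$ using that each factor $\p{k_i\ru j_i}$ shifts $[j_i,k_i]$ down by one, never increases an index, and that $k_r=\max K$ is fixed by, and is the unique preimage of itself under, the earlier factors. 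All the delicate points check out: $\N{K}{n}\cap K=\emptyset$ puts $I_{\max S}$ in the image, the remaining letters of $S^{\downarrow}$ fix $I_{\max S}$ because they act first and differ from $\max S$, and the unique-preimage observation is exactly what is needed to pass from $\im\p{k_r\ru j_r}=\oto{n}\ohne\{k_r\}$ to $\im\p{(K,J)}=\oto{n}\ohne K$. Both arguments ultimately distinguish monomials by their action on $I_0,\dots,I_n$, but yours is non-inductive in $n$, self-contained, and yields the sharper explicit identity $\yJ{\N{K}{n}}\p{(K,J)}=-\sum_{\emptyset\neq S\subseteq\N{K}{n}}(-1)^{|S|}\p{S^{\downarrow}}\p{(K,J)}$ with every summand a basis monomial distinct from $\m{K,J}{n}$; what the paper's version buys is uniformity with the surrounding machinery of Section 5, where the same case split and the same inductive description of $\m{K,J}{n}$ are reused elsewhere.
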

\begin{proof}
The proof is an induction on $n$. 

We start with a remark on the two extreme cases $\N{K}{n}=\emptyset$ and $J=K$ with $\N{K}{n}\neq \emptyset$: in the first case $\yJ{\N{K}{n}}=0$ holds, so the statement is clear. In the second case let $\yJ{\N{K}{n}}=\sum_{w\in W_n} c_w \p{w}$. Then we have $\m{K,K}{n}\,\yJ{\N{K}{n}} = \sum_{w\in W_n} c_w \p{K}\p{w}$. Now for all $w\in W_n$ with $c_w\neq 0$ (i.e.\ $\{w\}\subseteq \N{K}{n} \subset \ito{n}\ohne K$) the functors $\m{K,K}{n}=\p{K}$ and $\p{K}\p{w}$ differ in their action on the injective indecomposable $Q_n$-representations $I_i$ with $i\in \ito{n}\ohne K$. Hence $\m{K,K}{n}\,\yJ{\N{K}{n}}$ lies in $\U{K,K}{n}$. 

Since the calculations for $n\in \{1,2,3\}$ are trivial we proceed with the induction step $n\rightarrow n+1$ for $n\geq 3$. Let $K\grn{n+1}J$ such that $J\neq K$ and $\N{K}{n+1}\neq \emptyset$ hold.
\newline\case{$1^{\text{st}}$ case: ${n+1\notin K}$.} Since $\N{K}{n+1}=\N{K}{n}\cup \{n+1\}$ we have
 \[
 \yJ{\N{K}{n+1}}\m{K,J}{n+1} = \biggl(\yJ{\N{K}{n}}-\p{n+1}\yJ{\N{K}{n}}+\p{n+1}\biggr)\m{K,J}{n} = \yJ{\N{K}{n}}\m{K,J}{n}+\p{n+1}(1-\yJ{\N{K}{n}})\m{K,J}{n}
 \]
By the induction hypothesis the first summand $\yJ{\N{K}{n}}\m{K,J}{n}$ lies in $\U{K,J}{n}\subseteq \U{K,J}{n+1}$. Meanwhile each monomial appearing in the second summand starts with $\p{n+1}$, hence does not coincide with $\m{K,J}{n+1} = \m{K,J}{n} \in \A{n}$. (Compare the actions on $I_{n+1}$.) 
\newline\case{$2^{\text{nd}}$ case: ${n+1\in K}$} 
Then $N\ldef \N{K\ohne\{n+1\}}{n}=\N{K}{n+1} \neq \emptyset$ holds and we have:
\[
K\neq K_{\max}\qquad\text{and}\qquad \max N = \min K_{\max} -1 \neq 0
\]
We denote by $\widetilde{N}$ the set $ N\ohne \{\max N\}$. Let $\widetilde{J}$ and $\widetilde{K}$ be those subsets of $\ito{n}$ given by the definition of  $\m{K,J}{n+1}$ (depending on $n+1\in J$ or $n+1\neq J$) such that we have:
\[
\m{K,J}{n+1}=\p{ {K_{\max}}}\m{\widetilde{K},\widetilde{J}}{n}
\]
In the sequel we show: 
\begin{align*}
a &\ldef \p{ {K_{\max}}} \yJ{\widetilde{N}}    \m{\widetilde{K},\widetilde{J}}{n} \in \U{K,J}{n+1}\\[3pt]
b &\ldef \p{\max N}\p{ {K_{\max}}}\left(1- \yJ{\widetilde{N}}\right)  \m{\widetilde{K},\widetilde{J}}{n} \in \U{K,J}{n+1}
\end{align*}	
Then the claim follows immediately because we have
\[
\yJ{\N{K}{n+1}}\m{K,J}{n+1}= \left((1-\p{\max N})\yJ{\widetilde{N}}+\p{\max N}\right) \p{ {K_{\max}}}\m{\widetilde{K},\widetilde{J}}{n}=a+b 
\]
\newline\case{Proof of $b \in \U{K,J}{n+1}$.} Let $ c_w\p{\max N}\p{ {K_{\max}}}\p{w} \neq 0$ be a summand of $b$. Note that $\{w\}$ is a subset of $\ito{n}$. We look at the action on the injective indecomposable $Q_{n+1}$-representation $I_{n+1}$ to distinguish between $\p{\max N}\p{K_{\max}}\p{w}$ and $\m{K,J}{n+1}$:
\begin{align*}
\p{\max N}\p{ {K_{\max}}}\p{w}(I_{n+1}) &= \p{\max N}\p{ {K_{\max}}}(I_{n+1})
 = \p{\max N} (I_{\min K_{\max}-1})
 = I_{\min K_{\max}-2 }\\[3pt]
 &\neq I_{\min K_{\max}-1 } 
 = \p{ {K_{\max}}}(I_{n+1})
  = \p{ {K_{\max}}}\m{\widetilde{K},\widetilde{J}}{n}(I_{n+1})
 =\m{K,J}{n+1}(I_{n+1})
\end{align*}
\case{Proof of $a \in \U{K,J}{n+1}$.} For $\widetilde{N}= \emptyset$ the summand $a$ equals $0$. So we assume $\widetilde{N} \neq \emptyset$. 
Now we consider: 
\[
 s\ldef \begin{cases}
            n \quad &\text{if } n+1\notin J\\
            j\ldef \min J_{\max} -2 \quad &\text{if } n+1\in J\\
        \end{cases}
\]
In each case $\widetilde{J}$ and $\widetilde{K}$ both lie in $\A{s}$ and we have
\begin{align*}
\widetilde{N}= \N{\widetilde{K}}{s}  \qquad\text{and}\qquad  \m{\widetilde{K},\widetilde{J}}{n} =\m{\widetilde{K},\widetilde{J}}{s}\\
\shortintertext{In particular it follows:}
a = \p{ {K_{\max}}} \yJ{\widetilde{N}}\m{\widetilde{K},\widetilde{J}}{n}=\p{ {K_{\max}}} \yJ{\N{\widetilde{K}}{s}}\m{\widetilde{K},\widetilde{J}}{s}
\end{align*}
Now we consider an arbitrary summand $0\neq c_w\p{w} = c_w\p{K_{\max}}\p{v}$ of $a$. By the induction hypothesis $\p{v}$ and $\m{\widetilde{K},\widetilde{J}}{s}$ are distinct monomials in $\A{s}$. Recall that by the proof of Proposition \ref{Special cases  prop normal form of B_n lin.oriented} there thus exist an injective indecomposable $Q_s$-representation $I_j$ with $j\in \ito{s}$ and an index $x \in \oto{\smash{j}}$ with:
\[
 I_x=\p{v}(I_j) \neq \m{\widetilde{K},\widetilde{J}}{s}(I_j)
\]
Let $u\in W_s$ with $\m{\widetilde{K},\widetilde{J}}{s} = \p{\widetilde{K}_{\max}}\p{u}$ and let $y\in \oto{\smash{j}}$ be the index such that we have:
\[
 I_y=\p{u}(I_j)
\]
In particular we get:
\[
 \m{\widetilde{K},\widetilde{J}}{s}(I_j)= \p{\widetilde{K}_{\max}} I_y = \begin{cases}
                                                                          I_y \quad &\text{if } y\notin \widetilde{K}_{\max}\\
                                                                          I_{\min \widetilde{K}_{\max} -1} \quad &\text{if }  y\in \widetilde{K}_{\max}
                                                                         \end{cases}
\]
Since $ n+1 > y\notin \widetilde{K}_{\max}$ implies $y\notin K_{\max}$ we conclude $\m{K,J}{n+1}(I_j)= \m{\widetilde{K},\widetilde{J}}{s}(I_j)$ from:
\[
\m{K,J}{n+1}(I_j) = \p{K_{\max}}\p{{\widetilde{K}_{\max}}}\p{u}(I_j) =\begin{cases} 
                   \p{K_{\max}}I_y \quad &\text{if } y\notin \widetilde{K}_{\max}\\
                   \p{K_{\max}}I_{\min \widetilde{K}_{\max} -1} \quad &\text{if } y\in \widetilde{K}_{\max} 
                                                                         \end{cases}
\]
Meanwhile the action of $\p{w}$ on $I_j$ is:
\[
 \p{w}(I_j) = \p{K_{\max}}\p{v}(I_j) = \p{K_{\max}}(I_x) =\begin{cases}
		  I_x \quad &\text{if } x\notin {K}_{\max}\\
                  I_{\min K_{\max} -1} \quad &\text{if }  x\in K_{\max}
            \end{cases}
\]
We finish the proof with a case-by-case-comparison. 
\newline\case{$x\notin {K}_{\max}$:} Then $\p{w}(I_j)= \p{v}(I_j) \neq \m{\widetilde{K},\widetilde{J}}{s}(I_j) = \m{K,J}{n+1}(I_j)$.
\newline\case{$x\in {K}_{\max}$:} 
If $y\in \widetilde{K}_{\max}$ then $\p{w}$ and $\m{K,J}{n+1}$ act differently on $I_j$ since $\min \widetilde{K}_{\max} < \min K_{\max}$. So we assume $y\notin \widetilde{K}_{\max}$. We have to show, that $y\neq \min K_{\max} -1$. The only case in which $\min K_{\max}-1$ is not contained in $\widetilde{K}_{\max}$ is $n+1\in J$ and $J_{\max} = K_{\max}$. But in that case $y\leq j \leq s= \min J_{\max} -2= \min K_{\max}-2< \min K_{\max}-1 $ holds.
\end{proof}
\subsubsection{An element in $\mathbf{\f{K}{n}\A{n}\f{J}{n}}$, Proof of Theorem \ref{Dynkin theorem aJI neq 0}} 
There is also an alternative description of the idempotents $\g{J}{n}$. A straightforward induction on $n$ now shows: 
\begin{lem} \label{Dynkin lemma inductive gJn}
For each non-empty subset $J$ of $\ito{n}$ we have:
\[
\g{J}{n} = \p{J} -\yJ{\N{J}{n}}\p{J}
\]
In particular $\g{J}{n}$ is contained in the ideal $\Jk{k}{n}$ with $k=|J|$.
\end{lem}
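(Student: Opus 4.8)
The plan is to establish the formula $\g{J}{n} = \p{J} - \yJ{\N{J}{n}}\p{J}$ by induction on $n$, running the three cases in the inductive definition of $\g{J}{n}$ in parallel with the corresponding behaviour of the index set $\N{J}{n}$. The base case $n=1$ (so $J=\{1\}$) is immediate: there $\N{\{1\}}{1}=\emptyset$, hence $\yJ{\N{\{1\}}{1}}=0$ and both sides reduce to $\p{1}=\g{\{1\}}{1}$.

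For the inductive step I would assume the formula at level $n-1$ and split according to the definition of $\g{J}{n}$. If $J=\{n\}$, then $\N{\{n\}}{n}=\emptyset$, so $\yJ{\N{\{n\}}{n}}=0$ and both sides equal $\p{n}$. If $n\notin J$, then since $J\neq\emptyset$ forces $\min J<n$ one has $\N{J}{n}=\N{J}{n-1}\cup\{n\}$ with $\max\N{J}{n}=n$; the defining recursion for $\yJ{\_}$ then gives $\yJ{\N{J}{n}}=\yJ{\N{J}{n-1}}-\p{n}\yJ{\N{J}{n-1}}+\p{n}$, and substituting this together with the induction hypothesis into $\g{J}{n}=(1-\p{n})\g{J}{n-1}$ makes the two expansions agree term by term. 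Finally, if $n\in J\neq\{n\}$, put $J'\ldef J\ohne\{n\}$; the crucial combinatorial facts are $\N{J}{n}=\N{J'}{n-1}$ (because $\ito{n}\ohne J=\ito{n-1}\ohne J'$ and $\min J=\min J'$) and $\p{J}=\p{J'}\p{n}$, so that the induction hypothesis for $J'$, multiplied on the right by $\p{n}$, turns $\g{J}{n}=\g{J'}{n-1}\p{n}$ into exactly $\p{J}-\yJ{\N{J}{n}}\p{J}$.

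For the ``in particular'' clause I would simply rewrite the identity as $\g{J}{n}=(1-\yJ{\N{J}{n}})\p{J}$. Since $\p{J}$ is a monomial indexed by a set of cardinality $k=|J|$, it lies in $\Jk{k}{n}$; as this is an ideal, the left multiple $\yJ{\N{J}{n}}\p{J}$ lies there too, and therefore so does the difference $\g{J}{n}$.

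I expect the one genuinely delicate point to be the bookkeeping of $\N{J}{n}$ in the case $n\in J\neq\{n\}$: one must check that deleting the top element $n$ changes neither $\ito{n}\ohne J$ nor the threshold $\min J$, the latter using that $J\neq\{n\}$ keeps $J'$ non-empty, so that $\N{J}{n}$ equals $\N{J'}{n-1}$ on the nose rather than merely being comparable to it. Once this is in place, each case is a direct expansion using the two defining recursions and the homomorphism property $\p{v\konk w}=\p{v}\p{w}$.
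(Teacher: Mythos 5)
Your proof is correct and matches the paper's intended argument: the paper dispatches this lemma with the single remark that ``a straightforward induction on $n$ now shows'' the identity, and your case split according to the inductive definition of $\g{J}{n}$, together with the bookkeeping $\N{J}{n}=\N{J}{n-1}\cup\{n\}$ (for $n\notin J$) and $\N{J}{n}=\N{J\ohne\{n\}}{n-1}$ (for $n\in J\neq\{n\}$), is exactly that induction carried out in detail. The ``in particular'' clause is also handled correctly, since $\p{J}$ is by definition one of the generators of the ideal $\Jk{|J|}{n}$.
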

\begin{proof}[Proof of Theorem \ref{Dynkin theorem aJI neq 0}.]
Let $K\grn{n} J$ and $k\ldef |K|=|J|$. With the previous lemma, Lemma \ref{Dynkin lemma pJy(NJ)pJ annihilated by z}, we gain the first reduction of the term/sum $\f{K}{n}\p{(K,J)}\f{J}{n}$:
\begin{align*}
\f{K}{n}\p{(K,J)}\f{J}{n} &=\quad \z{k}{n}\g{K}{n} \p{(K,J)} \g{J}{n} \:=\quad \z{k}{n}\g{K}{n}\p{(K,J)}\p{J}\biggl(\p{J}-\yJ{\N{J}{n}}\p{J}\biggr)\: = \quad \z{k}{n}\g{K}{n}\p{(K,J)}\\[3pt] 
\shortintertext{From the Lemma \ref{Idempotente Lemma Eigenschaften der y} and the chain description of $\p{(K,J)}$ we conclude next:}
\f{K}{n}\p{(K,J)}\f{J}{n} &=\quad\y{k}{n}\g{K}{n}\p{(K,J)} - \y{k+1}{n}\g{K}{n}\p{(K,J)} \:\in\quad \g{K}{n}\p{(K,J)} +\Jk{k+1}{n} 
\end{align*}
We finish the proof by looking closer at $\g{K}{n}\p{(K,J)}  =\p{(K,J)}- \yJ{\N{K}{n}}\p{(K,J)}$: on the one hand we have $\p{(K,J)} \notin \Jk{k+1}{n}$ as shown in Lemma \ref{Dynkin lemma pKJ not in Jk k+1}. On the other hand  
$\p{(K,J)}- \yJ{\N{K}{n}}\p{(K,J)}\neq 0$ holds by Lemma \ref{Idempotente Lemma mJK yNK hat nicht mJK}. Thus it follows $\f{K}{n}\p{(K,J)}\f{J}{n}\neq 0$.
\end{proof}
\subsection{Proof of the Main Theorem \ref{Dynkin maintheorem}}
By Theorem \ref{Dynkin complete pw orthogonal idempotents} and \ref{Dynkin theorem aJI neq 0}   $\set{\f{K}{n}\p{(K,J)}\f{J}{n}}{K\grn{n} J}$ is a linearly independent set with exactly $|\Po_n|$ elements. Thus bijectivity follows from Proposition \ref{Special cases  prop normal form of B_n lin.oriented}. To see the multiplicity let $M\grn{n} L$ and $K\grn{n} J$. If $K\neq L$ holds, then $\f{L}{n}$ and $\f{K}{n}$ are orthogonal. Hence $ \Phi(X_{(M,L)})\Phi(X_{(K,J)})=0 = \Phi(X_{(M,L)}X_{(K,J)}) = 0$. If $K=L$ we have the following identities, the fourth identity follows from Lemma \ref{Dynkin lemma pJy(NJ)pJ annihilated by z} and the fifth from the chain description):
\[
 \begin{aligned}
\Phi(X_{(M,L)})\Phi(X_{(K,J)}) &= \f{M}{n}\p{(M,K)}\f{K}{n}\f{K}{n}\p{(K,J)}\f{J}{n}\\
&= \f{M}{n}\p{(M,K)}\g{K}{n}\p{(K,J)}\f{J}{n}\\
&=\f{M}{n}\p{(M,K)}\biggl(\p{K}-\yJ{\N{K}{n}} \biggr)\p{K}\p{(K,J)}\f{J}{n}\\
&=\f{M}{n}\p{(M,K)}\p{K}\p{K}\p{(K,J)}\f{J}{n}\\
&=\f{M}{n}\p{(M,J)}\f{J}{n}\\
&= \Phi(X_{(M,L)}X_{(K,J)})
 \end{aligned}
\]
\qed

%\bibliographystyle{amsalpha}
%\bibliography{Literature.bib}
\providecommand{\bysame}{\leavevmode\hbox to3em{\hrulefill}\thinspace}
\providecommand{\MR}{\relax\ifhmode\unskip\space\fi MR }
% \MRhref is called by the amsart/book/proc definition of \MR.
\providecommand{\MRhref}[2]{%
  \href{http://www.ams.org/mathscinet-getitem?mr=#1}{#2}
}
\providecommand{\href}[2]{#2}

\end{document}